\newtheorem{theorem}{Theorem}[section]
\newtheorem{proposition}[theorem]{Proposition}
\newtheorem{lemma}[theorem]{Lemma}
\newtheorem{corollary}[theorem]{Corollary}
\theoremstyle{definition}
\newtheorem{example}[theorem]{Example}
\theoremstyle{remark}
\newtheorem{remark}[theorem]{Remark}
\numberwithin{equation}{section}
\begin{document}

\title[Existence theorems for ersatz Navier-Stokes type equations]
      {Existence theorems for  
       regular \\ spatially periodic solutions to \\
       ersatz Navier-Stokes equations
			} 
			
\author{A. Shlapunov}

\address[Alexander Shlapunov]
        {Siberian Federal University, 
         Institute of Mathematics and Computer Science, 
         pr. Svobodnyi, 79, 660041 Krasnoyarsk, Russia
}

\email{ashlapunov@sfu-kras.ru}

\subjclass [2010] {Primary 76N10; Secondary 35Q30, 76D05}

\keywords{Navier-Stokes type equations,
          smooth solutions,
          existence theorem}

\begin{abstract}
The initial problem for the Navier-Stokes type equations over
${\mathbb R}^n \times [0,T]$, $n\geq 2$, with a positive  time $T$ in the spatially periodic 
setting is considered. First, we 
prove that the problem induces an open injective continuous 
mapping on scales of specially constructed function spaces of Bo\-chner-Sobolev
type over the $n\,$-dimensional torus ${\mathbb T}^n$. Next,  
rejecting the idea of proving a universal a 
priori estimate for high-order derivatives, we obtain a surjectivity 
criterion for the non-linear mapping under the considerations 
in terms of boundedness for its 
inverse images of precompact sets. Finally, 
we prove that the mapping is  surjective 
if we consider the versions of the Navier-Stokes type 
equations containing no `pressure'{}. 
This gives a uniqueness and existence theorem for regular solutions to this 
particular ersatz of the Navier-Stokes type equations. 
The  used techniques  consist in proving the closedness of the image by estimating all possible divergent sequences in the preimage and matching the asymptotics. 
The  following facts are essential:
i) the torus is a compact closed manifold, ii) the corresponding system is `local'. 
\end{abstract}

\maketitle

\section*{Introduction}
\label{s.0}

The problem\footnote{For $n=3$ some results of this paper 
obtained in the collaboration with  Prof. 
Nikolai Tarkhanov (Instutut f\"ur Mathematik, Universit\"at Potsdam),  
who  passed away in  2020, see \cite{ShlTaArxiv}. }
of describing the dynamics of incompressible viscous fluid is of great importance 
in applications. The dynamics is described by the Navier-Stokes equations and the problem 
consists in finding a sufficiently regular solution to the equations for which a uniqueness 
theorem is available,  cf. \cite{Lady03}. Essential contributions has been published in 
the research articles   \cite{Lera34a,Lera34b},
   \cite{Kolm42},
   \cite{Hopf51},
as well as surveys and books
   \cite{Lady70}),
   \cite{Lion61,Lion69},
   \cite{Tema79},
   \cite{FursVish80},
etc.

After Leray \cite{Lera34a,Lera34b}, a great attention was paid to weak solutions 
to the Navier-Stokes equations 
in cylindrical domains in ${\mathbb R}^3 \times [0,+\infty)$.
E. Hopf \cite{Hopf51} proved the existence of weak solutions to \eqref{eq.NS} satisfying 
reasonable estimates. However, in this full generality no uniqueness theorem for a weak 
solution has been known. On the other hand, under stronger conditions on the solution, it is 
unique, cf. \cite{Lady70,Lady03} by O.A. Ladyzhenskaya who proved the 
existence of smooth solutions for the 
two-dimensional version of problem \eqref{eq.NS}. Some authors (see, e.g., Leray 
\cite{Lera34a,Lera34b} and also a recent paper \cite{Tao16} by T. Tao) 
expressed a lot of skepticism on the existence of regular solutions for all regular data in 
${\mathbb R}^3$. 

We  consider  an initial 
problem for a  slightly more general Navier-Stokes type equations. 
Let $\Delta  = \partial^2_{x_1} + \partial^2_{x_2} + \dots  \partial^2_{x_n}$ be the Laplace 
operator,   $\nabla$ and $\mathrm{div}$ be the gradient operator and the divergence operator, 
respectively, in the Euclidean space ${\mathbb R}^n$, $n\geq 2$. 
Fixing a  zero order bilinear matrix differential operator 
$
M: {\mathbb R}^n \times {\mathbb R}^{n^2} \to 
{\mathbb R}^n 
$ 
with constant coefficients we set for vector fields $u,w$
\begin{equation}\label{eq.non-linearity}
{\mathcal M} (u,w) = M (u,\nabla w) , 
\, {\mathbf D} u = {\mathcal M} (u,u). 
\end{equation}

In the sequel we consider the following initial problem.
Given any sufficiently regular vector-valued functions
   $f = (f^1, f^2, \dots f^n)$ and
   $u_0 = (u_{0}^1, u_{0}^2, \dots u_{0}^n)$
on
   ${\mathbb R}^n \times [0,T]$ and
   ${\mathbb R}^n$,
respectively, find a pair $(u,p)$ of sufficiently regular functions
   $u = (u^1, u^2, \dots u^n)$ and
   $p$
on ${\mathbb R}^n \times [0,T]$ satisfying
\begin{equation}
\label{eq.NS}
\left\{
\begin{array}{rcll}
   \partial _t u     - \mu \Delta u + 
{\mathbf D} u + a \, \nabla p
 & =
 & f,
 & (x,t) \in {\mathbb R}^n \times (0,T),
\\
a\,    \mbox{div}\, u
 & =
 & 0,
 & (x,t) \in {\mathbb R}^n \times (0,T),
\\[.05cm]
   u
& =
& u_0,
& (x,t) \in \mathbb{R}^n \times \{ 0 \}
\end{array}
\right.
\end{equation}
with fixed real numbers $T>0$, $\mu>0$  and a parameter 
$a$ that can take the values $a=0$ and $a=1$.  
We additionally assume that the data $f$ and $u_0$ are spatially periodic with a period 
$\ell > 0$, i.e.,  for any $1 \leq j \leq n$ we have
\begin{equation*}
   f (x+\ell e_j,t)
  = f (x,t),
\,\, 
   u_0 (x+\ell e_j)
  =  u_0 (x)
\end{equation*}
whenever $x \in {\mathbb R}^n$ and $t \in [0,T]$,
    where $e_j$ is as usual the $j\,$-th unit basis vector in ${\mathbb R}^n$.
Then, the solution $(u,p)$ is also looked for in the space of spatially periodic functions 
with period $\ell$ on ${\mathbb R}^n \times [0,T]$.

\begin{example} \label{ex.NS.intr}
If we choose $a=1$ and ${\mathbf D} (u) =  (u \cdot \nabla) u $
then relations \eqref{eq.NS} are usually referred to as but 
the Navier-Stokes equations for incompressible fluid  with given
   dynamical viscosity 	$\mu$ of the fluid, 
   density vector of outer forces $f$, 
   the initial velocity $u_0$
and the search-for velocity vector field $u$ and the pressure $p$ of the flow, see for instance
   \cite{Tema79}
for the classical setting or
   \cite{Serr59b},
   \cite{Tema95}
for the periodic setting.
\end{example}

\begin{example} \label{ex.SvPl.intr}
If $\mu=1$, $a=0$, $b\in (0,1)$ is a real parameter and 
\begin{equation} \label{eq.SvPl}
{\mathbf D} u
 = b \, (u \cdot \nabla) u + 
\frac{1}{2} (1-b) \, \nabla |u|^2 + 
\frac{1}{2} (\mathrm{div } u) u 
\end{equation}
 then \eqref{eq.NS} becomes the   
non-linear problem in ${\mathbb R}^n \times [0,T)$ considered in paper
\cite{PlSv03} by P. Plech\'a$\rm \check{c}$  and 
V. $\rm \check{S}$ver\'ak  without the 
periodicity assumptions on data and solutions. Actually, 
paper \cite{PlSv03}  provides an instructive  example of a non-linear problem in 
${\mathbb R}^n \times [0,T)$, structurally similar to the Cauchy problem for the 
Navier-Stokes equations and `having the same energy estimate'{}, but,  according to some 
motivations including  numerical simulations, admitting singular 
solutions of special type for smooth data if $n\geq 5$. 
Namely, they consider  the  `radial vector fields' 
\begin{equation} \label{eq.radial.u}
u= - v(r,t) x,
\end{equation}
with functions $v$ of variables $t$ and $r=|x|$. Under the hypothesis of this example 
the fields are solutions to 
\eqref{eq.NS} for $f=0$ and $u_0 = - v(r,0) x$ if 
\begin{equation} \label{eq.radial.eq}
v'_t = v''_{rr} + \frac{n+1}{r} v'_r +  (n+2) v^2 +3r v v' _r . 
\end{equation}
Next,  for $v$ satisfying \eqref{eq.radial.eq} they consider  the self-similar solutions 
\begin{equation} \label{eq.self-sim.v}
v (r,t) = \frac{1}{2\varkappa (T-t)} w \Big( \frac{r}{\sqrt{2\varkappa (T-t)}} \Big)
\end{equation}
with  functions $w (y)$ binded by the following relations, see \cite[(1.9)-(1.11)]{PlSv03}: 
\begin{equation} \label{eq.self-sim}
w'' +  \frac{n+1}{y} w' -  \varkappa \, y \, w' + 
(n+2) w^2 +3y w w'  
 - \textcolor{red}{2} \varkappa \, w =0, \, y\in (0,+\infty),
\end{equation}
\begin{equation} \label{eq.self-sim.Cauchy}
w(0) = \gamma\geq 0, \, w'(0) = 0 ,\, 
w(y) = y^{-2} \mbox{ as } y \to + \infty, 
\end{equation}
with a positive parameter $\varkappa$ (here, in comparison with \cite[(1.9)]{PlSv03}, the 
missed multiplier \textcolor{red}{$2$} in the last term of \eqref{eq.self-sim} is recovered).
Based on some analysis of solutions to the steady equation related to 
\eqref{eq.radial.eq} and  numerical simulations, they made conclusion that  
for $n>4$ self-similar solutions  \eqref{eq.self-sim.v}  may produce singular 
solutions in finite time to  this particu\-lar version of \eqref{eq.NS} 
for regular data via formula \eqref{eq.radial.u} if $\gamma>0$. 
We do not know  to what extent our recovery of the missing multiplier affects the analysis
and the numerical simulation made in \cite{PlSv03};
however it might be, the numerical simulations can not be arguments in analysis.  
Actually, taking into account the type of the  obtained below 
implicit estimate \eqref{eq.Gron.large} for solutions to \eqref{eq.NS} with non-linearity 
\eqref{eq.SvPl} and Remark \ref{r.implicit}, questions arise on  the credibility of the 
numerical simulations involving regular/smooth solutions to these  equations. 
In any case,  the  solutions of the discussed type  do not 
fit the periodic pattern of our paper. Indeed, 
the periodicity assumption on the related  data means that 
$$
(x+\ell e_j)w (|x+\ell e_j|) = x w(|x|) 
\mbox{ for all } x \in {\mathbb R}^n,  1\leq j\leq n,
$$  
and therefore 
$
w (|x+\ell e_j|) = w(|x|) =0 
$
for all $x \in {\mathbb R}^n$ and $1\leq j\leq n$. We note also that there are 
obstacles for the existence of blow-up non-periodic solutions to \eqref{eq.NS}  
with smooth data vanishing on the boundary of a bounded domain in ${\mathbb R}^n$  
(in particular, with smooth compactly supported data),  
related to the radial vector fields. 
This follows from  \cite[Lemma 2.1]{PlSv03} because according to it 
solutions to \eqref{eq.self-sim}, \eqref{eq.self-sim.Cauchy} 
are positive on $(0,+\infty)$ if $\gamma>0$. 
\end{example} 

Of course, for $n=2$ initial problem \eqref{eq.NS} can be successfully treated 
with classical energy type estimates, see, for instance, \cite{Tema95}. 
The aim of this paper is to shade some light on  existence and uniqueness 
theorems for smooth solutions related to smooth data for 
equations \eqref{eq.NS} in the spatially periodic setting  in ${\mathbb R}^n\times (0,T)$, 
$n\geq 3$, corresponding to different values of the parameter $a$ and 
a non-linearity \eqref{eq.non-linearity} 
including \eqref{eq.SvPl}. Instead of using technique 
of weak solutions to non-linear equations, 
we will employ proper linearisations and  the Implicit Function Theorem for Banach spaces.
Our approach is based on the following rather standard observations.
  
Due to  \cite{Pro59},
   \cite{Serr62},
   \cite{Lady70} and
   \cite{Lion61,Lion69},
it is known that  the uniqueness  and improvement of regularity 
for a weak solution to the classical Navier-Stokes equations actually follow if 
the solutions belong to the Bochner class 
$L^\mathfrak{s} ([0,T], L^\mathfrak{r} ({\mathbb R^n}))$  with
\begin{equation}\label{eq.s.r} 
2/\mathfrak{s} + n/\mathfrak{r} = 1 \mbox{ and }   \mathfrak{s} \geq 2, 
  \mathfrak{r} > n
	\end{equation}
(the limit case $  \mathfrak{r}  =n= 3$ was added to the list in \cite{ESS03}).

Next, we note that the existence of regular solutions  to the classical 
Navier-Stokes equations for sufficiently small data in different 
spaces  is known since J.~Leray. In addition to these results, 
O.A.~Ladyzhenskaya discovered  the so-called stability property for the Navier-Stokes 
equations in some Bochner type spaces (see 
\cite[Ch.~4, \S~4, Theorems 10 and 11]{Lady70}). Namely, if for sufficiently regular data 
$(f,u_0)$  there is a sufficiently regular solution $(u,p)$ to the Navier-Stokes equations, 
then there is a neighbourhood of the data in which all the elements admit solutions with the 
same regularity, cf. also \cite{ShlTa21} for Bochner-Sobolev type 
spaces over ${\mathbb R}^n \times (0,T)$ without periodicity assumptions or \cite{Po22} 
for the subject in the context of general elliptic differential complexes.

Thus, according to these observations, 

1) We treat the Navier-Stokes type equations within the framework of the theory of operator 
equations associating them with a continuous \textit{non-linear} mapping  between Banach 
spaces, avoiding weak solutions.  
More precisely, we define two scales $\{B^{s}_1\}_{s \in {\mathbb Z}_+}$,   
$\{B^{s}_2\}_{s \in {\mathbb Z}_+}$  of separable Banach spaces such that 
a) each space of the scale $\{B^{s}_1\}_{s \in {\mathbb Z}_+}$ is continuously 
embedded into the spaces $L^{{\mathfrak s} } ([0,T], L^{{\mathfrak r} } ({\mathbb T}^n))$
for $n\,$-dimensional torus ${\mathbb T}^n$ with any ${\mathfrak s} $, 
${\mathfrak r}$,  satisfying \eqref{eq.s.r}; 
b)  the Navier-Stokes type equations induce non-linear continuous mappings 
${\mathcal A}^s:  B^{s}_1 \to B^{s-1}_2$ for all $s \in {\mathbb N}$; 
c) the components of vector fields belonging to the intersections 
$\cap_{s=1}^\infty B^{s}_1$, $\cap_{s=0}^\infty B^{s}_2$ are 
infinitely differentiable functions on the torus. 

2) 
We use in full the mentioned above stability property for the Navier-Stokes type 
equations discovered by O.~A. Ladyzhenskaya.  Using the Implicit Function Theorem for 
Banach space and the solvability of  the linearised problem related to the Fr\'echet 
derivative of the mappings generated by \eqref{eq.NS}, cf. similar linear problems 
in   \cite{Lady67} or \cite[Ch.~3, \S~1--\S~4]{LadSoUr67}, we extend this property 
to the mappings ${\mathcal A}^s:  B^{s}_1 \to B^{s-1}_2$ with arbitrary 
$s \in {\mathbb N}$, expressing it as open mapping theorem for Navier-Stokes type 
equations \eqref{eq.NS} with a non-linearity of the form \eqref{eq.non-linearity}, 
see Theorem \ref{t.open.NS.short}. 
Then the standard topological arguments immediately imply that a 
non-empty open connected set in a topological vector space coincides 
with the space itself if and only if the set is closed. 

3) In order to prove that the set of data admitting regular 
solutions to the Navier-Stokes type Equations is closed, we do  
not use the Faedo-Galerkin formal 
series replacing them by real approximate solutions to the equations. 
It appears that in the chosen function spaces the closedness of the image is 
equivalent to the boundedness of the sequences in the preimage correspon\-ding to sequences 
converging to an element of the image's closure. Using the integrability as a tool, we 
show that the  above boundedness property can be reduced to  an 
$L^\mathfrak{s} ([0,T], L^\mathfrak{r} ({\mathbb T}^n))$-energy type estimates 
for inverse images of precompact sets under the map ${\mathcal A}_s:  B^{s}_1
 \to B^{s-1}_2$ induced by  \eqref{eq.NS}, see Theorem \ref{t.sr}. 
This echoes the idea of using the properness property
to study nonlinear operator equations, see for instance \cite{Sm65}. 

4) Finally, rejecting the idea of proving a universal a 
priori estimate we obtain an \textit{implicit} exponential  energy type 
estimates for the inverse images of precompact sets 
 in the case where $a= 0$; of course, an $L^\mathfrak{s} ([0,T], L^\mathfrak{r} 
({\mathbb T}^n))$-estimate follows from the exponential one, see Theorem 
\ref{t.exist.Bohcher}. 
Here by a universal estimate we mean a bound for the solutions $(u,p)$ via the data 
$(f,u_0)$ and their derivatives with the functional expressions independent on the entries. 
The  used techniques  consist in proving the closedness of the image by 
estimating all possible divergent sequences in the preimage and matching the asymptotics. 
This gives a uniqueness and existence theorem for regular solutions to this 
particular ersatz of the Navier-Stokes type equations, with non-linearities of type 
\eqref{eq.non-linearity} including \eqref{eq.SvPl}.  
The following facts are essential:
i) the torus is a compact closed manifold, ii) the corresponding system is `local'. 
If  $a=1$ then the arguments fail,  because in this case we have to estimate the 
pseudo-differential term $\mathbf{P} \mathbf{D} u$ with the Leray-Helmholtz projection 
$\mathbf{P} $ instead of the \textit{local} term $\mathbf{D} u$ and the integral operator 
$\mathbf{P}$ does not admit point-wise estimates. 

\section{Preliminaries}
\label{s.prelim}

As usual, we denote by $\mathbb{Z}_{+}$ the set of all nonnegative integers including zero, 
and by $\mathbb{R}^n$ the Euclidean space of dimension $n \geq 2$ with coordinates 
$x = (x^1, \ldots, x^n)$.

The  (discrete) Young inequality will be of frequent use in this paper:
 given any $N = 1, 2, \ldots$, for all positive numbers $a_j$ 
and all numbers $p_j \geq 1$ satisfying $\sum_{j=1}^N 1/p_j = 1$ we have 
\begin{equation}
\label{eq.Young}
   \prod_{j=1}^N a_j \leq \sum_{j=1}^N \frac{a_j^{p_j}}{p_j}.
\end{equation}
	
In the sequel we use systematically the Gronwall lemma  and its generalizations.

\begin{lemma}
\label{l.Perov}
Let   $\mathfrak{A} \geq 0$ be constant and let $\mathfrak{B}$,  $\mathfrak{C}$,  
$\mathfrak{F}$ be nonnegative continuous functions defined on a segment $[a_0,b_0]$ 
and let $\mathfrak{F}$ satisfies the integral inequality
\begin{equation*} 
   \mathfrak{F} (t) \leq \mathfrak{A}  + \int_{a_0}^t (\mathfrak{B} (s) \mathfrak{F} (s) + 
	\mathfrak{C} (s) (\mathfrak{F} (s))^{\gamma_0}) ds
\end{equation*}
for all $t \in [a_0,b_0]$. 
If $0 < \gamma_0 < 1$ then for all $t \in [a_0,b_0]$ we have 
\begin{equation*} 
   \mathfrak{F} (t)
 \leq
   \left( \mathfrak{A}^{1-\gamma_0} e^{ \left( (1-\gamma_0) \int_{a_0}^t  \mathfrak{B} 
	(\tau) d\tau \right)} 
        + (1-\gamma_0) \int_{a_0}^t \mathfrak{C} (\tau) e^{ \left( (1-\gamma_0) \int_\tau^t 
				\mathfrak{B} (s) d s \right) 
				} d\tau  \right)^{\frac{1}{1-\gamma_0}}.
\end{equation*}
If  $\gamma_0 =1$ then for all $t \in [a_0,b_0]$ we have 
\begin{equation*} 
   \mathfrak{F} (t)
 \leq
   \mathfrak{A} e^{ \left( \int_{a_0}^t ( \mathfrak{B} (s) + \mathfrak{C}(s))ds  \right)}.
\end{equation*}
Moreover, if $\gamma_0 >1$ and there is $h\in (0,b_0-a_0]$ such that 
\begin{equation} \label{eq.Gron.add} 
\mathfrak{A} \left((\gamma_0-1) \int_{a_0}^{a_0+h} \mathfrak{C}(\tau) d \tau \right)^
{\frac{1}{\gamma_0-1}}
<\left(e^{\left(\int_{a_0}^{a_0+h} 
(1-\gamma_0)\mathfrak{B}(\tau) d \tau \right)}\right)^{\frac{1}{\gamma_0-1}}
\end{equation}
then for all $t \in [a_0,a_0+h]$ we have
\begin{equation*} 
\mathfrak{F} (t) \leq \mathfrak{A} \left( 
e^{\left(\int_{a_0}^t (1-\gamma_0)\mathfrak{B}(\tau) d \tau \right)} -
\textcolor{red}{\mathfrak{A}^{\gamma_0-1}}(\gamma_0-1) \int_{a_0}^t \mathfrak{C}(\tau) 
e^{\left(\int_\tau^t (1-\gamma_0)\mathfrak{B}(s) d s \right) 
 } d \tau \right)^{\frac{1}{\textcolor{red}{1-\gamma_0}}}.
\end{equation*}
\end{lemma}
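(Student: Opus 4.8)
The plan is to replace the integral inequality by a differential one for a majorant and then treat the resulting Bernoulli-type inequality by the classical substitution that linearises it. Concretely, I would set
\[
\mathfrak{G}(t) = \mathfrak{A} + \int_{a_0}^t(\mathfrak{B}(s)\mathfrak{F}(s) + \mathfrak{C}(s)(\mathfrak{F}(s))^{\gamma_0})\,ds,
\]
so that $\mathfrak{F}(t)\le\mathfrak{G}(t)$, $\mathfrak{G}(a_0)=\mathfrak{A}$, and $\mathfrak{G}$ is differentiable with $\mathfrak{G}'(t)=\mathfrak{B}(t)\mathfrak{F}(t)+\mathfrak{C}(t)(\mathfrak{F}(t))^{\gamma_0}$. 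Since $\mathfrak{B},\mathfrak{C}\ge 0$ and $0\le\mathfrak{F}\le\mathfrak{G}$, monotonicity of $x\mapsto x^{\gamma_0}$ on $[0,\infty)$ gives the differential inequality $\mathfrak{G}'(t)\le\mathfrak{B}(t)\mathfrak{G}(t)+\mathfrak{C}(t)(\mathfrak{G}(t))^{\gamma_0}$ with $\mathfrak{G}(a_0)=\mathfrak{A}$; moreover $\mathfrak{G}'\ge 0$, so $\mathfrak{G}(t)\ge\mathfrak{A}$, and it suffices to bound $\mathfrak{G}$. Throughout I assume $\mathfrak{A}>0$, so that $\mathfrak{G}>0$ and the powers below are legitimate; the borderline $\mathfrak{A}=0$ is recovered by the usual $\mathfrak{A}+\varepsilon$ approximation wherever the stated bound permits passing to the limit $\varepsilon\to 0^+$.

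For $\gamma_0\neq 1$ I would introduce $\mathfrak{H}=\mathfrak{G}^{1-\gamma_0}$. Dividing the differential inequality by $\mathfrak{G}^{\gamma_0}$ yields $\mathfrak{G}^{-\gamma_0}\mathfrak{G}'\le\mathfrak{B}\,\mathfrak{G}^{1-\gamma_0}+\mathfrak{C}$, which is a \emph{linear} inequality in $\mathfrak{H}$ after multiplying by $(1-\gamma_0)$ --- with the crucial caveat that this multiplication preserves the inequality when $0<\gamma_0<1$ but reverses it when $\gamma_0>1$. In the first case one obtains $\mathfrak{H}'\le(1-\gamma_0)\mathfrak{B}\,\mathfrak{H}+(1-\gamma_0)\mathfrak{C}$; using the integrating factor $e^{-(1-\gamma_0)\int_{a_0}^t\mathfrak{B}}$ and integrating from $a_0$ to $t$, then undoing the substitution with the \emph{increasing} map $x\mapsto x^{1/(1-\gamma_0)}$ (since $1-\gamma_0>0$), produces exactly the stated bound. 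The case $\gamma_0=1$ is the classical Gronwall lemma applied directly to $\mathfrak{G}'\le(\mathfrak{B}+\mathfrak{C})\mathfrak{G}$, giving $\mathfrak{G}(t)\le\mathfrak{A}\,e^{\int_{a_0}^t(\mathfrak{B}+\mathfrak{C})}$.

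The delicate case is $\gamma_0>1$. There the substitution yields the \emph{reversed} inequality $\mathfrak{H}'\ge(1-\gamma_0)\mathfrak{B}\,\mathfrak{H}+(1-\gamma_0)\mathfrak{C}$, and the same integrating-factor computation now produces the lower bound
\[
\mathfrak{H}(t)\ge\mathfrak{A}^{1-\gamma_0}e^{(1-\gamma_0)\int_{a_0}^t\mathfrak{B}(\tau)\,d\tau}+(1-\gamma_0)\int_{a_0}^t\mathfrak{C}(\tau)\,e^{(1-\gamma_0)\int_\tau^t\mathfrak{B}(s)\,ds}\,d\tau.
\]
Because $1-\gamma_0<0$, the map $x\mapsto x^{1/(1-\gamma_0)}$ is \emph{decreasing}, so this lower bound for $\mathfrak{H}$ converts into the desired \emph{upper} bound for $\mathfrak{G}=\mathfrak{H}^{1/(1-\gamma_0)}$; factoring $\mathfrak{A}^{1-\gamma_0}$ out of the bracket and writing $1-\gamma_0=-(\gamma_0-1)$ reproduces the formula in the statement, and this is precisely where the corrected factor $\mathfrak{A}^{\gamma_0-1}$ and the exponent $\tfrac{1}{1-\gamma_0}$ appear. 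The main obstacle, and the reason hypothesis \eqref{eq.Gron.add} is imposed, is that raising to a negative power is legitimate only while the bracket stays \emph{strictly positive}: for superlinear growth the majorant may blow up in finite time, so one must check that the bracket, which equals $1$ at $t=a_0$, does not reach zero before $a_0+h$. I would verify that \eqref{eq.Gron.add} is exactly the smallness condition on $\int_{a_0}^{a_0+h}\mathfrak{C}$ relative to $\mathfrak{A}$ and $\int_{a_0}^{a_0+h}\mathfrak{B}$ that forces positivity of the bracket on all of $[a_0,a_0+h]$, after which the inversion is permissible and the bound holds on that subinterval.
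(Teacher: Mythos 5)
Your argument is correct, but note that the paper does not actually prove this lemma: its ``proof'' consists of citations --- the case $\gamma_0=1$ to the classical Gronwall lemma and the case $\gamma_0\ne 1$ to Perov's paper and to the book of Mitrinovi\'c--Pe\v{c}ari\'c--Fink (with two misprints corrected in the latter). What you have written is, in effect, the standard argument behind those references: majorise $\mathfrak{F}$ by the $C^1$ function $\mathfrak{G}$, pass to the Bernoulli differential inequality, linearise via $\mathfrak{H}=\mathfrak{G}^{1-\gamma_0}$, and keep track of the sign reversals when $\gamma_0>1$. The superlinear case is the delicate point and you handle it correctly: the substitution yields a \emph{lower} bound for $\mathfrak{H}$, which converts into an upper bound for $\mathfrak{G}$ only while that lower bound stays strictly positive, and hypothesis \eqref{eq.Gron.add} does guarantee this on all of $[a_0,a_0+h]$ --- to finish the verification you only need the two elementary observations that $e^{(1-\gamma_0)\int_\tau^t\mathfrak{B}}\le 1$ and that $t\mapsto e^{(1-\gamma_0)\int_{a_0}^t\mathfrak{B}}$ is nonincreasing, so the positivity condition is worst at $t=a_0+h$, where it is exactly \eqref{eq.Gron.add} raised to the power $\gamma_0-1$. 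Your computation also independently confirms the two corrections (the factor $\mathfrak{A}^{\gamma_0-1}$ and the exponent $\tfrac{1}{1-\gamma_0}$) that the author flags in red. The only cost of your route relative to the paper's citation is length; what it buys is a self-contained verification, including the borderline $\mathfrak{A}=0$ via the $\mathfrak{A}+\varepsilon$ regularisation, which you dispatch correctly.
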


\begin{proof}
For $\gamma_0=1$ this gives the Gronwall lemma,  see, for instance, 
\cite[Ch. XII, p. 353, formula (1.2)]{MPF91}. 
For $0 < \gamma_0 \ne 1$ it was proved 
by A.I.~Perov \cite{Per59}, see also \cite[Ch. XII, p.~360, 
formulas (9.1$'$)--(9.1$'''$)]{MPF91} 
(for $\gamma_0>1$ in the  last book two 
obvious misprints are corrected in formula (9.1$'''$)). 
\end{proof}

For a measurable set $\sigma$ in ${\mathbb R}^n$ and $p\in [1, +\infty)$, we denote by 
$L^p (\sigma)$ the usual Lebesgue space of functions on $\sigma$.
When topologised under the standard norm $\| \cdot \|_{L^p (\sigma)} $ 
it is complete, i.e., a Banach space.
Of course, for $p = 2$ the norm is generated by the standard inner product
$(\cdot,\cdot)_{L^2 (\sigma)}$ and so $L^2 (\sigma)$ is a Hilbert space. 
As usual, the scale $L^p (\sigma)$ continues to include the case $p = \infty$, too.

If $s = 1, 2, \ldots$, we write $H^{s} ({\mathcal X})$  for the Sobolev space of all functions 
$u \in L^2 ({\mathcal X})$ whose generalised partial derivatives up to order $s$ belong 
to $L^2 ({\mathcal X})$. This is a Hilbert space with the standard inner product 
$ (\cdot ,\cdot)_{H^{s} ({\mathcal X})}$. 
The space $H^{s}_{\mathrm{loc}} ({\mathcal X})$ 
consists of functions belonging to $H^{s} (U)$ for 
each relatively compact domain $U \subset {\mathcal X}$. 

Next, for
   $s = 0, 1, \ldots$ and
   $0 \leq \lambda < 1$,
we denote by $C^{s,\lambda} (\overline{\mathcal{X}})$ 
the so-called H\"{o}lder spaces, see for instance
   \cite[Ch.~1, \S~1]{LadSoUr67},
   \cite[Ch.~1, \S~1]{Lady70}.
The normed spaces $C^{s,\lambda} (\overline{\mathcal{X}})$ with
   $s \in \mathbb{Z}_{+}$ and
   $\lambda \in [0,1)$
are known to be Banach spaces which admit the standard embedding theorems.

We are now ready to define proper spaces of periodic functions on ${\mathbb R}^n$.
For this purpose, fix any $\ell > 0$ and denote by ${\mathcal Q}$ be the cube $(0,\ell)^n$
of side length $\ell$. Suppose $s \in \mathbb Z_{+}$. We denote by $H^s$ 
the space of all functions   $u \in H^{s}_\mathrm{loc} ({\mathbb R}^n)$ 
which satisfy the periodicity condition 
\begin{equation}
\label{eq.period}
   u (x + \ell e_j) = u (x)
\end{equation}
for all   $x \in {\mathbb R}^n$ and    $1 \leq j \leq n$, 
where $e_j$ is the $j\,$-th unit basis vector in ${\mathbb R}^n$. 
The space $H^{s}$ is obviously a Hilbert space endowed with the inner product
$
   (u,v)_{H^{s}} = (u,v)_{H^{s} (\mathcal Q)}
$. 
The functions from $H^{s} $ can be easily characterised by their Fourier series
expansions with respect to the orthogonal system 
   $\{ e^{\sqrt{-1} (k,z) (2 \pi / \ell)} \}_{k \in {\mathbb Z}^n}$ 
in $L^2 ({\mathcal Q})$. 
Indeed, as the system consists of eigenfunctions of the Laplace operator $\Delta $ 
corresponding to eigenvalues 
   $\{ \lambda_k = - (k,k) (2 \pi / \ell)^2 \}_{k \in {\mathbb Z}^n}$,
we see that the above scale of Sobolev spaces 
may be defined for all $s \in {\mathbb R}$ by
\begin{equation}
\label{eq.Hs.per}
   H^{s} 
 = \{       u = \sum_{k \in {\mathbb Z}^n} c_k (u) e^{\sqrt{-1} (k,z) (2 \pi / \ell)} :\,
      |c_0 (u)|^2 + \sum_{k \in {\mathbb Z}^n \atop k \ne 0} (k,k)^{s} |c_k (u)|^2 < \infty
   \}
\end{equation}
where $c_k (u)$ are the Fourier coefficients of $u$ with respect to an orthonormal system of 
eigenfunctions of the Laplace operator in the space $\mathbf{L}^2   $ corresponding to the
 eigenvalues $\lambda_k$. 
Traditionally, $\dot{H}^{s} $ stands for the subspace of $H^{s} $ 
consisting of the elements $u$ with $c_0 (u) = 0$ in \eqref{eq.Hs.per}. 
Actually, this discussion leads us to the identification of the space $H^{s} $ 
with Sobolev functions on the torus ${\mathbb T}^n$, to wit, 
   $H^{s}  \cong H^{s} ({\mathbb T}^n)$, see \cite[\S~2.4]{Agra90} and elsewhere.

We also need efficient tools for obtaining a priori estimates.
Namely, it is the Gagliardo-Nirenberg inequality, see \cite{Nir59} for functions 
on ${\mathbb R}^n$. Its analogue for the torus reads as follows
   (see for instance \cite[\S~2.3]{Tema95}).
For $1 \leq  p \leq \infty$, set
\begin{equation*}
   \| \nabla^j u \|_{L^p ({\mathcal Q})}
 := \max_{|\alpha| = j} \| \partial^\alpha u \|_{L^p ({\mathcal Q})}.
\end{equation*}
Then for any function $u \in L^{q_0}  \cap L^{s_0} $ satisfying
   $\nabla^{j_0} u \in L^{p_0} $  and
   $\nabla^{k_0} u \in L^{r_0} $
it follows that
\begin{equation}
\label{eq.L-G-N}
   \| \nabla^{j_0} u \|_{L^{p_0} (\mathcal{Q})}
 \leq
   c_1\, \| \nabla^{k_0} u \|^{a_0}_{L^{r_0} (\mathcal{Q})} \| u \|^{1-a_0 }_{L^{q_0} (\mathcal{Q})}
 + c_2\, \| u \|_{L^{s_0} (\mathcal{Q})}
\end{equation}
whenever
   $s_0 \geq 1$ and
   $0 \leq a_0  \leq 1$,
where
\begin{equation*}
   \frac{1}{p_0}
 = 
    \frac{j_0}{n} + a_0 \left( \frac{1}{r_0} - \frac{k_0}{n} \right) + (1-a_0)\, 
	\frac{1}{q_0},
\,\, 
   \frac{j_0}{k_0}
  \leq 
   a_0,
\end{equation*}
the constants $c_1$ and $c_2$ depend on $j_0$, $k_0$, $s_0$, $p_0$, $q_0$ and $r_0$ 
but not on $u$.

\begin{remark}
\label{r.Delta.r}
Clearly, in the special case $p = 2$ we may always replace the norm
   $\| \nabla^j u \|_{L^p ({\mathcal Q})}$
with the norm
   $\| (- \Delta )^{j/2} u \|_{L^2   }$.
\end{remark}

Next, for $s \in {\mathbb Z}_+$ and $\lambda \in [0,1)$, denote by $C^{s,\lambda} $ 
the space of all functions on ${\mathbb R}^n$ which belong to 
$C^{s,\lambda} (\overline{\mathcal X})$ for any bounded domain 
${\mathcal X} \subset {\mathbb R}^n$ and satisfy \eqref{eq.period}. 
The space $C^\infty $ of spatially periodic $C^\infty$-functions reduces to the intersection 
of the spaces $C^{s,0} $ over $s \in \mathbb{Z}_0$. 
It is endowed with the Fr\'echet topology given by the family of norms 
   $\{ \| u \|_{C^{s,0} } \}_{s \in {\mathbb Z}_+}$. 
Let ${\mathcal D}'$ stand for the space of distributions on ${\mathbb T}^n$, i.e., 
the space of continuous linear functionals on the Fr\'echet space $C^\infty $ endowed 
with the weak topology. 

We  use the  symbol $\mathbf{L}^p$ for the space of periodic vector fields
$u = (u^1,u^2, \dots u^n)$ on $\mathbb{R}^n$ with components $u_i$ in $L^p$.
The space is endowed with the natural norm.
In a similar way we designate the spaces of periodic vector fields on $\mathbb{R}^n$ whose 
components are of Sobolev or H\"{o}lder class. We thus get
   ${\mathbf H}^{s}$ and    ${\mathbf C}^{s,\lambda}$, respectively. 
By  ${\mathbf C}^\infty$ is meant the space of infinitely smooth 
periodic vector fields on $\mathbb{T}^n$. 

To continue, we recall basic formulas of vector analysis saying that
\begin{equation}
\label{eq.deRham}
   \mathrm{rot}\, \nabla
  =
  0, \,
  \mathrm{div}\, \nabla
  =
  \Delta ,
\,
   \mathrm{div}\, \mathrm{rot}
  =
  0,
\,  
  - \mathrm{rot}\, \mathrm{rot} + \nabla\, \mathrm{div}
  =
  E_n \Delta 
\end{equation}
where  $E_n$ is the unit $(n\times n)$-matrix  and $\mathrm{rot}$ is the usual rotation 
operator for $n=3$ and it is the compatibility operator for the gradient operator $\nabla$ in 
the de Rham complex, see, for instance, \cite{deRh55}.

Furthermore, for an integer $s$, we write $V_s$ for the space divergence-free 
vector fields of class ${\mathbf H}^{s}$ and $V'_s$ for the corresponding dual spaces. 
The designations $H$ and $V$ are usually used for $V_0$ and $V_1$, respectively, 
   see \cite[\S~2.1]{Tema95}.  In order to characterize the space $V_s$ we denote by 
$\mathbb{N}_{2,n}$ the set of all natural numbers that can represented as 
$(k,k) = k_1^2 + k_2^2 + \dots + k_n^2$, where $k = (k_1,k_2, \dots k_n)$ 
is a $n$-tuple of natural numbers. For $m \in \mathbb{N}_{2,n}$, let $S_m$ be the 
finite-dimensional linear span of the system  
$\{ e^{\sqrt{-1} (k,z) (2 \pi / \ell)} \}_{(k,k) = m}$ 
of eigenfunctions of the Laplace operator, and let $\mathbf{S}_m$ be the corresponding space
of vector fields on $\mathbb{R}^n$. 

\begin{lemma}
\label{l.basis.V}
Let $m \in {\mathbb N}_{2,n}$.
There are $\mathbf{L}^2$-orthonormal bases
   $\{ v_{m,j} \}_{j=1}^{J_m}$ and
   $\{ w_{m,k} \}_{k=1}^{K_m}$
in the spaces
   $\mathbf{S}_m \cap \ker (\mathrm{div})$ and
   $\mathbf{S}_m \cap \ker (\mathrm{rot})$,
respectively, such that
$$
\begin{array}{rcccl}
   \mathrm{rot} \circ \mathrm{rot}\, v_{m,j}
 & =
 & -\, \Delta  v_{m,j}
 & =
 & m (2 \pi / \ell)^2 v_{m,j},
\\
   \nabla \circ \mathrm{div}\, w_{m,k}
 & =
 & \Delta  w_{m,k}
 & =
 & -\, m (2 \pi / \ell)^2 w_{m,k}
\end{array}
$$
for all
   $j = 1, \ldots, J_m$ and
   $k = 1, \ldots, K_m$. The system $\{ e_1, e_2, \dots e_n, v_{m,j} \}$ 
	is an orthonormal basis in $V_s$.
\end{lemma}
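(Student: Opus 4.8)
The plan is to collapse the whole statement onto finite-dimensional linear algebra in the eigenspace $\mathbf{S}_m$ by passing to Fourier modes, and then to extract the two eigenvalue chains from the single de Rham identity $-\mathrm{rot}\,\mathrm{rot} + \nabla\,\mathrm{div} = E_n\Delta$ of \eqref{eq.deRham}. Write $\Lambda_m = \{k \in \mathbb{Z}^n : (k,k) = m\}$ and $\varepsilon_k = e^{\sqrt{-1}(k,z)(2\pi/\ell)}$; since $m \in \mathbb{N}_{2,n}$ is a natural number, $\Lambda_m$ is finite, symmetric under $k \mapsto -k$, and does not contain the origin. Every $u \in \mathbf{S}_m$ has the form $u = \sum_{k\in\Lambda_m} c_k\,\varepsilon_k$ with constant coefficient vectors $c_k$ (I work over $\mathbb{C}$, the real case being recovered by taking real and imaginary parts), and a direct differentiation gives, mode by mode,
\[
\mathrm{div}\big(c\,\varepsilon_k\big) = \tfrac{2\pi\sqrt{-1}}{\ell}(k,c)\,\varepsilon_k, \quad \Delta\big(c\,\varepsilon_k\big) = -\tfrac{4\pi^2}{\ell^2}\,m\,c\,\varepsilon_k,
\]
together with the analogous formula for $\mathrm{rot}$, whose kernel on the $k$-th mode is exactly the line $\{c : c \parallel k\}$ (these modes being precisely the gradients $\nabla(\lambda\,\varepsilon_k)$). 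In particular $\mathbf{S}_m \subset \ker\!\big(\Delta + m(2\pi/\ell)^2\big)$, so every field in $\mathbf{S}_m$ is automatically an eigenfield of $-\Delta$ for the eigenvalue $m(2\pi/\ell)^2$.

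First I would establish the mode-wise Helmholtz splitting. For each $k \neq 0$ one has the Hermitian-orthogonal decomposition $\mathbb{C}^n = k^{\perp}\oplus\mathbb{C}k$, so $c\,\varepsilon_k$ is divergence-free iff $c \perp k$ and rotation-free iff $c \parallel k$. Since distinct exponentials are $\mathbf{L}^2(\mathcal{Q})$-orthogonal while the $\mathbf{L}^2$ inner product restricted to a single mode is just the inner product of the coefficient vectors, this produces the $\mathbf{L}^2$-orthogonal direct sum $\mathbf{S}_m = \big(\mathbf{S}_m\cap\ker\mathrm{div}\big)\oplus\big(\mathbf{S}_m\cap\ker\mathrm{rot}\big)$, of dimensions $(n-1)|\Lambda_m|$ and $|\Lambda_m|$. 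Both summands are finite-dimensional, so Gram--Schmidt yields the asserted $\mathbf{L}^2$-orthonormal bases $\{v_{m,j}\}_{j=1}^{J_m}$ and $\{w_{m,k}\}_{k=1}^{K_m}$.

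The two eigenvalue chains are now immediate from \eqref{eq.deRham}, which is the formal crux of the lemma. On the divergence-free summand $\nabla\,\mathrm{div}$ vanishes, hence $\mathrm{rot}\,\mathrm{rot} = -\Delta = m(2\pi/\ell)^2\,E_n$ there, which is the first line applied to each $v_{m,j}$; on the rotation-free summand $\mathrm{rot}\,\mathrm{rot}$ vanishes, hence $\nabla\,\mathrm{div} = \Delta = -m(2\pi/\ell)^2\,E_n$, which is the second line applied to each $w_{m,k}$. Because each basis vector lies in the single eigenspace $\mathbf{S}_m$, these operator identities hold on it verbatim.

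Finally, for the completeness claim I would expand an arbitrary divergence-free $u \in \mathbf{H}^s$ in its Fourier series: the zero mode is an arbitrary constant field, a combination of $e_1,\dots,e_n$, and for each $m \geq 1$ the $\Lambda_m$-block lies in $\mathbf{S}_m\cap\ker\mathrm{div} = \mathrm{span}\{v_{m,j}\}$; as the series converges in $\mathbf{H}^s$, these fields span a dense subspace of $V_s$. Orthogonality follows from \eqref{eq.Hs.per}: modes with different values of $(k,k)$ are $\mathbf{H}^s$-orthogonal, and on a single eigenspace the $\mathbf{H}^s$ form equals $m^s$ times the $\mathbf{L}^2$ form, so the $\mathbf{L}^2$-orthonormal $v_{m,j}$ stay mutually orthogonal in $\mathbf{H}^s$; rescaling each $v_{m,j}$ and each $e_j$ by its $\mathbf{H}^s$-norm turns the complete orthogonal system $\{e_1,\dots,e_n\}\cup\{v_{m,j}\}$ into an orthonormal basis of $V_s$. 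The only genuinely delicate points are interpreting $\mathrm{rot}$ as the de Rham compatibility operator of $\nabla$ for general $n$, so that its kernel on each mode is precisely $\mathbb{C}k$, and keeping the $\mathbf{L}^2$-versus-$\mathbf{H}^s$ normalization straight in the last statement; neither involves any analysis beyond finite-dimensional linear algebra, since $\mathbf{S}_m \subset \mathbf{C}^\infty$.
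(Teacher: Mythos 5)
Your proof is correct and follows essentially the same route as the paper, which simply declares the arguments ``straightforward'' and only records the $\mathbf{H}^s$ inner product; you supply the Fourier-mode Helmholtz splitting, the de Rham identity argument, and the completeness check explicitly. Your closing remark on rescaling by the $\mathbf{H}^s$-norms addresses exactly the normalization point the paper's own proof singles out as the one subtlety.
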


\begin{proof}
The arguments are straightforward. It is worth pointing out that the orthogonality in the  
space $\mathbf{H}^s   $ refers to the inner product 
$$
   (u,v)_s
 = \sum_{i=1}^n \overline{c_{0,i} (v)} c_{0,i} (u) 
 + \sum_{\lambda_j \neq 0} (-\lambda_j)^s\, \overline{c_j (v)} \, c_j (u) ,
$$
where $c_j (u)$ are the Fourier coefficients of $u$ with respect to an orthonormal system of 
eigenfunctions of the Laplace operator in the Lebesgue space 
$\mathbf{L}^2   $ corresponding to the eigenvalues $\lambda_j$ and 
$ c_{0,i} (u) = (u,e_i)_{\mathbf{L}^2} / 
(e_i,e_i)_{\mathbf{L}^2}$. 
\end{proof}

Denote by $\mathbf{P}$ the orthogonal projection of $\mathbf{L}^2 $ onto $V_0$
which is usually referred to as the Helmholtz projection.
By Lemma \ref{l.basis.V}, we get 
\begin{equation*}
   \mathbf{P} = \Pi + \mathrm{rot}^\ast \mathrm{rot}\, \varphi
\end{equation*}
where the operators
   $\Pi$ and
   $\varphi$
are given by
\begin{equation*}
   \Pi u
  =
 c_0 (u), \,
   \varphi u
  =   -\, \sum_{k \in \mathbb{Z}^n \atop k \neq 0}
   \frac{c_k (u)}{(k,k) (2 \pi / \ell)^2}\, e^{\frac{\sqrt{-1} (k,z) 2 \pi}{  \ell}} 
	\mbox{ for } 
	u = \sum_{k \in \mathbb{Z}^n \atop k \neq 0} c_k (u) 
	e^{\frac{\sqrt{-1} (k,z)  2 \pi}{ \ell}}.
\end{equation*}
In particular, 
 $\mathbf{P}$ is actually the orthogonal projection of $\mathbf{H}^s $ onto $V_s$ with 
respect to the unitary structure of $\mathbf{H}^s   $ whenever $s \in {\mathbb Z}_+$.

We will also use the so-called Bochner spaces of functions of $(x,t)$ in the strip
$\mathbb{R}^n \times I$, where $I = [0,T]$.
Namely, if
   $\mathcal B$ is a Banach space (possibly, a space of functions on $\mathbb{R}^n$ and
   $p \geq 1$,
we denote by $L^p (I,{\mathcal B})$ the Banach space of all measurable mappings
   $u : I \to {\mathcal B}$ with finite norm
$$
   \| u \|_{L^p (I,{\mathcal B})}
 := \| \|  u (\cdot,t) \|_{\mathcal B} \|_{L^p (I)},
$$
see for instance \cite[Ch.~III, \S~1]{Tema79}.
In the same line stays the space $C (I,{\mathcal B})$, i.e., it is the Banach space of all
mappings $u : I \to {\mathcal B}$ with finite norm
$$
   \| u \|_{C (I,{\mathcal B})}
 := \sup_{t \in I} \| u (\cdot,t) \|_{\mathcal B}.
$$
We are now in a position to introduce appropriate function spaces for solutions and for the 
data in order to obtain existence theorems for regular solutions to the Navier-Stokes type 
equations \eqref{eq.NS}, cf. similar spaces for non-periodic vector-fields over 
${\mathbb R}^n$ in \cite{ShlTa21} or \cite{Po22} in a more general context of 
elliptic differential complexes.  Namely, for $ k \in {\mathbb Z}_+$, we set 
$$
{\mathcal H}^{(a)}_{k} = \left\{ 
\begin{array}{lll}
\mathbf{H}^{k} & \mbox{ if } & a =0 ,\\
V_{k} & \mbox{ if } & a = 1.
\end{array}
\right.
$$
Then for $s, k \in {\mathbb Z}_+$ we denote by   $B^{k,2s,s}_\mathrm{vel,a} (I)$
the set of all vector fields $u$ in the space 
   $C (I,{\mathcal H}^{(a)}_{k+2s}) \cap L^2 (I,{\mathcal H}^{(a)}_{k+1+2s})$
such that
$$
   \partial_x^\alpha \partial_t^j u
 \in
   C (I,{\mathcal H}^{(a)}_{k+2s-|\alpha|-2j}) \cap L^2 (I,{\mathcal H}^{(a)}_{k+1+2s-|\alpha|-2j})
$$
provided $|\alpha|+2j \leq 2s$.
We endow each space $B^{k,2s,s}_\mathrm{vel,a} (I)$ with the natural norm
$$
   \| u \|_{B^{k,2s,s}_\mathrm{vel,a} (I)}
 :=
   \Big( \sum_{i=0}^k \sum_{|\alpha|+2j \leq 2s}
         \| \partial_x^\alpha \partial_t^j u \|^2_{i,\mu,T}
   \Big)^{1/2}
$$
where 
$
   \displaystyle
   \| u \|_{i,\mu,T}
 = \Big( \| \nabla^i u \|^2_{C (I,\mathbf{L}^2   )}
       + \mu \| \nabla^{i+1} u \|^2_{L^2 (I,\mathbf{L}^2   )}
   \Big)^{1/2}
$
are seminorms on the space $B^{k,2s,s}_\mathrm{vel,a} (I)$, too. 

Similarly, for $s, k \in {\mathbb Z}_+$, we define the space $B^{k,2s,s}_\mathrm{for} (I)$
to consist of all fields $f$ in
$
   C (I,\mathbf{H}^{2s+k}   ) \cap L^2 (I,\mathbf{H}^{2s+k+1}  )
$
with the property that
$
   \partial_x^\alpha \partial _t^j f
 \in
   C (I,\mathbf{H}^{k}   ) \cap L^2 (I,\mathbf{H}^{k+1}   )
$
provided
$
   |\alpha|+2j \leq 2s.
$
If $f \in B^{k,2s,s}_\mathrm{for} (I)$, then actually
$$
   \partial_x^\alpha \partial _t^j f
 \in
   C (I,\mathbf{H}^{k+2(s-j)-|\alpha|}   )
 \cap
   L^2 (I,\mathbf{H}^{k+1+2(s-j)-|\alpha|}   )
$$
for all $\alpha$ and $j$ satisfying $|\alpha|+2j \leq 2s$.
We endow each space $B^{k,2s,s}_\mathrm{for} (I)$ with the natural norm
$$
   \| f \|_{B^{k,2s,s}_\mathrm{for} (I)}
 :=
   \Big( \sum_{i=0}^k \sum_{|\alpha|+2j \leq 2s}
         \| \nabla^i \partial_x^\alpha \partial_t^j f \|^2_{C (I,\mathbf{L}^2({\mathcal Q}))}
       + \| \nabla^{i+1} \partial_x^\alpha \partial_t^j f \|^2_{L^2 (I,\mathbf{L}^2 ({\mathcal Q}))}
   \Big)^{1/2}.
$$
Finally, if  $a =1$, then the space 
$B^{k+1,2s,s}_\mathrm{pre,a}(I)$ for the `pressure'{} $p$ will consist of all functions
   $p \in C (I,\dot{H}^{2s+k+1}   ) \cap L^2 (I,\dot{H}^{2s+k+2}   )$ 
	such that $\nabla p \in B^{k,2s,s}_\mathrm{for} (I)$. 
Obviously, the space does not contain functions depending on $t$ only, 
and this allows us to equip it with the norm
$$
   \| p \|_{B^{k+1,2s,s}_\mathrm{pre,a} (I)}
 = \| \nabla p \|_{B^{k,2s,s}_\mathrm{for} (I)}.
$$
If $a=0$ then we define the space for the `pressure'{} $p$ as 
$B^{k+1,2s,s}_\mathrm{pre,0} (I) = \{ 0\}$. 

It is easy to see that
   $B^{k,2s,s}_\mathrm{vel,a} (I)$,
   $B^{k,2s,s}_\mathrm{for} (I)$ and
   $B^{k+1,2s,s}_\mathrm{pre,a} (I)$
are Banach spaces with the following obvious 
properties.

\begin{lemma}
\label{l.NS.cont.0}
Suppose that
   $s \in \mathbb N$, $k \in {\mathbb Z}_+$. 
The following mappings are continuous:
$$
   \nabla :
  B^{k+1,2(s-1),s-1}_\mathrm{pre,a} (I)
  \to
  B^{k,2(s-1),s-1}_\mathrm{for} (I),
$$
$$
   \partial_j \partial_i :
  B^{k,2s,s}_\mathrm{vel,a} (I)
  \to
  B^{k,2(s-1),s-1}_\mathrm{for} (I), \,
  \partial_j \partial_i  :
  B^{k+2,2(s-1),s-1}_\mathrm{vel,a} (I)
  \to
  B^{k,2(s-1),s-1}_\mathrm{for} (I),
$$
$$
   \partial_t :
  B^{k,2s,s}_\mathrm{vel,a} (I)
  \to
  B^{k,2(s-1),s-1}_\mathrm{for} (I),
\,
 \delta_t :
  B^{k,2s,s}_\mathrm{vel,a} (I)
  \to
  {\mathcal H}^{(a)}_{k+2s},
$$
where $\delta_t (u (\cdot,t)) = u (\cdot,0)$ is the initial value functional 
(or the delta-function in $t$). 
\end{lemma}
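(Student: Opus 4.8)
The plan is to handle each of the listed operators separately, reducing the asserted continuity to two elementary observations. First, each of the norms $\|\cdot\|_{B^{k,2s,s}_\mathrm{vel,a}(I)}$, $\|\cdot\|_{B^{k,2s,s}_\mathrm{for}(I)}$ and $\|\cdot\|_{B^{k+1,2s,s}_\mathrm{pre,a}(I)}$ is, up to equivalence, a finite sum of seminorms of the shape $\|\partial_x^\alpha\partial_t^j(\cdot)\|_{C(I,\cdot)}$ and $\|\partial_x^\alpha\partial_t^j(\cdot)\|_{L^2(I,\cdot)}$ taken over $|\alpha|+2j\le 2s$ and $0\le i\le k$. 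Second, on the periodic scale the maps $\partial_i\partial_j\colon\mathbf H^{m+2}\to\mathbf H^m$, $\nabla\colon\dot H^{m+1}\to\mathbf H^m$ are bounded, and $V_m\hookrightarrow\mathbf H^m$ continuously. Consequently it suffices to dominate, term by term, every seminorm occurring in the target norm by a single seminorm of the source; summing the finitely many resulting inequalities then yields one operator bound. Thus the whole lemma is a matter of bookkeeping of the indices $k,s,|\alpha|,j$, together with the passage from the velocity scale $\mathcal H^{(a)}$ to the scale $\mathbf H$.

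I would dispose of the two trivial cases first. For $a=0$ the space $B^{k+1,2(s-1),s-1}_\mathrm{pre,0}(I)$ equals $\{0\}$ and $\nabla$ is the zero map; for $a=1$ the norm of $B^{k+1,2(s-1),s-1}_\mathrm{pre,1}(I)$ is \emph{defined} to be precisely $\|\nabla p\|_{B^{k,2(s-1),s-1}_\mathrm{for}(I)}$, so $\nabla$ is an isometry onto its image and hence continuous. For the initial value functional $\delta_t$ I would note that $u\in C(I,\mathcal H^{(a)}_{k+2s})$ makes $t\mapsto u(\cdot,t)$ a continuous $\mathcal H^{(a)}_{k+2s}$-valued mapping, whence evaluation at $t=0$ satisfies $\|\delta_t u\|_{\mathcal H^{(a)}_{k+2s}}=\|u(\cdot,0)\|_{\mathcal H^{(a)}_{k+2s}}\le\|u\|_{C(I,\mathcal H^{(a)}_{k+2s})}\le\|u\|_{B^{k,2s,s}_\mathrm{vel,a}(I)}$.

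For $\partial_t$ and for the first version of $\partial_i\partial_j$ I would exploit the refined structure already built into the definition of $B^{k,2s,s}_\mathrm{vel,a}(I)$, namely $\partial_x^\beta\partial_t^j u\in C(I,\mathcal H^{(a)}_{k+2s-|\beta|-2j})\cap L^2(I,\mathcal H^{(a)}_{k+1+2s-|\beta|-2j})$ for $|\beta|+2j\le 2s$. Writing $\partial_x^\alpha\partial_t^j(\partial_t u)=\partial_x^\alpha\partial_t^{j+1}u$ and $\partial_x^\alpha\partial_t^j(\partial_i\partial_j u)=\partial_x^{\alpha+e_i+e_j}\partial_t^j u$, one checks that for $|\alpha|+2j\le 2(s-1)$ the resulting multi-indices still satisfy the admissibility bound $\le 2s$, so these derivatives lie in $C(I,\mathcal H^{(a)}_{k+2((s-1)-j)-|\alpha|})\subseteq C(I,\mathbf H^{k+2((s-1)-j)-|\alpha|})$ and in the corresponding $L^2$ space, which is exactly what membership of $B^{k,2(s-1),s-1}_\mathrm{for}(I)$ demands; the embedding $V_m\hookrightarrow\mathbf H^m$ covers the case $a=1$.

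The only step that genuinely needs care is the second mapping $\partial_i\partial_j\colon B^{k+2,2(s-1),s-1}_\mathrm{vel,a}(I)\to B^{k,2(s-1),s-1}_\mathrm{for}(I)$, and I expect this to be the main obstacle. Here the source admits derivatives only up to total weight $2(s-1)$, so the naive term $\partial_x^{\alpha+e_i+e_j}\partial_t^j u$ of weight $|\alpha|+2j+2$ may exceed the admissible range. I would circumvent this by factoring the operator as $\partial_x^\alpha\partial_t^j(\partial_i\partial_j u)=\partial_i\partial_j\bigl(\partial_x^\alpha\partial_t^j u\bigr)$ and applying the two spatial derivatives \emph{after} the admissible derivative $\partial_x^\alpha\partial_t^j u\in C(I,\mathcal H^{(a)}_{(k+2)+2(s-1)-|\alpha|-2j})$; the bounded map $\partial_i\partial_j\colon\mathbf H^{m+2}\to\mathbf H^m$ then lowers the exponent by two and lands in $C(I,\mathbf H^{k+2((s-1)-j)-|\alpha|})$, matching the target once more. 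Apart from this factoring device, the proof is the routine verification that each summand of the target norm is controlled by the source norm, with no analytic difficulty beyond the index arithmetic.
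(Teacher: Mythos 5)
Your proposal is correct and follows essentially the same route as the paper, which simply states that the lemma ``follows immediately from the definition of the spaces''; your index bookkeeping, the observation that the pressure norm is defined via $\nabla$, and the commutation of $\partial_i\partial_j$ past $\partial_x^\alpha\partial_t^j$ are exactly the verifications the paper leaves implicit.
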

 
\begin{proof} Follows immediately from the definition of the spaces. 
\end{proof} 

\begin{lemma} \label{l.emb.Bochner}
If $s\in \mathbb N$ then the  following embedding are continuous:
$$
B^{k,2s,s}_{\mathrm{vel,a}} (I)  \hookrightarrow  
B^{k+2,2(s-1),s-1}_{\mathrm{vel,a}}(I) ,  \,\,
B^{k,2s,s}_{\mathrm{for}} (I)\hookrightarrow  
B^{k+2,2(s-1),s-1}_{\mathrm{for}} (I) , 
$$
$$
B^{k+1,2s,s}_{\mathrm{pre,a}}(I)  \hookrightarrow  
B^{k+3,2(s-1),s-1}_{\mathrm{pre,a}} (I) .
$$
If, in addition, $k+2s>n/2-1$ then the embeddings
$$
B^{k,2s,s}_{\mathrm{vel,a}} (I)\hookrightarrow  
L^\infty (I, \mathbf{L}^n )  , \,\, 
B^{k,2s,s}_{\mathrm{vel,a}} \hookrightarrow  
L^{\mathfrak{s}} (I, \mathbf{L}^{\mathfrak{r}})
$$
are continuous, too, if $\mathfrak{s}$, $\mathfrak{r}$ satisfy \eqref{eq.s.r}.
\end{lemma}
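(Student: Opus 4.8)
The plan is to dispose of the three ``shift'' embeddings first by pure bookkeeping on derivative orders, and then to derive the two Bochner--Lebesgue embeddings from the Sobolev embedding theorem on the torus together with an interpolation in both the space and the time variables.

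For the shift embeddings the decisive observation is that the top regularity level is preserved: $(k+2)+2(s-1)=k+2s$. I would first record the routine fact that, since $\nabla^i$ is the maximum over all spatial derivatives of order $i$, the norm of $B^{k,2s,s}_{\mathrm{vel,a}}(I)$ is equivalent to
\[
\Big(\sum_{2j\le 2s}\sum_{|\beta|\le k+2s-2j}\|\partial_x^\beta\partial_t^j u\|^2_{C(I,\mathbf L^2)}+\mu\sum_{2j\le 2s}\sum_{|\beta|\le k+1+2s-2j}\|\partial_x^\beta\partial_t^j u\|^2_{L^2(I,\mathbf L^2)}\Big)^{1/2},
\]
and similarly for the target space with $(k,s)$ replaced by $(k+2,s-1)$. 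In the target norm $j$ runs only over $2j\le 2(s-1)$, while the spatial orders allowed for each such $j$ are $|\beta|\le (k+2)+2(s-1)-2j=k+2s-2j$ and $|\beta|\le k+2s+1-2j$, i.e. exactly the orders already present, for that same $j$, in the source norm. Hence every summand of the target norm occurs among the summands of the source norm, and the embedding follows. The ``force'' embedding is proved verbatim, its norm having the same structure, and the ``pressure'' embedding reduces to it through the identity $\|p\|_{B^{\kappa+1,2\sigma,\sigma}_{\mathrm{pre,a}}}=\|\nabla p\|_{B^{\kappa,2\sigma,\sigma}_{\mathrm{for}}}$, applied with $(\kappa,\sigma)=(k,s)$ and $(k+2,s-1)$ (and trivially for $a=0$, where the pressure space is $\{0\}$).

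For the two remaining embeddings write $m=k+2s$ and recall $m>n/2-1$. The endpoints of the family \eqref{eq.s.r} are $(\mathfrak s,\mathfrak r)=(\infty,n)$ and $(\mathfrak s,\mathfrak r)=(2,\infty)$. The first endpoint is precisely the claimed embedding into $L^\infty(I,\mathbf L^n)$: since $m\ge n/2-1$, the Sobolev embedding on the compact manifold $\mathbb T^n$ (equivalently \eqref{eq.L-G-N}) gives $\mathbf H^{m}\hookrightarrow\mathbf L^n$, so that $u\in C(I,\mathcal H^{(a)}_{m})\subset C(I,\mathbf H^{m})\hookrightarrow C(I,\mathbf L^n)\hookrightarrow L^\infty(I,\mathbf L^n)$ continuously. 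The second endpoint uses the other half of the velocity norm: because $m+1>n/2$ we have $\mathbf H^{m+1}\hookrightarrow\mathbf L^\infty$, whence $u\in L^2(I,\mathcal H^{(a)}_{m+1})\subset L^2(I,\mathbf H^{m+1})\hookrightarrow L^2(I,\mathbf L^\infty)$, giving $\|u\|_{L^2(I,\mathbf L^\infty)}\lesssim\|u\|_{B^{k,2s,s}_{\mathrm{vel,a}}(I)}$. Thus both hypotheses $m\ge n/2-1$ and $m+1>n/2$ reduce to the single assumption $k+2s>n/2-1$.

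To cover the intermediate exponents I would interpolate. Parametrising \eqref{eq.s.r} by $\theta\in[0,1)$ via $\mathfrak r=n/\theta$ and $\mathfrak s=2/(1-\theta)$ (so $2/\mathfrak s+n/\mathfrak r=(1-\theta)+\theta=1$, and $\theta\in[0,1)$ corresponds exactly to $\mathfrak s\ge2$, $\mathfrak r>n$), the logarithmic convexity of the Lebesgue norms yields, for a.e. $t$, the pointwise bound $\|u(\cdot,t)\|_{\mathbf L^{\mathfrak r}}\le\|u(\cdot,t)\|_{\mathbf L^n}^{\theta}\,\|u(\cdot,t)\|_{\mathbf L^\infty}^{1-\theta}$. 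Raising to the power $\mathfrak s$, using $(1-\theta)\mathfrak s=2$ and the uniform bound $\|u(\cdot,t)\|_{\mathbf L^n}\le\|u\|_{L^\infty(I,\mathbf L^n)}$ obtained above, one is left with $\int_I\|u(\cdot,t)\|_{\mathbf L^\infty}^2\,dt$, which is controlled by the second endpoint estimate; taking the $\mathfrak s$-th root gives $\|u\|_{L^{\mathfrak s}(I,\mathbf L^{\mathfrak r})}\le C\,\|u\|_{L^\infty(I,\mathbf L^n)}^{\theta}\,\|u\|_{L^2(I,\mathbf L^\infty)}^{1-\theta}\lesssim\|u\|_{B^{k,2s,s}_{\mathrm{vel,a}}(I)}$, as required. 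I expect the main technical point to be this last interpolation step, namely verifying that the Hölder exponents fit the scaling relation \eqref{eq.s.r} precisely and that the two endpoint Sobolev embeddings are governed by the single threshold $k+2s>n/2-1$; the redistribution of derivative orders underlying the shift embeddings, while requiring care, is routine.
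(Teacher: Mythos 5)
Your proposal is correct and follows essentially the same route as the paper, whose proof is only a two-line sketch: the first three embeddings are read off from the definitions of the norms, and the Lebesgue embeddings come from the Sobolev embedding theorem (the paper's \eqref{eq.Sob.index} for the endpoint $\mathfrak{s}=2$, $\mathfrak{r}=\infty$) combined with interpolation, for which the paper cites \eqref{eq.L-G-N} where you use the equivalent H\"older/Lyapunov interpolation between the two endpoints. Your write-up simply supplies the bookkeeping and the exponent check $(1-\theta)\mathfrak{s}=2$ that the paper leaves implicit, and both endpoints are indeed governed by the single threshold $k+2s>n/2-1$.
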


\begin{proof} The continuity of the first three embeddings 
follows immediately from the definition of the spaces. 
The other embeddings follow from \eqref{eq.L-G-N} and 
the Sobolev embedding theorem (see, for instance, \cite[Ch.~4, Theorem~4.12]{Ad03}).
For the fifth embedding with $\mathfrak{s}=2$, $\mathfrak{r}=+\infty$,
 we use the following: if   $k, s \in {\mathbb Z}_+$ and 
   $\lambda\in (0,1)$ satisfying    $k - s - \lambda > n/2$, 
the there exists a constant $c (k,s,\lambda)$ depending on the parameters, such that 
for all $u \in H^{k}   $ we have 
\begin{equation}
\label{eq.Sob.index}
   \| u \|_{C^{s,\lambda}    }
 \leq c (k,s,\lambda)\, \| u \|_{H^{k}   }
\end{equation}
 cf. \cite[Lemma 3.4]{ShlTa21}
for vector-fields over ${\mathbb R}^n$ without periodicity assumptions. 
\end{proof}

We now proceed with studying \eqref{eq.NS} in these spaces.

\section{An open mapping theorem}
\label{s.OM}

This section is devoted to the so-called stability property for solutions to the Navier-Stokes 
type equations \eqref{eq.NS}. One of the first statements of this kind was obtained  by
   O.A.Ladyzhenskaya \cite[Ch.~4, \S~4, Theorem~11]{Lady70} 
for flows in bounded domains in ${\mathbb R}^3$ with $C^2$ smooth boundaries.

In order to extend the property to the spaces of high smoothness, we consider the standard 
linearisation of problem \eqref{eq.NS} at the zero solution $(0,0)$. Namely, let 
$$
   \mathbf{B} (w,u)
 = {\mathcal M} (u,w) + 	{\mathcal M} (w,u)
$$
for vector fields    $u = (u^1, u^2, \dots, u^n)$ and
   $w = (w^1, w^2, \dots w^n)$.  

We proceed with a simple lemma.

\begin{lemma}
\label{l.NS.cont}
Suppose that $s \in \mathbb N$, 
   $k \in {\mathbb Z}_+$, $2s+k>\frac{n}{2}-1$,  and
   $w \in B^{k,2s,s}_\mathrm{vel,a} (I)$. 
The following mappings are continuous:
$$
\begin{array}{rrcl}
     \mathbf{B} (w,\cdot) :
 & B^{k+2,2(s-1),s-1}_\mathrm{vel,a} (I)
 & \to
 & B^{k,2(s-1),s-1}_\mathrm{for} (I),
\\
  \mathbf{B} (w,\cdot) :
 & B^{k,2s,s}_\mathrm{vel,a} (I)
 & \to
 & B^{k,2(s-1),s-1}_\mathrm{for} (I),
\\[.2cm]
   \mathbf{D} :
 & B^{k+2,2(s-1),s-1}_\mathrm{vel,a} (I)
 & \to
 & B^{k,2(s-1),s-1}_\mathrm{for} (I),
\\
   \mathbf{D} :
 & B^{k,2s,s}_\mathrm{vel,a} (I)
 & \to
 & B^{k,2(s-1),s-1}_\mathrm{for} (I).
\end{array}
$$
Moreover, with positive constants $c_{s,k} $ independent on $u,w$, we have 
\begin{equation}\label{eq.B.pos.bound}
\|  \mathbf{B} (w,u)\|_{B^{k,2(s-1),s-1}_\mathrm{for} (I)}
\leq c_{s,k} 
	\|w\|_{B^{k+2,2(s-1),s-1}_\mathrm{vel,a} (I)} 
	\|u\|_{B^{k+2,2(s-1),s-1}_\mathrm{vel,a} (I)}.
\end{equation}
\end{lemma}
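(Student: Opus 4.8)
The plan is to reduce everything to a single bilinear product estimate in the spatial variable and then reassemble it in the Bochner norms. First I would record the algebraic reductions. Since $M$ in \eqref{eq.non-linearity} is a constant-coefficient bilinear zero-order operator, $\mathbf{B}(w,u) = {\mathcal M}(u,w) + {\mathcal M}(w,u)$ is a finite linear combination, with constant coefficients, of scalar products of the form $\phi\,\partial_m\psi$, where $\phi$ and $\psi$ range over the components of $u$ and $w$. Moreover $\mathbf{D}u = {\mathcal M}(u,u) = \tfrac12\mathbf{B}(u,u)$, so the two statements for $\mathbf{D}$ follow from the two for $\mathbf{B}$ by polarisation (writing $\mathbf{D}u - \mathbf{D}u' = \tfrac12\mathbf{B}(u-u',u+u')$ to obtain local Lipschitz continuity). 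Finally, by the embedding $B^{k,2s,s}_{\mathrm{vel,a}}(I)\hookrightarrow B^{k+2,2(s-1),s-1}_{\mathrm{vel,a}}(I)$ of Lemma \ref{l.emb.Bochner} (which also places the fixed field $w$ into the larger space), the second and fourth mappings are the first and third composed with a continuous inclusion. Hence it suffices to prove that the bilinear map $(w,u)\mapsto\mathbf{B}(w,u)$ is bounded from $B^{k+2,2(s-1),s-1}_{\mathrm{vel,a}}(I)\times B^{k+2,2(s-1),s-1}_{\mathrm{vel,a}}(I)$ into $B^{k,2(s-1),s-1}_{\mathrm{for}}(I)$, i.e. to establish \eqref{eq.B.pos.bound}; continuity of a bounded bilinear map is then automatic.

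The analytic core is a pointwise-in-$t$ Sobolev multiplication estimate. For a fixed $t$ and a generic term $\phi\,\partial_m\psi$, I would prove that for every spatial multi-order $\beta$ up to order $k+2s-1$ one has
$$
\|\partial^\beta(\phi\,\partial_m\psi)\|_{\mathbf{L}^2({\mathcal Q})}\leq C\,\|\phi\|_{\mathbf{H}^{\sigma_1}}\,\|\psi\|_{\mathbf{H}^{\sigma_2}},
$$
with $\sigma_1,\sigma_2$ chosen so that $\sigma_1+\sigma_2$ matches the total number of derivatives with a little room to spare. The mechanism is Leibniz's rule followed by the Gagliardo--Nirenberg inequality \eqref{eq.L-G-N} (cf. Remark \ref{r.Delta.r}): in each summand $\partial^\gamma\phi\,\partial^{\beta-\gamma}\partial_m\psi$ I would apply H\"older with a pair of conjugate exponents and then interpolate each intermediate $L^p$ norm between the top-order $L^2$ norm and a low-order norm. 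The key is that the derivatives are distributed between the two factors rather than loaded onto one, so no factor is ever required to lie in $\mathbf{L}^\infty$; the balance is exactly governed by the hypothesis $2s+k>n/2-1$, which is the threshold making $\mathbf{H}^{k+2s}\cdot\mathbf{H}^{k+2s-1}\hookrightarrow\mathbf{H}^{k+2s-2}$ (and its $L^2(I,\cdot)$ companion carrying one extra derivative) a bounded multiplication. This is precisely the regularity under which Lemma \ref{l.emb.Bochner} already supplies the embedding into $L^\infty(I,\mathbf{L}^n)$ and the attendant Gagliardo--Nirenberg interpolation.

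With the spatial estimate in hand, I would assemble the Bochner norm seminorm by seminorm. For the $C(I,\mathbf{L}^2)$-entries of $\|\mathbf{B}(w,u)\|_{B^{k,2(s-1),s-1}_{\mathrm{for}}(I)}$ I would apply $\nabla^i\partial_x^\alpha\partial_t^j$ with $i\leq k$ and $|\alpha|+2j\leq 2(s-1)$, expand by Leibniz in both $x$ and $t$, bound each summand by the spatial estimate pointwise in $t$, and then take $\sup_t$ of both sides, producing a product of $C(I,\mathbf{H}^{\bullet})$-norms. For the $L^2(I,\mathbf{L}^2)$-entries (those carrying the extra $\nabla$, hence one further spatial derivative) I would split in $t$ by H\"older as $L^\infty_t\cdot L^2_t$: the factor carrying the top spatial derivative is measured in $L^2(I,\mathbf{H}^{k+2s+1})$ and its low-order partner in $C(I,\mathbf{H}^{k+2s})$. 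The time-derivative budget causes no difficulty, since distributing $\partial_t^j$ with $j\leq s-1$ gives each factor at most $s-1$ time derivatives, which is exactly what $B^{k+2,2(s-1),s-1}_{\mathrm{vel,a}}(I)$ controls. Summing the finitely many Leibniz terms over the finitely many seminorm entries yields \eqref{eq.B.pos.bound} with a constant $c_{s,k}$ depending only on $s$, $k$ (and on $n$, $\mu$ and the coefficients of $M$), but not on $u$ or $w$.

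The step I expect to be the main obstacle is the bookkeeping in the two preceding paragraphs: verifying, uniformly over all Leibniz terms and all seminorm entries, that the factor assigned the highest-order spatial derivative never exceeds the available regularity ($\mathbf{H}^{k+2s+1}$ in $L^2(I,\cdot)$, $\mathbf{H}^{k+2s}$ in $C(I,\cdot)$), while the complementary factor always retains enough derivatives to be interpolated in the correct intermediate $L^p$ via \eqref{eq.L-G-N}. This is exactly where the dimension hypothesis $2s+k>n/2-1$ is consumed; the remaining work is an application of the discrete Young inequality \eqref{eq.Young} to organise the interpolation exponents and the finiteness of the sums to absorb the combinatorial constants.
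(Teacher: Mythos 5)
Your proposal is correct and follows essentially the same route the paper takes: the paper's own proof is a one-line appeal to ``the very definition of the spaces and the Gagliardo--Nirenberg inequality \eqref{eq.L-G-N}'' (with details deferred to \cite[Lemma 3.5]{ShlTa21} and \cite[Theorem 1.4]{Po22}), and your Leibniz--H\"older--Gagliardo--Nirenberg multiplication estimate, assembled seminorm by seminorm, together with the reductions via $\mathbf{B}(u,u)=2\mathbf{D}u$ and the embedding of Lemma \ref{l.emb.Bochner}, is exactly that argument written out. Your identification of $2s+k>\tfrac{n}{2}-1$ as the threshold for the product estimate ($k+2s+1>\tfrac{n}{2}$) is also the correct accounting.
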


\begin{proof} Follows easily from the very definition of the spaces and Gagliardo-Nirenberg 
inequality \eqref{eq.L-G-N}, cf. \cite[Lemma 3.5]{ShlTa21} for vector-fields 
over ${\mathbb R}^n$ without periodicity assumptions; cf. also \cite[Theorem 1.4]{Po22} 
for spaces on compact closed manifolds.
\end{proof}

Now, let us consider a linearisation of problem \eqref{eq.NS}:
given spatially periodic functions
   $f = (f^1, f^2, \dots f^n) \in B^{k,2(s-1),s-1}_\mathrm{for,a} (I)$,
   $w = (w^1, w^2 , \dots w^n) \in B^{k,2s,s}_\mathrm{vel,a} (I)$
on ${\mathbb R}^n \times [0,T]$ and
   $u_0 = (u^1_{0}, u^2_{0}, \dots u^n_{0}) \in {\mathcal H}^{(a)}_{2s+k} $ 
on ${\mathbb R}^n$ with values in $\mathbb{R}^n$, find spatially 
periodic functions
   $u = (u^1, u^2, \dots u^n) \in B^{k,2s,s}_\mathrm{vel,a} (I)$ and
   $p \in B^{k+1,2s,s}_\mathrm{pre,a} (I)$
in the strip ${\mathbb R}^n \times [0,T]$ which satisfy
\begin{equation}
\label{eq.NS.lin}
\left\{
\begin{array}{rcll}
   \partial _t u   -  \mu \Delta  u 	+ 	{\mathbf B} (w,u)	+ 	a \, \nabla p
 & =
 & f,
 & (x,t) \in {\mathbb R}^n \times (0,T),
\\
  a \, \mbox{div}\, u
 & =
 & 0,
 & (x,t) \in {\mathbb R}^n \times (0,T),
\\[.05cm]
   u
& =
& u_0,
& (x,t) \in \mathbb{R}^n \times \{ 0 \}.
\end{array}
\right.
\end{equation}

Considering this problem in the Bochner spaces yields the expected existence and 
uniqueness theorem. 

\begin{theorem}[\bf A. Shlapunov, N. Tarkhanov]
\label{t.exist.NS.lin.strong}
Let
   $s \in \mathbb N$,
   $k \in {\mathbb Z}_+$, $2s+k>\frac{n}{2}$, 
and
   $w \in B^{k,2s,s}_\mathrm{vel,a} (I)$.
Then \eqref{eq.NS.lin} induces a bijective continuous linear mapping
\begin{equation}
\label{eq.map.Aw}
   \mathcal{A}_{w,a} :
   B^{k,2s,s}_\mathrm{vel,a} (I) \times B^{k+1,2(s-1),s-1}_\mathrm{pre,a} (I)
\to
   B^{k,2(s-1),s-1}_\mathrm{for} (I) \times {\mathcal H}^{(a)}_{2s+k},
\end{equation}
which admits a continuous inverse $\mathcal{A}^{-1}_{w,a}$.
\end{theorem}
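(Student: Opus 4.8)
The plan is to prove the theorem in the two cases $a=0$ and $a=1$ separately, since they differ in whether the pressure and divergence constraints are present. In both cases the strategy is to establish the bijectivity and continuity of $\mathcal{A}_{w,a}$ by combining three ingredients: (i) continuity of the forward map, which is immediate from Lemma~\ref{l.NS.cont.0}, Lemma~\ref{l.NS.cont}, and the definition of the Bochner spaces; (ii) an a~priori energy estimate that gives both uniqueness and the continuous dependence $\|u\|,\|p\|\lesssim \|f\|+\|u_0\|$; and (iii) existence of a solution in the required regularity class. Continuity of the inverse then follows from the open mapping theorem for Banach spaces once bijectivity is established, but I would prefer to obtain it directly from the energy estimate, since that estimate is exactly the bound the later surjectivity arguments will need.

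First I would treat $a=1$. Applying the Helmholtz projection $\mathbf{P}$ to the first equation eliminates the pressure, reducing \eqref{eq.NS.lin} to an abstract parabolic Cauchy problem $\partial_t u+\mu(-\Delta)u+\mathbf{P}\,\mathbf{B}(w,u)=\mathbf{P}f$ on the divergence-free scale $V_\bullet$, with $u(0)=u_0$. Here $-\Delta$ is a positive self-adjoint operator on $V_0$ with discrete spectrum (Lemma~\ref{l.basis.V}), so the linear principal part generates an analytic semigroup, and $\mathbf{P}\,\mathbf{B}(w,\cdot)$ is, by the bound \eqref{eq.B.pos.bound}, a lower-order perturbation controlled by $\|w\|_{B^{k+2,2(s-1),s-1}_{\mathrm{vel},a}(I)}$. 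Then I would recover the pressure: once $u$ is known, $a\,\nabla p=f-\partial_t u+\mu\Delta u-\mathbf{B}(w,u)$ lies in $\mathbf{H}^\bullet$ and is a gradient because applying $\mathbf{P}$ annihilates it, so $p$ is determined up to a $t$-dependent constant, which is fixed by the $\dot H$ normalisation built into $B^{k+1,2s,s}_{\mathrm{pre},a}(I)$. For $a=0$ there is no constraint and no pressure ($B^{k+1,2s,s}_{\mathrm{pre},0}(I)=\{0\}$), so one works directly on the full scale $\mathbf{H}^\bullet$ with the same parabolic machinery; this case is in fact simpler.

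The heart of the matter is the a~priori estimate in the full Bochner norm, obtained by energy/bootstrap arguments. One tests the equation against $(-\Delta)^i u$, integrates over $\mathcal{Q}$ and over $[0,t]$, and uses the Gagliardo-Nirenberg inequality \eqref{eq.L-G-N} together with \eqref{eq.B.pos.bound} to absorb the nonlinear term $\mathbf{B}(w,u)$ into the viscous dissipation, producing exactly the mixed $C(I,\mathbf{L}^2)$--$L^2(I,\mathbf{L}^2)$ seminorms $\|\cdot\|_{i,\mu,T}$ appearing in the definition of $B^{k,2s,s}_{\mathrm{vel},a}(I)$. The Gronwall lemma (Lemma~\ref{l.Perov}, case $\gamma_0=1$) then closes the estimate, with the exponential factor depending on $\|w\|$. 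The time-derivative and higher mixed-derivative estimates follow by differentiating the equation in $t$ and repeating, using the inductive structure of the spaces. The main obstacle is bookkeeping: one must check that every mixed derivative $\partial_x^\alpha\partial_t^j u$ with $|\alpha|+2j\le 2s$ lands in the correct space, which requires the compatibility of initial data with the equation and careful use of the condition $2s+k>n/2$ to control the nonlinearity via Sobolev embedding; the parabolic regularity estimates themselves, once set up on the torus where $-\Delta$ has the clean spectral description of \eqref{eq.Hs.per}, are standard but must be tracked through the whole scale. Since all of this ultimately cites the analogous non-periodic result \cite[Lemma 3.5, and the linear existence theory]{ShlTa21} and the manifold version \cite{Po22}, I would present the torus case by adapting those arguments, the essential simplification being that the torus is a compact closed manifold on which the spectral resolution of $\Delta$ is explicit.
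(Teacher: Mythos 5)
Your proposal is correct and follows essentially the same route as the paper's (itself only sketched) proof: continuity of the forward map from Lemmata \ref{l.NS.cont.0} and \ref{l.NS.cont}, uniqueness and the quantitative bound from the $\|\cdot\|_{i,\mu,T}$ energy estimates closed by the Gagliardo--Nirenberg inequality and the Gronwall-type Lemma \ref{l.Perov} with $\gamma_0\leq 1$ (cf.\ Lemma \ref{p.En.Est.u}), a bootstrap through the mixed-derivative scale for higher regularity, and recovery of the pressure for $a=1$ by applying the Helmholtz projection and inverting $\nabla$ on $\dot H^{\bullet}$ exactly as in Proposition \ref{c.Sob.d}. The only substantive difference is that you obtain the basic existence by analytic-semigroup/Duhamel perturbation of $-\mu\Delta$ on the divergence-free scale, whereas the paper invokes Faedo--Galerkin approximations for the weak solution (Proposition \ref{p.exist.NS.lin.weak}); both are standard for this linear parabolic problem on the torus and the choice does not affect the rest of the argument.
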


\begin{proof}  The continuity of $\mathcal{A}_{w,a}$ follows from Lemmata 
\ref{l.NS.cont.0}, \ref{l.NS.cont}. 
Next, one usually follows a rather standard scheme beginning with the notion of a 
weak solution, 
see, for instance \cite[Ch. VI, \S 5]{Lady70}, \cite[Ch. 3, \S 1]{Tema79} 
or \cite[Theorems 3.1 and 3.2]{ShlTa21} for this particular 
type of spaces in the case of vector fields over ${\mathbb R}^n \times (0,T)$ 
without periodicity assumptions. 
Namely, one usually begins with the following statement. 

\begin{proposition}
\label{p.exist.NS.lin.weak}
Suppose $w \in C (I,{\mathcal H}^{(a)}_0) \cap L^2 (I,{\mathcal H}^{(a)}_{1}) 
\cap L^{2} (I,\mathbf{L}^{\infty}   )\cap L^{\infty} (I,\mathbf{L}^{n}   )$.
Given any pair $(f,u_0) \in L^2 (I,({\mathcal H}^{(a)}_{1})') \times {\mathcal H}^{(a)}_{0}$, there is a unique vector field
   $u \in C (I,{\mathcal H}^{(a)}_{0}) \cap L^2 (I,{\mathcal H}^{(a)}_{1})$ with
   $\partial_t u \in L^2 (I,({\mathcal H}^{(a)}_{1})')$,
satisfying for all $v \in  {\mathcal H}^{(a)}_{1}$
\begin{equation}
\label{eq.NS.lin.weak}
\left\{
   \begin{array}{rcl}
   \displaystyle
   \frac{d}{dt} (u,v)_{\mathbf{L}^2   }
 + \mu  (\nabla u, \nabla v)_{\mathbf{L}^2   }  & =
 & \langle f - \mathbf{B} (w,u), v \rangle,
\\
   u (\cdot,0)
 & =
 & u_0.
   \end{array}
\right.
\end{equation}
\end{proposition}

\begin{proof}
It is similar to the proof of the uniqueness and existence theorem for the Stokes problem and
the Navier-Stokes problem, see
   \cite[\S~2.3, \S~2.4]{Tema95}
(or
   \cite[Ch.~II, Theorem 6.1 and Theorem 6.9]{Lion69} or
   \cite[Ch.~III, Theorem 1.1, Theorem 3.1 and Theorem 3.4]{Tema79}
for domains in ${\mathbb R}^3$). 
It is based on Gronwall type Lemma \ref{l.Perov} with $0<\gamma_0\leq 1$, 
inequality \eqref{eq.L-G-N} and the following useful lemmata and formulas.

\begin{lemma}
\label{l.Lions}
Let $V$, $H$ and $V'$ be Hilbert spaces such that $V'$ is the dual to $V$ and the embeddings
$
   V \subset H \subset V'
$
are continuous and everywhere dense.
If
   $u \in L^2 (I,V)$ and
   $\partial_t u \in L^2 (I,V')$
then
\begin{equation*}
   \frac{d}{dt} \| u (\cdot, t) \|^2_{H}
 = 2\, \langle \partial_t u, u \rangle
\end{equation*}
and $u$ is equal almost everywhere to a continuous mapping from $[0,T]$ to $H$.
\end{lemma}

\begin{proof}
See \cite[Ch.~III, \S~1, Lemma~1.2]{Tema79}.
\end{proof}

The following standard statement, where
\begin{eqnarray*}
   \| u \|_{k,\mu,T}
 & =
 & \Big( \| \nabla^k u \|^2_{C (I, \mathbf{L}^2   )}
       + \mu\, \| \nabla^{k+1} u \|^2_{L^2 (I, \mathbf{L}^2   )}   \Big)^{1/2},
\\
   \| (f,u_0) \|_{0,\mu,T}
 & =
 & \Big( \| u_0 \|^2_{\mathbf{L}^2   }
       + \frac{2}{\mu}\, \| f \|^2_{L^2 (I, ({\mathcal H}^{(a)}_1)')}
       + \| f \|^2_{L^1 (I, ({\mathcal H}^{(a)}_1)')}
   \Big)^{1/2},
\end{eqnarray*}
gives a basic a priori estimate for weak solutions to 
\eqref{eq.NS.lin.weak}. 

\begin{lemma}
\label{p.En.Est.u}
Let
   $w \in L^2 (I,{\mathcal H}^{(a)}_1) \cap C (I,{\mathcal H}^{(a)}_0) \cap 
	L^{2} (I, \mathbf{L}^{\infty}   )$.
If
   $u \in C (I,{\mathcal H}^{(a)}_0) \cap L^2 (I,{\mathcal H}^{(a)}_1)$ and
   $(f,u_0) \in L^2 (I,({\mathcal H}^{(a)}_1)') \times {\mathcal H}^{(a)}_0$
satisfy
\begin{equation}
\label{eq.En.equal}
\left\{
\begin{array}{rcl}
   \displaystyle
   \frac{1}{2} \frac{d}{d\tau}\, \| u (\cdot,\tau) \|^2_{\mathbf{L}^2   }
 + \mu\, \| \nabla u \|^2_{\mathbf{L}^2   } 
 & \leq 
 & \langle f - \mathbf{B} (w,u), u \rangle 
\\
   u (\cdot,0)
 & =
 & u_0
\end{array}
\right.
\end{equation}
for all $t \in [0,T]$, then
\begin{equation}
\label{eq.En.Est2}
\begin{array}{rcl}
   \displaystyle
   \| u \|^2_{0,\mu,T}
 & \leq &
   \displaystyle
   \| (f,u_0) \|^2_{0,\mu,T}
   \Big(
     1
   + c_1 
     \exp \Big( \frac{c_2}{\mu} \int_0^T \| w (\cdot,t) \|^{2}_{\mathbf{L}^{\infty}   } dt \Big)
\\
 & + &
   \displaystyle
   \frac{c_3}{\mu}
   \Big( \int_0^T \| w (\cdot,t) \|^{2}_{\mathbf{L}^{\infty}   } dt \Big)
   \exp \Big( \frac{c_4}{\mu} \int_0^T \| w (\cdot,t)\|^{2}_{\mathbf{L}^{\infty}   } dt \Big)
\Big)   
\end{array}
\end{equation}
with positive constant $c_j$ independent on $w$ and $u$. 
\end{lemma}

It is easy to see that  for any $p \geq 1$ we have
\begin{equation}
\label{eq.En.Est2add}
   \| u \|_{L^{p} (I, \mathbf{L}^2   )}
 \leq T^{1/p}\, \| u \|_{L^\infty (I,\mathbf{L}^2   )}
\end{equation}
which accomplishes the energy estimate \eqref{eq.En.Est2}. 
The rest of the proof runs the standard scheme with the use of Faedo-Galerkin 
approximations, see \textit{ibid}. 
\end{proof}

Finally, one has to invoke the standard arguments related to the increasing of the 
regularity of weak solution $u$ for perturbations of Stokes equations, see, for instance 
\cite[Ch. VI, \S 5]{Lady70}, \cite[Ch. 3, \S 1]{Tema79} 
or \cite[Theorems 3.1 and 3.2]{ShlTa21} for this particular 
type of spaces in the case of vector fields over ${\mathbb R}^n \times (0,T)$ 
without periodicity assumptions. Of course, if $a=1$ then we additionally have to recover 
the `pressure'{} $p$ via known `velocity'{} $u$. Then the following statement can be used.

\begin{proposition}
\label{c.Sob.d}
Let $n\geq 3$, $s \in {\mathbb N}$, $k \in {\mathbb Z}_+$,  
and $F\in B^{k,2(s-1),s-1}_{\mathrm{for}}$ 
satisfies $\mathbf{P}F    =0$. 
Then there is a unique function $p  \in  B^{k+1,2(s-1),s-1}
_{\mathrm{pre,1}} $, satisfying 
\begin{equation*} 
\nabla p =F \mbox{ in } {\mathbb T}^n \times [0,T].
\end{equation*}
\end{proposition}

\begin{proof}
According to \cite{deRh55}, for any $w \in \mathbf{H}^{s} $, the following statements are equivalent: 
1) $w = \nabla p$ for some function $p \in H^{s+1} $; 
2) $w \in \dot{H}^{s}  \cap \ker (\mathrm{rot})$; 
3) $\mathbf{P} w = 0$. 

Hence it follows, in particular, that $\nabla$ establishes an isomorphism between
   $\dot{H}^{s+1} $ and
   $\dot{\mathbf{H}}^{s}  \cap \ker (\mathrm{rot})$.
Then the statement of the proposition follows actually from the Hodge formula
$$
   - \mathrm{rot}\,  \mathrm{rot}\,  \varphi  \, w 
   + \nabla \, \mathrm{div}\, \varphi\,  w 
 = w - \Pi w,
$$
i.e. given $F\in B^{k,2(s-1),s-1}_{\mathrm{for}}$ we have  $p =  \mathrm{div} \, \varphi\, F$ 
for $F\in B^{k,2(s-1),s-1}_{\mathrm{for}}$ satisfying $\mathbf{P}F    =0$. 
Then the continuity of the operator $ \mathrm{div}\, \varphi:
{\mathbf H}^{s-1} \to \dot{H}^{s}  $ on the scale of the Sobolev
spaces means that for $0\leq j \leq s-1$ we have 
$$
\sup_{t \in T}\|\partial _t ^j p\|_{H^{2s-1+k-2j}} \leq 
c\, \sup_{t \in T}\|\partial _t ^j F\|_{H^{2(s-1)+k-2j}},
$$
$$
\int_0^T \|\partial _t ^j p\|^2_{H^{2s-1+k-2j}} \, d\tau \leq 
c\, \int_0^T\|\partial _t ^j F\|2_{H^{2(s-1)+k-2j}} \,
$$
with a positive constant $c$ independent on $p$ and $F$, i.e. $p  \in  B^{k+1,2(s-1),s-1}
_{\mathrm{pre,1}} $. 
\end{proof}

This finishes our sketch of the proof of Theorem \ref{t.exist.NS.lin.strong}.  
\end{proof}

Since problem \eqref{eq.NS.lin} is a linearisation of the Navier-Stokes type equations at an 
arbitrary vector field $w$, it follows from Theorem \ref{t.exist.NS.lin.strong} that the 
nonlinear mapping given by the Navier-Stokes type equations is locally invertible.
The implicit function theorem for Banach spaces even implies that the local inverse mappings 
can be obtained from the contraction principle of Banach.
In this way we obtain what we shall call the open mapping theorem for problem \eqref{eq.NS}.

\begin{theorem}[\bf A. Shlapunov, N. Tarkhanov]
\label{t.open.NS.short}
Let
   $s \in \mathbb N$ and
   $k \in {\mathbb Z}_+$ satisfy $2s+k>\frac{n}{2}$. 
Then \eqref{eq.NS} induces an injective continuous open nonlinear mapping
\begin{equation}
\label{eq.map.A}
   \mathcal{A}_{a} :
   B^{k,2s,s}_\mathrm{vel,a} (I) \times B^{k+1,2(s-1),s-1}_\mathrm{pre,a} (I)
\to
   B^{k,2(s-1),s-1}_\mathrm{for} (I) \times \mathcal{H}^{(a)}_{2s+k}. 
\end{equation}
\end{theorem}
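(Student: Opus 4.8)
The plan is to prove Theorem \ref{t.open.NS.short} by reducing the three claims—continuity, injectivity, and openness—to the already-established linear theory of Theorem \ref{t.exist.NS.lin.strong} via the Implicit Function Theorem for Banach spaces. First I would verify that the nonlinear map $\mathcal{A}_a$ is well-defined and continuous. Writing out $\mathcal{A}_a(u,p)=(\partial_t u-\mu\Delta u+\mathbf{D}u+a\nabla p,\;\delta_t u)$, the linear terms land in the correct target spaces by Lemma \ref{l.NS.cont.0}, while the nonlinear term $\mathbf{D}u=\mathcal{M}(u,u)$ maps into $B^{k,2(s-1),s-1}_{\mathrm{for}}(I)$ by Lemma \ref{l.NS.cont}; the quadratic estimate \eqref{eq.B.pos.bound} shows $\mathbf{D}$ is even locally Lipschitz, hence continuous. (The hypothesis $2s+k>n/2$ is exactly what feeds the Gagliardo–Nirenberg and Sobolev embeddings of Lemma \ref{l.emb.Bochner} needed to control these products.)

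Next I would compute the Fr\'echet derivative of $\mathcal{A}_a$ at an arbitrary point $(w,q)$. Since the only nonlinearity is the quadratic term $\mathbf{D}u=\mathcal{M}(u,u)$, and $\mathbf{B}(w,u)=\mathcal{M}(u,w)+\mathcal{M}(w,u)$ is precisely its symmetric bilinear polarisation, the derivative of $u\mapsto \mathbf{D}u$ at $w$ in direction $v$ is $\mathbf{B}(w,v)$. Therefore the differential
$$
   \mathcal{A}_a'(w,q)(v,r)
 = (\partial_t v-\mu\Delta v+\mathbf{B}(w,v)+a\nabla r,\;\delta_t v)
$$
is exactly the linearised operator $\mathcal{A}_{w,a}$ appearing in \eqref{eq.map.Aw}. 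By Theorem \ref{t.exist.NS.lin.strong}, for every $w\in B^{k,2s,s}_{\mathrm{vel,a}}(I)$ this differential is a bounded linear isomorphism with bounded inverse. One must also check that $(w,q)\mapsto \mathcal{A}_a'(w,q)$ is continuous into $\mathcal{L}(\cdot,\cdot)$, which again follows from the bilinearity of $\mathbf{B}$ and estimate \eqref{eq.B.pos.bound}; thus $\mathcal{A}_a$ is continuously Fr\'echet differentiable with everywhere-invertible derivative. The Inverse Function Theorem for Banach spaces then yields that $\mathcal{A}_a$ is a local diffeomorphism at every point, and in particular a local homeomorphism, which immediately gives \textbf{openness}.

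For \textbf{injectivity}, local invertibility alone is not enough, so I would argue globally. Suppose $\mathcal{A}_a(u_1,p_1)=\mathcal{A}_a(u_2,p_2)$. Setting $v=u_1-u_2$ and $r=p_1-p_2$ and subtracting the two copies of \eqref{eq.NS}, the quadratic term gives $\mathbf{D}u_1-\mathbf{D}u_2=\mathbf{B}(u_1,v)-\mathcal{M}(v,v)$, so $v$ satisfies a \emph{linear} problem of the form \eqref{eq.NS.lin} with $w$ built from $u_1,u_2$, zero forcing, and zero initial data; the uniqueness half of Theorem \ref{t.exist.NS.lin.strong}, realised through the energy estimate \eqref{eq.En.Est2} with vanishing right-hand side and Gronwall Lemma \ref{l.Perov}, forces $v\equiv 0$ and then $a\nabla r\equiv 0$, whence $r\equiv 0$ in the pressure space (which contains no $t$-only functions). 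I expect the main obstacle to be precisely this passage from \emph{local} to \emph{global} injectivity: the Implicit Function Theorem is purely local, so the global statement must be extracted separately from the energy-type a priori estimate and the structure of the nonlinearity, taking care that the difference $v$ genuinely solves a linearised problem with coefficients in the required regularity class so that the uniqueness theorem applies.
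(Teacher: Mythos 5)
Your proposal is correct and follows essentially the same route as the paper: continuity from Lemmata \ref{l.NS.cont.0} and \ref{l.NS.cont}, openness by identifying the Fr\'echet derivative with the linearised operator $\mathcal{A}_{w,a}$ of Theorem \ref{t.exist.NS.lin.strong} and invoking the implicit/inverse function theorem for Banach spaces, and injectivity via the energy identity for the difference $v=u_1-u_2$ combined with the Sobolev embedding into $L^2(I,\mathbf{L}^\infty)$ and Gronwall's Lemma \ref{l.Perov}. (Your polarisation $\mathbf{D}u_1-\mathbf{D}u_2=\mathbf{B}(u_1,v)-\mathcal{M}(v,v)$ is in fact the algebraically correct version of the paper's \eqref{eq.M.diff}, and the residual quadratic term in $v$ is harmless since the Gronwall coefficient only requires $\|v\|^2_{\mathbf{L}^\infty}$ to be integrable in $t$.)
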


The principal significance of the theorem is in the assertion that for each point
   $(u_0,p_0) \in B^{k,2s,s}_\mathrm{vel,a} (I) \times B^{k+1,2(s-1),s-1}_\mathrm{pre,a} (I)$
there is a neighbourhood $\mathcal{V}$ of the image $\mathcal{A}_a (u_0,p_0)$ in
   $B^{k,2(s-1),s-1}_\mathrm{for} (I) \times {\mathcal H}^{(a)}_{2s+k}$,
such that $\mathcal{A}_a$ is a homeomorphism of the open set
   $\mathcal{U} := \mathcal{A}_a^{-1} (\mathcal{V})$
onto $\mathcal{V}$.

\begin{proof}
Indeed, the continuity of the mapping $\mathcal{A}_a$ is clear from Lemma \ref{l.NS.cont}.
Let 
   $(u',p')$ and
   $(u'',p'')$
belong to
   $B^{k,2s,s}_\mathrm{vel,a} (I) \times B^{k+1,2(s-1),s-1}_\mathrm{pre,a} (I)$
and
   $\mathcal{A}_a (u',p') = \mathcal{A}_a (u'',p'')$. 
Integrating by parts, we easily see that 
\begin{equation}\label{eq.nabla.p}
(a \, \nabla p, v)_{\mathbf{L}^2} = 0
\mbox{ for all } v \in {\mathcal H}^{(a)}_{1}
\end{equation}
if $a=0$ or $a=1$ and $p \in B^{k+1,2(s-1),s-1}_\mathrm{pre,a} (I)$. 
Then,  Lemma \ref{l.Lions}, formula \eqref{eq.nabla.p}
and an integration by parts with the use of \eqref{eq.deRham} 
imply that for $v= u'-u''$ we have 
\begin{equation}
\label{eq.NS.weak.u}
   \frac{1}{2}\frac{d}{dt} \|v\|^2_{\mathbf{L}^2   }
 + \mu \|\nabla v\|^2_{\mathbf{L}^2   } 
  =
  ( \mathbf{D} u' - \mathbf{D} u'', v )_{\mathbf{L}^2}.
\end{equation}
Since    the bilinear form $\mathbf{B}$ is symmetric and
   $\mathbf{B} (u,u) = 2 \mathbf{D} (u)$,
we easily obtain
\begin{equation}
\label{eq.M.diff}
   \mathbf{D} (u') - \mathbf{D} (u'')
 = \mathbf{B} (u', u'-u'') + (1/2)\, \mathbf{B} (u'-u'', u'-u'').
\end{equation}
By the Sobolev embedding theorem, see \eqref{eq.Sob.index}, the space
   $B^{k,2s,s}_\mathrm{vel,a} (I) $
is continuously embedded into $L^2 (I,\mathbf{L}^\infty )$ 
if $2s+k >\frac{n}{2}$ and then, after an integration by parts, 
\begin{equation} \label{eq.uniq.est}
\int_0^t \left| ( \mathbf{D} u' - \mathbf{D} u'', v )_{\mathbf{L}^2} \right| 
\, d\tau \leq 
C\, \int_0^t \|\nabla v\|_{\mathbf{L}^2} \| v\|_{\mathbf{L}^2}
\Big(\|u'\|_{\mathbf{L}^\infty}  + \|v\|_{\mathbf{L}^\infty} \Big) \, d\tau \leq 
\end{equation}
$$
\int_0^t \Big(\frac{\mu}{2}\, \|\nabla v\|^2_{\mathbf{L}^2} 
+ C_\mu \Big(\|u'\|^2_{\mathbf{L}^\infty}  + \|v\|^2_{\mathbf{L}^\infty} \Big) \,  \| v\|^2_{\mathbf{L}^2} \Big) \, d\tau 
$$
for all $t \in [0,T]$ with positive constants $C$ and $C_\mu $ independent 
on $u'$ and $u''$. 
Hence, as $v(x,0)=0$, combining \eqref{eq.NS.weak.u}, \eqref{eq.uniq.est} with Gronwall 
type Lemma \ref{l.Perov} for $0<\gamma_0\leq 1$, we conclude that $v=0$, i.e.  $u' = u''$. 

If $a=1$ then 
   $\nabla (p' - p'') (\cdot,t) = 0$
for all $t \in [0,T]$.
It follows that the difference $p'-p''$ is identically equal to a function $c (t)$ on
the segment $[0,T]$.
Since $p'-p'' \in C (I,\dot{H}^{k})$, we conclude by Proposition \ref{c.Sob.d} that
   $p'-p'' \equiv 0$.
So, the operator $\mathcal{A}_a$ of \eqref{eq.map.A} is injective for both $a=0$ and $a=1$.

Now, equality \eqref{eq.M.diff}  makes it evident that the Frech\'et derivative
$\mathcal{A}'_{a,(w,p_0)}$ of the nonlinear mapping $\mathcal{A}_a$ at an arbitrary point 
$
   (w,p_0)
 \in B^{k,2s,s}_\mathrm{vel,a} (I) \times B^{k+1,2(s-1),s-1}_\mathrm{pre,a} (I)
$ 
coincides with the continuous linear mapping $\mathcal{A}_{a,w}$ of \eqref{eq.map.Aw}.
By Theorem \ref{t.exist.NS.lin.strong}, $\mathcal{A}_{a,w}$ is an invertible continuous linear
mapping from
   $B^{k,2s,s}_\mathrm{vel,a} (I) \times B^{k+1,2(s-1),s-1}_\mathrm{pre,a} (I)$ to
   $B^{k,2(s-1),s-1}_\mathrm{for} (I) \times {\mathcal H}^{(a)}_{k+2s}$.
Both the openness of the mapping $\mathcal{A}_a$ and the continuity of its local 
inverse mapping  follow now from the implicit function theorem for Banach spaces,
   see for instance  \cite[Theorem 5.2.3, p.~101]{Ham82}. 
	Actually, we need the following simple 
corollary of this theorem: Let $A: B_1 \to B_2$ be an  everywhere defined 
	smooth (admitting the Fr\'echet derivative at each point) map 
	between Banach spaces $B_1$, $B_2$. If for some point $v_0$ the Fr\'echet 
	derivative $A'_{|v_0}$ is a linear continuously invertible map then we can find 
	a neighbourhood $V$ of $v_0$ and 	a neighbourhood $U$ of $g_0 = A(v_0)$ 
	such that the map $A_a$ gives a one-to-one map of $V$ onto $U$ and the (local) 
	inverse map $A^{-1}: U \to V$ is continuous and smooth. 
\end{proof}

Thus, we identify the Navier-Stokes type equations \eqref{eq.NS} 
with the following operator equation related to the mapping \eqref{eq.map.A}:
given data $f \in B^{k,2(s-1),s-1}_\mathrm{for} (I)$ and $u_0 \in {\mathcal H}^{(a)}
_{k+2s}$ find a pair $(u,p) \in B^{k,2s,s}_\mathrm{vel,a} (I) \times B^{k+1,2(s-1),s-1}_
\mathrm{pre,a} (I)$ satisfying 
\begin{equation} \label{eq.NS.map}
{\mathcal A}_a (u,p) = (f,u_0).
\end{equation}

Then theorem \ref{t.open.NS.short} suggests a clear direction for the development of the 
topic. 

\begin{corollary}
\label{c.clopen}
Let
   $s \in \mathbb N$ and
   $k \in {\mathbb Z}_+$, $2s+k>\frac{n}{2}$. 
The range of the mapping \eqref{eq.map.A} is closed if and only if it coincides with the whole
destination space.
\end{corollary}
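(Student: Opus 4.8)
The plan is to invoke the purely topological principle that a nonempty subset of a connected topological space which is simultaneously open and closed must be the whole space. The destination space $Y := B^{k,2(s-1),s-1}_\mathrm{for} (I) \times \mathcal{H}^{(a)}_{2s+k}$ is a Banach space, hence a topological vector space; in particular it is path-connected, since any two of its points are joined by the line segment between them, and therefore connected. This connectivity is the only structural feature of $Y$ that the argument uses.

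First I would record that the range $R := \mathcal{A}_a\big(B^{k,2s,s}_\mathrm{vel,a} (I) \times B^{k+1,2(s-1),s-1}_\mathrm{pre,a} (I)\big)$ is a nonempty open subset of $Y$. It is nonempty because it contains, for instance, the image $\mathcal{A}_a(0,0)$ of the zero pair. It is open precisely because Theorem \ref{t.open.NS.short} asserts that $\mathcal{A}_a$ is an open mapping, so that the image of the (open) whole domain is open in $Y$.

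The equivalence is then immediate in both directions. If $R$ coincides with $Y$, then $R$ is trivially closed, being the entire space. Conversely, suppose $R$ is closed. Then $R$ is at once nonempty, open and closed in the connected space $Y$; since the only clopen subsets of a connected space are the empty set and the space itself, we conclude $R = Y$, that is, the range fills the whole destination space.

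There is essentially no analytic obstacle to overcome here: the entire content has been front-loaded into the open mapping statement of Theorem \ref{t.open.NS.short}, and what remains is the standard clopen-in-a-connected-space dichotomy. The only point that genuinely deserves a word of justification is the openness of $R$, which is exactly what Theorem \ref{t.open.NS.short} supplies; connectedness of the Banach target is automatic. The \emph{significance} of the corollary, of course, lies elsewhere: it reduces the surjectivity of $\mathcal{A}_a$ to the single closedness property of its image, which is the task taken up in the subsequent sections.
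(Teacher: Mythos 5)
Your proposal is correct and follows essentially the same route as the paper: connectedness of the Banach target via convexity, openness and nonemptiness of the range from Theorem \ref{t.open.NS.short}, and the clopen dichotomy for connected spaces. The only difference is that you spell out the nonemptiness and openness of the range explicitly, which the paper leaves implicit.
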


\begin{proof}
Since the destination space is convex, it is connected. As is known, the only clopen 
(closed and open) sets in a connected topological vector space are the empty set and the 
space itself. Hence, the range of the mapping $\mathcal{A}_a$ is closed if and only if it 
coincides with the whole destination space.
\end{proof}

\section{A surjectivity criterion}
\label{s.surjective}

Inspired by \cite{Pro59}, \cite{Serr62}, \cite{Lady70} and \cite{Lion61,Lion69},
let us obtain a surjectivity criterion  for mapping \eqref{eq.map.A} in terms of 
$L^{\mathfrak{s}} (I,\mathbf{L}^{\mathfrak{r}}   )\,$-estimates 
for solutions to \eqref{eq.NS.map} via the data. 

\begin{theorem}[\bf A. Shlapunov, N. Tarkhanov]
\label{t.sr}
Let  $s \in \mathbb{N}$,
   $k \in {\mathbb Z}_+$, $2s+k>\frac{n}{2}$, 
	the numbers $\mathfrak{r}$, $\mathfrak{s}$ satisfy \eqref{eq.s.r} and 
\begin{equation} \label{eq.trilinear}
	({\mathbf D} v, v)_{\mathbf{L}^2} \geq 0
\mbox{ for all } v \in {\mathcal H}^{(a)}_{2s+k} .
\end{equation}
Then mapping \eqref{eq.map.A} is surjective 
if and only if, given subset   $S = S_\mathrm{vel,a} \times S_\mathrm{pre,a}$ of the product
   $B^{k,2s,s}_\mathrm{vel,a} (I) \times B^{k+1,2(s-1),s-1}_\mathrm{pre,a} (I)$ 
such that the image $\mathcal{A} (S)$ is precompact in the space
   $B^{k,2(s-1),s-1}_\mathrm{for} (I) \times {\mathcal H}^{(a)}_{2s+k}$,
the set $S_\mathrm{vel,a}$ is bounded in the space
   $L^{\mathfrak{s}} (I,\mathbf{L}^{\mathfrak{r}}   )$.
\end{theorem}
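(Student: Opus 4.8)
The plan is to prove the two implications separately, exploiting the open mapping Theorem \ref{t.open.NS.short} and the closedness criterion of Corollary \ref{c.clopen}. The nontrivial direction is to show that the boundedness of preimages of precompact sets in $L^{\mathfrak{s}} (I,\mathbf{L}^{\mathfrak{r}})$ forces surjectivity, so I would start there. Let $(f,u_0)$ be an arbitrary element of the closure of the range of $\mathcal{A}_a$; by Corollary \ref{c.clopen} it suffices to show it actually lies in the range. Pick a sequence $(f_\nu,u_{0,\nu}) = \mathcal{A}_a(u_\nu,p_\nu)$ converging to $(f,u_0)$ in $B^{k,2(s-1),s-1}_\mathrm{for} (I) \times {\mathcal H}^{(a)}_{2s+k}$. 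The convergent sequence together with its limit forms a compact, hence precompact, set $\mathcal{A}_a(S)$, where $S$ is the corresponding set of preimages $(u_\nu,p_\nu)$. The hypothesis then gives that $\{u_\nu\}$ is bounded in $L^{\mathfrak{s}} (I,\mathbf{L}^{\mathfrak{r}})$.

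\medskip

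The heart of the argument is to bootstrap this single uniform bound into a uniform bound on the full $B^{k,2s,s}_\mathrm{vel,a} (I)$-norm of the $u_\nu$, so that a subsequence converges and the open mapping theorem can close the loop. First I would derive the basic energy identity for each $u_\nu$: testing the first equation of \eqref{eq.NS} against $u_\nu$ itself, using Lemma \ref{l.Lions}, the orthogonality \eqref{eq.nabla.p} of the pressure gradient, and crucially the positivity assumption \eqref{eq.trilinear} $(\mathbf{D} v,v)_{\mathbf{L}^2}\geq 0$, yields control of $\|u_\nu\|_{C(I,\mathbf{L}^2)}$ and $\mu\|\nabla u_\nu\|_{L^2(I,\mathbf{L}^2)}$ by the data $(f_\nu,u_{0,\nu})$, which are uniformly bounded by convergence. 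The role of the $L^{\mathfrak{s}} (I,\mathbf{L}^{\mathfrak{r}})$-bound, via the Prodi--Serrin exponents \eqref{eq.s.r}, is precisely to control the nonlinear term $\mathbf{B}(u_\nu,\cdot)$ when passing to higher-order estimates; it furnishes the finite quantity $\int_0^T \|u_\nu(\cdot,t)\|^{2}_{\mathbf{L}^{\infty}}\,dt$ (or its analogue) that appears in the exponent of the energy estimate \eqref{eq.En.Est2}. Viewing $u_\nu$ as a solution of the \emph{linearised} problem \eqref{eq.NS.lin} with $w = u_\nu$ and right-hand side $f_\nu$, one applies Lemma \ref{p.En.Est.u} and the higher-regularity machinery behind Theorem \ref{t.exist.NS.lin.strong} iteratively, gaining two spatial derivatives (or one time derivative) at each step. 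Since the exponential factors depend only on the now-uniformly-bounded $L^{\mathfrak{s}} (I,\mathbf{L}^{\mathfrak{r}})$-norm and the uniformly bounded data, every such estimate is uniform in $\nu$, giving $\sup_\nu \|u_\nu\|_{B^{k,2s,s}_\mathrm{vel,a} (I)} < \infty$.

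\medskip

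Once the $u_\nu$ are bounded in the Banach space $B^{k,2s,s}_\mathrm{vel,a} (I)$, I would pass to the corresponding pressures: when $a=1$, Proposition \ref{c.Sob.d} recovers $p_\nu$ from the velocity with a uniform bound, while for $a=0$ the pressure space is trivial. A standard compactness/weak-limit argument, combined with the fact that $\mathcal{A}_a$ is injective with continuous local inverse by Theorem \ref{t.open.NS.short}, then produces a genuine preimage $(u,p)$ with $\mathcal{A}_a(u,p)=(f,u_0)$; concretely, the uniform bound localizes the tail of the sequence inside a neighbourhood on which the local inverse of $\mathcal{A}_a$ is a homeomorphism, so $(u_\nu,p_\nu)$ itself converges to such a preimage. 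This shows the range is closed, hence by Corollary \ref{c.clopen} the map is surjective. Conversely, if $\mathcal{A}_a$ is surjective, then for any $S$ with $\mathcal{A}_a(S)$ precompact one uses the same energy and regularity estimates directly: surjectivity together with the a priori bounds of Lemma \ref{p.En.Est.u} shows the $L^{\mathfrak{s}} (I,\mathbf{L}^{\mathfrak{r}})$-norms of $S_\mathrm{vel,a}$ are controlled by the (bounded) images, giving boundedness. The main obstacle I anticipate is the uniformity in the bootstrap: one must verify that the constants and exponential factors in the iterated higher-order estimates genuinely depend only on the $L^{\mathfrak{s}} (I,\mathbf{L}^{\mathfrak{r}})$-bound and the data, and not on higher norms of $u_\nu$ one has not yet controlled — this is where the Gagliardo--Nirenberg inequality \eqref{eq.L-G-N}, the embedding Lemma \ref{l.emb.Bochner}, and the positivity \eqref{eq.trilinear} must be combined with care to avoid a circular dependence.
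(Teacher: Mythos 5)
Your treatment of the main implication (boundedness of preimages of precompact sets implies surjectivity) follows essentially the same route as the paper: the basic energy estimate from the positivity assumption \eqref{eq.trilinear}, a bootstrap of higher-order estimates in which the $L^{\mathfrak{s}} (I,\mathbf{L}^{\mathfrak{r}})$-bound enters the Gronwall exponents (the paper organizes this as Lemmata \ref{p.En.Est.u.strong}, \ref{l.En.Est.Du.L2}, \ref{t.En.Est.g.2}, \ref{c.En.Est.g.k1} and an induction on $s$ in Lemma \ref{c.En.Est.g.ks}), a weak-compactness passage to a limit solution, recovery of the pressure via Proposition \ref{c.Sob.d}, and the clopen argument of Corollary \ref{c.clopen}. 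One caution: your suggestion that the uniform bound ``localizes the tail of the sequence inside a neighbourhood on which the local inverse of $\mathcal{A}_a$ is a homeomorphism'' is circular as stated, since the local inverse provided by Theorem \ref{t.open.NS.short} exists only around points already known to be in the range, and that is precisely what is to be proved; the weak-limit argument you also mention (and which the paper carries out in Lemma \ref{l.closure}, including the regularity upgrade via the equation itself) is the one that actually works.

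There is, however, a genuine gap in your converse direction. You claim that surjectivity together with ``the a priori bounds of Lemma \ref{p.En.Est.u}'' controls the $L^{\mathfrak{s}} (I,\mathbf{L}^{\mathfrak{r}})$-norms of $S_\mathrm{vel,a}$ by the images. This fails for $n \geq 3$: the basic energy estimate only bounds $\| u \|_{C(I,\mathbf{L}^2)}$ and $\| \nabla u \|_{L^2(I,\mathbf{L}^2)}$, which by interpolation control $L^{s'}(I,\mathbf{L}^{r'})$-norms only for exponents satisfying \eqref{eq.sigma.rho}, and the paper stresses that \eqref{eq.sigma.rho} and the Prodi--Serrin condition \eqref{eq.s.r} never coincide when $n \geq 3$. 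If the energy estimate did give the \eqref{eq.s.r}-norms, the whole theorem would collapse to a universal a priori bound and surjectivity would be automatic in all cases, which is exactly what the paper says is not available. The correct argument is purely topological: assume $S_\mathrm{vel,a}$ unbounded in $L^{\mathfrak{s}} (I,\mathbf{L}^{\mathfrak{r}})$, extract $(u_i,p_i)$ with diverging norms, use precompactness of $\mathcal{A}_a(S)$ to pass to a convergent subsequence of data, use closedness of the range (which follows from surjectivity and Theorem \ref{t.open.NS.short}) and continuity of the local inverse to conclude that the corresponding $(u_{i_m},p_{i_m})$ converge, hence are bounded, in $B^{k,2s,s}_\mathrm{vel,a}(I)$, and then invoke the continuous embedding $B^{k,2s,s}_\mathrm{vel,a}(I) \hookrightarrow L^{\mathfrak{s}} (I,\mathbf{L}^{\mathfrak{r}})$ of Lemma \ref{l.emb.Bochner} to reach a contradiction. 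No energy estimate is needed in this direction.
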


\begin{proof} Let mapping \eqref{eq.map.A} be surjective. Then the range of this mapping 
is closed according to Theorem \ref{t.open.NS.short}. Fix a subset   $S = S_\mathrm{vel,a} \times S_\mathrm{pre,a}$ of the product
   $B^{k,2s,s}_\mathrm{vel,a} (I) \times B^{k+1,2(s-1),s-1}_\mathrm{pre,a} (I)$ 
such that the image $\mathcal{A}_a (S)$ is precompact in 
   $B^{k,2(s-1),s-1}_\mathrm{for} (I) \times {\mathcal H}^{(a)}_{2s+k}$. 
If the set $S_\mathrm{vel}$ is unbounded in the space
   $L^{\mathfrak{s}} (I,\mathbf{L}^{\mathfrak{r}}   )$ then 
	there is a sequence $\{ (u_i,p_i) \} \subset S$ such that
\begin{equation}
\label{eq.unbounded}
   \lim_{i \to \infty} \| u_i \|_{L^{\mathfrak{s}} 
(I,\mathbf{L}^{\mathfrak{r}}   )}
 = \infty.
\end{equation}
As the set $\mathcal{A}_a (S)$ is precompact in
   $B^{k,2(s-1),s-1}_\mathrm{for} (I) \times {\mathcal H}^{(a)}_{2s+k}$,
we conclude that the corresponding sequence of data
   $\{ \mathcal{A}_a (u_i,p_i) = (f_i,u_{i,0})\}$
contains a subsequence $\{ (f_{i_m},u_{i_m,0})\}$ which converges to an element
   $(f,u_0) $ in this space. But the range of the map is closed and hence 
for the data $(f,u_0)$ there is a unique solution $(u,p)$ to 
\eqref{eq.NS.map} 
in the space
   $B^{k,2s,s}_\mathrm{vel,a} (I) \times B^{k+1,2(s-1),s-1}_\mathrm{pre,a} (I)$
and the sequence $\{ (u_{i_m},p_{i_m}) \}$ converges to $(u,p) $ in this space.
Therefore, $\{ (u_{i_m},p_{i_m}) \}$ is bounded in
   $B^{k,2s,s}_\mathrm{vel,a} (I) \times B^{k+1,2(s-1),s-1}_\mathrm{pre,a} (I)$
and this contradicts \eqref{eq.unbounded} because the space $B^{k,2s,s}_\mathrm{vel,a} (I)$
 is embed\-ded continuously into the space $L^{\mathfrak{s}} 
(I,\mathbf{L}^{\mathfrak{r}}   )$ for any pair $\mathfrak{r}$, $\mathfrak{s}$ satisfying 
\eqref{eq.s.r}.

We continue with typical estimates for 
solutions to operator equati\-on \eqref{eq.NS.map}.

\begin{lemma}
\label{p.En.Est.u.strong}
Let \eqref{eq.trilinear} be fulfilled. If
   $(u,p) \in B^{0,2,1}_{\mathrm{vel,a}} (I) \times B^{1,0,0}_{\mathrm{pre,a}} (I)$
is a solution to the Navier-Stokes type equations \eqref{eq.NS.map} with data
   $(f,u_0) \in B^{0,0,0}_{\mathrm{for}} (I) \times {\mathcal H}^{(a)}_2$,
then
\begin{equation}
\label{eq.En.Est2imp}
   \| u \|_{0,\mu,T}
 \leq
c_0   \| (f,u_0) \|_{0,\mu,T}
\end{equation}
with a positive constant $c_0$ independent on $u$, $f$ and $u_0$.
\end{lemma}

\begin{proof}
For a solution  $(u,p)$ to \eqref{eq.NS.map} related to 
data $(f,u_0)$ within the declared function classes, the component   
$u$ belongs to $C (I, \mathbf{H}^{2}) \cap L^2 (I, \mathbf{H}^{3})$, and both $\partial_t u $ 
and $f$  belong to $C (I, \mathbf{L}^{2}) \cap L^2 (I, \mathbf{H}^{1})$. 
Next, we may calculate the inner product $({\mathcal A}_a u, u)_{\mathbf{L}^2}$
with the use \eqref{eq.trilinear}, \eqref{eq.nabla.p} 
and Lemma \ref{l.Lions}, obtaining 
\begin{equation*}
 \frac{1}{2}\frac{d}{dt} \|u\|^2_{\mathbf{L}^2}
 + \mu \| \nabla u\|^2_{\mathbf{L}^2} 	\leq \langle f , u \rangle   
	\end{equation*}
Finally, applying Lemma \ref{p.En.Est.u}  with $w = 0$, we conclude that 
the estimate follow.
\end{proof}

We note that estimate 
\eqref{eq.En.Est2imp} corresponds to the basic Energy Estimate for weak solutions to 
\eqref{eq.NS}, see, for instance, \cite[\S~2.3, \S~2.4]{Tema95};
it is slightly stronger than the standard  one because 
the elements of the space $B^{0,2,1}_{\mathrm{vel}} (I)$ are already rather regular.
Thus, the estimate implies neither an Existence Theorem for operator equation 
\eqref{eq.NS.map} nor an improvement of the regularity of weak solutions to 
\eqref{eq.NS}.

Next, we obtain some estimates for the derivatives vector fields with respect to the space 
variables.

\begin{lemma}
\label{l.En.Est.Du.L2}
Let $n\geq 2$, 
   $k \in \mathbb{Z}_+$,  $k+2>\frac{n}{2}$, 
and $\mathfrak{s}$,   $\mathfrak{r}$ satisfy \eqref{eq.s.r}.
Then for any
   $\varepsilon > 0$
and for all
   $u \in \mathbf{H}^{2+k} $
it follows that for all $0\leq k'\leq k$ we have
\begin{equation}
\label{eq.En.Est3bbb}
   \| (- \Delta)^{\frac{k'}{2}} \mathbf{D} u \|^2_{\mathbf{L}^2   }
 \, \leq \,
   \varepsilon\, \| \nabla^{k'+2} u \|^2_{\mathbf{L}^2    }
+ c (k',\mathfrak{s},\mathfrak{r},\varepsilon)\, \| u \|^{\mathfrak{s}}
_{\mathbf{L}^\mathfrak{r} } \|\nabla^{k'+1} u \|^2_{\mathbf{L}^2   } +
 \end{equation}                                               
\begin{equation*}
   c (k',\mathfrak{s},\mathfrak{r})\, \| u \|^{2}_{\mathbf{L}^2   }
                                    \| u \|^{2}_{\mathbf{L}^\mathfrak{r}  }+
 c (k',\mathfrak{s},\mathfrak{r})\, \| u \|^{2}_{\mathbf{L}^2   }
\end{equation*}  
with positive constants depending on the parameters in parentheses and not necessarily the same in diverse applications, the constants being independent of $u$.
\end{lemma}

\begin{proof}  First, we note that under the hypothesis of the lemma, 
$u \in L^q ({\mathbb R}^n)$ for each $q\in [2,+\infty]$.
On using  the Leibniz rule and   the H\"older inequality we deduce that
\begin{equation}
\label{eq.En.Est3bu}
   \| (-\Delta)^{\frac{k'}{2}} \mathbf{D}  u \|^2_{\mathbf{L}^2   }
 \leq
   \sum_{j=0}^{k'}
   C^k_j\,
   \| \nabla^{k'+1-j} u \|^2_{\mathbf{L}^{\frac{2q}{q-1}}   }
   \| \nabla^{j} u \|^2_{\mathbf{L}^{2 q}    }
\end{equation}
with binomial type coefficients $C_{k'}^j$ and any $q \in (1,\infty)$.

For $k' = 0$ there are no other summands than that with $j=0$.
But for $k' \geq 1$ we have to consider the items corresponding to $1 \leq j \leq k'$, too.
The standard interpolation inequalities on compact manifolds
   (see for instance \cite[Theorem 2.2.1]{Ham82})
hint us that those summands which correspond to $1 \leq j \leq k'$ could actually be 
estimated by the item with $j=0$. We realize this as follows:
for any $j$ satisfying $1 \leq j \leq k'$ there are numbers
   $q > 1$ and
   $c > 0$
depending on $k'$ and $j$ but not on $u$, such that
\begin{equation}
\label{eq.En.Est3y}
   \| \nabla^{k'+1-j} u \|_{\mathbf{L}^{\frac{2 q}{q-1}}   }
   \| \nabla^{j} u \|_{\mathbf{L}^{2 q}   }
 \leq
   c \Big( \| \nabla^{k'+1} u \|_{\mathbf{L}^{\frac{2 \mathfrak{r}}{\mathfrak{r}-2}}   }
              \| u \|_{\mathbf{L}^{\mathfrak{r}}   }
         + \| u \|_{\mathbf{L}^{2}   }
     \Big).
\end{equation}
Indeed, we may apply Gagliardo-Nirenberg inequality \eqref{eq.L-G-N} if we prove that for each
$1 \leq j \leq k'$ there is a $q > 1$ depending on $k'$ and $j$, such that the system of algebraic
equations
\begin{equation*}
\left\{
\begin{array}{rcl}
   \displaystyle
   \frac{1}{2q}
 & =
 & \displaystyle
   \frac{j}{n}
 + \Big( \frac{\mathfrak{r}-2}{2 \mathfrak{r}}-\frac{k'+1}{n} \Big)\vartheta_1
 + \frac{1-\vartheta_1}{\mathfrak{r}},
\\
   \displaystyle
   \frac{q-1}{2q}
 & =
 & \displaystyle
   \frac{k'+1-j}{n}
 + \Big( \frac{\mathfrak{r}-2}{2 \mathfrak{r}} -\frac{k'+1}{n} \Big) \vartheta_2
 + \frac{1-\vartheta_2}{\mathfrak{r}}
\end{array}
\right.
\end{equation*}
admits solutions 
$
   \vartheta_1
 \in
   [\frac{j}{k'+1},1)$, 
$   \vartheta_2
 \in
   [\frac{k'+1-j}{k'+1},1)
$. 
On adding these equations we see that
$$
   \frac{1}{2} - \frac{k'+1}{n} - \frac{2}{\mathfrak{r}} = \Big( \frac{1}{2} - 
	\frac{k'+1}{n} - \frac{2}{\mathfrak{r}} \Big) (\vartheta_1 + \vartheta_2),
$$
i.e., the system is reduced to
$$
\left\{
\begin{array}{rcl}
   \vartheta_1 q (2 (k'+1) \mathfrak{r }+ 4n  - n \mathfrak{r})
 & =
 & 2 j\, \mathfrak{r} q + 2n q - n \mathfrak{r},
\\
   \vartheta_1 + \vartheta_2
 & =
 & 1.
\end{array}
\right.
$$
Choose
$
   \displaystyle
   \vartheta_1 = \frac{j}{k'+1}
$, 
$
   \displaystyle
   \vartheta_2 = \frac{k'+1-j}{k'+1}
$
to obtain 
$
   q
 = q (k',j)
 = \frac{(k'+1) \mathfrak{r}}{2 (k'+1) + j (\mathfrak{r} -4)} 
$.
Since
   $\mathfrak{r} > n \geq 3$ and
   $1 \leq j \leq k'$,
an easy calculation shows that
$$
\begin{array}{rcccccc}
   2 (k'+1) + j (\mathfrak{r} - 4)
 & >
 & 2 (k'+1) - 2j
 & \geq
 & 2
 & >
 & 0,
\\\
   (k'+1) \mathfrak{r} - (2 (k'+1) + j (\mathfrak{r} -4))
 & =
 & (k'+1) (\mathfrak{r}-2) - j (\mathfrak{r} - 4)
 & >
 & 0,
 &
 &
\end{array}
$$
i.e., $q (k',j)> 1$ in this case, and so \eqref{eq.En.Est3y} holds true. 
Therefore, if we choose $q (k',0) = \mathfrak{r} / 2 > 1$,  estimates of
   \eqref{eq.En.Est3bu} and
   \eqref{eq.En.Est3y}
readily yield
\begin{equation}
\label{eq.En.Est4b}
   \| (- \Delta)^{\frac{k'}{2}} \mathbf{D}  u \|^2_{\mathbf{L}^2   }
 \leq
   c (k',\mathfrak{r})
   \Big( \| \nabla^{k'+1} u \|^2_{\mathbf{L}^{\frac{2 \mathfrak{r}}{\mathfrak{r}-2}}   }
         \| u \|^2_{\mathbf{L}^{\mathfrak{r}}   }
       + \| u \|^2_{\mathbf{L}^{2}   }
   \Big)
\end{equation}
with a constant $c (k',\mathfrak{r})$ independent on $u$.

Now, if
   $\mathfrak{s} = 2$ and
   $\mathfrak{r} = +\infty$,
then, obviously, we get
\begin{equation}
\label{eq.En.Est3uuu}
   c (k',\mathfrak{r})\,
   \| \nabla^{k'+1} u \|^2_{\mathbf{L}^{\frac{2 \mathfrak{r}}{\mathfrak{r}-2}}   }
   \| u \|^2_{\mathbf{L}^{\mathfrak{r}}   }
 = c (k',\mathfrak{r})\,
   \| \nabla^{k'+1} u \|^2_{\mathbf{L}^{2}   }
   \| u \|^2_{\mathbf{L}^{\infty}   }.
\end{equation}
If
   $\mathfrak{s} > 2$ and
   $n < \mathfrak{r} < \infty$,
then we may again apply inequality \eqref{eq.L-G-N} with
   $j_0 = 0$,
   $k_0 = 1$,
   $q_0 = r_0 = 2$,
   $0 < a = n / \mathfrak{r} < 1$
and
   $p_0 = 2 \mathfrak{r}/(\mathfrak{r}-2)$
to achieve
\begin{equation}
\label{eq.L-G-N.gamma}
   \| \nabla^{k'+1} u \|_{\mathbf{L}^{\frac{2 \mathfrak{r}}{\mathfrak{r}-2}}   }
   \| u \|_{\mathbf{L}^{\mathfrak{r}}   }
 \leq
   c (\mathfrak{r})
   \Big( \| \nabla^{k'+2} u \|^{\frac{n}{\mathfrak{r}}}_{\mathbf{L}^2   }\,
   \| \nabla^{k'+1} u \|^{\frac{\mathfrak{r}-n}{\mathfrak{r}}}_{\mathbf{L}^2   }
 + \| u \|_{\mathbf{L}^{2}   }
   \Big)
   \| u \|_{\mathbf{L}^{\mathfrak{r}}   }
\end{equation}
with an appropriate Gagliardo-Nirenberg constant $c (\mathfrak{r})$ independent of $u$.

Since
$
   \displaystyle
   \mathfrak{s} = \frac{2 \mathfrak{r}}{\mathfrak{r}-n},
$
it follows from \eqref{eq.L-G-N.gamma} that
\begin{eqnarray}
\label{eq.En.Est3bbbb}
\lefteqn {
   c (k',\mathfrak{r})
   \| \nabla^{k'+1} u \|^2_{\mathbf{L}^{\frac{2 \mathfrak{r}}{\mathfrak{r}-2}}   }
   \| u \|^2_{\mathbf{L}^{\mathfrak{r}}   }
}
\nonumber
\\
 & \leq &
   2 c (k',\mathfrak{r})
   \Big( \| \nabla^{k'+2} u \|^{\frac{2n}{\mathfrak{r}}}_{\mathbf{L}^2   }\,
         \| \nabla^{k'+1} u \|^{\frac{2 (\mathfrak{r}-n)}{\mathfrak{r}}}_{\mathbf{L}^2   }\,
         \| u \|^2_{\mathbf{L}^{\mathfrak{r}}   }
       + \| u \|^2_{\mathbf{L}^{2}   }
         \| u \|^2_{\mathbf{L}^{\mathfrak{r}}    }
   \Big)
\nonumber
\\
 & \leq &
   \varepsilon\, \| \nabla^{k'+2} u \|^2_{\mathbf{L}^2   }
 + \frac{c (k',\mathfrak{r})}{\varepsilon} \| \nabla^{k'+1} u \|^{2}_{\mathbf{L}^2   }\,
                                          \| u \|^\mathfrak{s}_{\mathbf{L}^{\mathfrak{r}}    }
 + 2 c (k',\mathfrak{r})\, \| u \|^2_{\mathbf{L}^{2}   }
                          \| u \|^{2}_{\mathbf{L}^{\mathfrak{r}}    }
\end{eqnarray}
with some positive constants independent of $u$ because of Young's inequality 
\eqref{eq.Young} applied with    $p_1 = \mathfrak{r} / n$  and  
$p_2 = \mathfrak{r} / (\mathfrak{r}-n)$. Now, inequalities
   \eqref{eq.En.Est4b},
   \eqref{eq.En.Est3uuu} and
   \eqref{eq.En.Est3bbbb}
imply \eqref{eq.En.Est3bbb} for all
   $n < \mathfrak{r} \leq \infty$ and
   $2 \leq \mathfrak{s} = 2 \mathfrak{r} / (\mathfrak{r}-n) < \infty$,
as desired.
\end{proof}

We now introduce for $k \geq 1$ the following seminorm:
\begin{equation*}
   \| (f,u_0) \|_{k,\mu,T}
 = \Big( \| \nabla^k u_0 \|^2_{\mathbf{L}^2   }
       + 4 \mu ^{-1} \| \nabla^{k-1} f \|^2_{L^2 (I, \mathbf{L}^2   )} \Big)^{1/2}.
\end{equation*}

\begin{lemma}
\label{t.En.Est.g.2}
Let
   $k \in \mathbb{Z}_+$, $k+2>n/2$, and the pair 
   $\mathfrak{s}$, $\mathfrak{r}$ satisfy \eqref{eq.s.r}.
If
   $(u,p) \in B^{k,2,1}_\mathrm{vel,a} (I) \times B^{k+1,0,0}_\mathrm{pre,a} (I)$
is a solution to the Navier-Stokes type equations \eqref{eq.NS.map}  
corresponding to data
   $(f,u_0)$ in $B^{k,0,0}_\mathrm{for} (I) \times {\mathcal H}^{(a)}_{k+2}$
then
\begin{equation}
\label{eq.En.Est4}
   \| u \|_{j+1,\mu,T}
  \leq 
   c_j ( (f,u_0), u), \,\, 
   \| \nabla^{j} \mathbf{D} u \|_{L^2 (I, \mathbf{L}^2   )}
  \leq 
   c_j ( (f,u_0), u),
\end{equation}
$$
   \| \nabla^{j} \partial_t u \|^2_{L^2 (I, \mathbf{L}^2   )}
 + a^2 \| \nabla^{j+1} p \|^2_{L^2 (I, \mathbf{L}^2   )}
  \leq    c_j ( (f,u_0), u),
$$
for all $0 \leq j \leq k+1$, where the constants on the right-hand side depend on the norms 
   $\| (f,u_0) \|_{0,\mu,T}$,
   $\| (f,u_0) \|_{j+1,\mu,T}$ and
   $\| u \|_{L^\mathfrak{s} (I, \mathbf{L}^\mathfrak{r}   )}$
and need not be the same in diverse applications.
\end{lemma}

It is worth pointing out that the constants on the right-hand side of \eqref{eq.En.Est4} may  
also depend on $\mathfrak{s}$, $\mathfrak{r}$, $T$, $\mu$, etc., but we do not display this 
dependence in notation.

\begin{proof}
We first recall that
   $u \in C (I, \mathbf{H}^{k+2}   ) \cap L^2 (I, \mathbf{H}^{k+3}   )$,
   $u_0 \in \mathbf{H}^{k+2}   $
and
   $\nabla p, f \in C (I, \mathbf{H}^k   ) \cap L^2 (I, \mathbf{H}^{k+1}   )$
under the hypotheses of the lemma. 
Next, we see that in the sense of distributions we have
\begin{equation}
\label{eq.du.solution}
\left\{
\begin{array}{rclcl}
   (- \Delta)^{\frac{j}{2}}
   \Big(\partial_t - \mu \Delta  u 
	+ \mathbf{D} u + a\, \nabla p \Big)
 & =
 & (- \Delta)^{\frac{j}{2}} f
 & \mbox{in}
 & \mathbb{T}^n \times (0,T),
\\
   (- \Delta)^{\frac{j}{2}} u (x,0)
 & =
 & (- \Delta)^{\frac{j}{2}} u_0 (x)
 & \mbox{for}
 & x \in \mathbb{T}^n
\end{array}
\right.
\end{equation}
 for all $0 \leq j \leq k+1$, if $(u,p)$ 
is a solution to \eqref{eq.NS.map}.

Integration by parts and Remark \ref{r.Delta.r} yield
\begin{equation}
\label{eq.by.parts.3}
 -  ( (- \Delta)^{\frac{j+2}{2}} u, (- \Delta)^{\frac{j}{2}} u)_{\mathbf{L}^2   }
 = \| (- \Delta)^{\frac{j+1}{2}} u \|^2_{\mathbf{L}^2   }
 = \| \nabla^{j+1} u \|^2_{\mathbf{L}^2   },
\end{equation}
and similarly
\begin{equation}
\label{eq.dt.k}
   2\, (\partial_t (- \Delta)^{\frac{j}{2}} u, (- \Delta)^{\frac{j+2}{2}} u)_{\mathbf{L}^2   }
 = \frac{d}{dt}\, \| \nabla^{j+1} u \|^2_{\mathbf{L}^2   },
\end{equation}
cf. Lemma \ref{l.Lions}. Furthermore, as
   $\mathrm{rot}\, \nabla u = 0$ and
   $a \, \mathrm{div}\, u =0$
in ${\mathbb T}^n \times [0,T]$, we conclude that for all $t \in [0,T]$ 
\begin{equation*}
  a\, ((- \Delta)^{\frac{j}{2}} \nabla p (\cdot,t), (- \Delta)^{\frac{j+2}{2}} u (\cdot,t))_{\mathbf{L}^2   }
 = 
	\end{equation*}
$$
  a\, \lim_{i \to \infty}
   ((- \Delta)^{\frac{j}{2}} \nabla p_i (\cdot,t),
    (\mathrm{rot})^\ast \mathrm{rot}\, (- \Delta)^{\frac{j}{2}} u (\cdot,t))_{\mathbf{L}^2   }
  = 
$$
	\begin{equation}
\label{eq.by.parts.3p}
  a\, \lim_{i \to \infty}
   ((- \Delta)^{\frac{j}{2}} \mathrm{rot}\, \nabla p_i (\cdot,t),
    \mathrm{rot}\, (- \Delta)^{\frac{j}{2}} u (\cdot,t))_{\mathbf{L}^2   } =0,
\end{equation}
 where
   $p_i (\cdot,t) \in H^{j+2} $
is any sequence approximating $p (\cdot, t)$ in $H^{j+1} $.

On combining
   \eqref{eq.du.solution},
   \eqref{eq.by.parts.3},
   \eqref{eq.dt.k} and
   \eqref{eq.by.parts.3p}
we get
\begin{equation}
\label{eq.by.parts.22}
   2\,
   ( (- \Delta)^{\frac{j}{2}} (\partial_t - \mu \Delta u 
	+ \mathbf{D} u + a\, \nabla p) (\cdot,t),
     (- \Delta)^{\frac{j+2}{2}} u (\cdot, t)
   )_{\mathbf{L}^2   } =
\end{equation}
\begin{equation*}
   \frac{d}{dt} \| \nabla^{j\!+\!1} u (\cdot,t) \|^2_{\mathbf{L}^2   } 
 + 2 \mu \| \nabla^{j\!+\!2} u (\cdot,t) \|^2_{\mathbf{L}^2   } +
 2 ( (- \Delta)^{\frac{j}{2}} \mathbf{D} u (\cdot,t), (- \Delta)^{\frac{j\!+\!2}{2}} u 
(\cdot, t) )_{\mathbf{L}^2   } 
\end{equation*}
for all $0 \leq j \leq k+1$.
Next, according to the H\"older inequality, we get
\begin{equation}
\label{eq.En.Est3b}
   2 |( (- \Delta)^{\frac{j}{2}} \mathbf{D}  u, (- \Delta)^{\frac{j+2}{2}} u )_{\mathbf{L}^2   }|
 \leq
   \frac{2}{\mu}\, \| (- \Delta)^{\frac{j}{2}} \mathbf{D} u \|^2_{\mathbf{L}^2   }
 + \frac{\mu}{2}\, \| (- \Delta)^{\frac{j+2}{2}} u (\cdot,t) \|_{\mathbf{L}^2   },
\end{equation}
$$
   2\, ( (- \Delta)^{\frac{j}{2}} f (\cdot,t), (- \Delta)^{\frac{j+2}{2}} u (\cdot,t)
       )_{\mathbf{L}^{2}   }
 \leq 
   2\,  \| (- \Delta)^{\frac{j}{2}} f (\cdot,t) \|_{\mathbf{L}^{2}   }
        \| (- \Delta)^{\frac{j+2}{2}} u (\cdot,t) \|_{\mathbf{L}^{2}   }
\leq 
$$
\begin{equation}
\label{eq.En.Est3a}
   \frac{4}{\mu}\, \| (- \Delta)^{\frac{j}{2}} f (\cdot,t) \|^2_{\mathbf{L}^{2}   }
 + \frac{\mu}{4}\, \| (- \Delta)^{\frac{j+2}{2}} u (\cdot,t) \|^2_{\mathbf{L}^{2}   }
\end{equation}
for all $t \in [0,T]$.
By the H\"older inequality with
$
   \displaystyle
   q_1 = \frac{\mathfrak{r}}{n}
$
and
$
   \displaystyle
   q_2 = \frac{\mathfrak{r}}{\mathfrak{r}-n},
$
\begin{equation}
\label{eq.En.Est3x00}
   \int_0^t
   \| u (\cdot,s) \|^{2}_{\mathbf{L}^{2}   }
   \| u (\cdot,s) \|^{2}_{\mathbf{L}^{\mathfrak{r}}   }
   ds
 \leq
   \| u \|^{2}_{L^{\frac{2 \mathfrak{r}}{n} } ([0,t], \mathbf{L}^2   )}
   \| u \|^{2}_{L^\mathfrak{s} ([0,t], \mathbf{L}^{\mathfrak{r}}   )}.
\end{equation}
On summarising inequalities
   \eqref{eq.du.solution},
   \eqref{eq.by.parts.22},
   \eqref{eq.En.Est3b},
   \eqref{eq.En.Est3bbb},
   \eqref{eq.En.Est3a}   and 
\eqref{eq.En.Est3x00} 
we immediately obtain
\begin{eqnarray}
\label{eq.En.Est3x0}
\lefteqn{
   \| \nabla^{j+1} u (\cdot,t) \|^2_{\mathbf{L}^2   }
 + \mu \int_0^{t} \| \nabla^{j+2} u (\cdot,s) \|^2_{\mathbf{L}^{2}   } ds
}
\nonumber
\\
 & \leq &
   \| \nabla^{j+1} u_0 \|^2_{\mathbf{L}^{2}   }
 + \frac{4}{\mu} \| \nabla^{j} f \|^2_{L^2 (I,\mathbf{L}^2   )}
 + c (j, \mathfrak{s}, \mathfrak{r})
   \| u \|^{2}_{L^{\frac{2 \mathfrak{r}}{n}} ([0,t], \mathbf{L}^2   )}
   \| u \|^{2}_{L^\mathfrak{s} ([0,t],\mathbf{L}^{\mathfrak{r}}   )}
\nonumber
\\
 & + &
   c (j, \mathfrak{s}, \mathfrak{r})
   \frac{1}{\mu}
   \int_0^t \| u (\cdot,s) \|^{\mathfrak{s}}_{\mathbf{L}^{\mathfrak{r}}   }
            \| \nabla^{j+1} u (\cdot,s) \|^2_{\mathbf{L}^2   } ds
 + c (j, \mathfrak{s}, \mathfrak{r})
   \| u \|^{2}_{\mathbf{L}^{2}   }
\end{eqnarray}
for all $t \in [0,T]$.
It is worth to be mentioned that the constants need not be the same in diverse applications.
By
   \eqref{eq.En.Est2},
   \eqref{eq.En.Est2add}
and
   \eqref{eq.En.Est3x0},
given any $0 \leq j \leq k+1$, we get the following estimate for all $t \in I$: 
\begin{equation}
\label{eq.En.Est3x}
   \| \nabla^{j+1} u (\cdot,t) \|^2_{\mathbf{L}^2   }
 + \mu \int_0^{t} \| \nabla^{j+2} u (\cdot,s) \|^2_{\mathbf{L}^{2}   } ds
 \leq 
\end{equation}
\begin{equation*}
   \| (f,u_0) \|^2_{j+1,\mu,T}
 + c (j, \mathfrak{s}, \mathfrak{r}) T^{\frac{3}{\mathfrak{r}}}
   \| (f,u_0) \|^2_{0,\mu,T}
   \| u \|^{2}_{L^\mathfrak{s} ([0,t],\mathbf{L}^{\mathfrak{r}}   )}
 + 
\end{equation*}
\begin{equation*}
   c (j, \mathfrak{s}, \mathfrak{r}) \frac{1}{\mu}
   \int_0^t \| u (\cdot,s) \|^{\mathfrak{s}}_{\mathbf{L}^{\mathfrak{r}}   }
            \| \nabla^{j+1} u (\cdot,s) \|^2_{\mathbf{L}^2    } ds
 + c (j, \mathfrak{s}, \mathfrak{r}) T\, \| (f,u_0) \|^2_{0,\mu,T}.
\end{equation*}

On applying Gronwall's type Lemma \ref{l.Perov} 
to \eqref{eq.En.Est3x} with $\gamma_0 =1$, 
\begin{equation*}
   \mathfrak{A}   = 
   \| (f,u_0) \|^2_{j+1,\mu,T}
 + \left( c (j, \mathfrak{s}, \mathfrak{r}) T^{\frac{3}{\mathfrak{r}}}
          \| u \|^{2}_{L^\mathfrak{s} ([0,t],\mathbf{L}^{\mathfrak{r}}   )}
        + c (j, \mathfrak{s}, \mathfrak{r}) T
   \right)
   \| (f,u_0) \|^2_{0,\mu,T},
\end{equation*}
\begin{equation*}   \mathfrak{F} (t)
  =   \| \nabla^{j+1} u (\cdot,t) \|^2_{\mathbf{L}^2   },
\,\, 
   B (t)
  = 
   c (j, \mathfrak{s}, \mathfrak{r}) \frac{1}{\mu}
   \| u (\cdot,t) \|^{\mathfrak{s}}_{\mathbf{L}^{\mathfrak{r}}   }
\end{equation*}
we conclude that,
   for all $t \in [0,T]$ and $0 \leq j \leq k+1$,
\begin{equation}
\label{eq.d.sup}
   \| \nabla^{j+1} u (\cdot,t) \|^2_{\mathbf{L}^2   }
 \leq
   c (j, \mathfrak{s}, \mathfrak{r}, T, \mu, (f,u_0))
   \exp
   \Big(
   c (j, \mathfrak{s}, \mathfrak{r}) \frac{1}{\mu}
   \int_0^t
   \| u (\cdot,s) \|^{\mathfrak{s}}_{\mathbf{L}^{\mathfrak{r}}   }
   ds
   \Big)
\end{equation}
with a positive constant $c (j, \mathfrak{s}, \mathfrak{r}, T, \mu, (f,u_0))$ independent of $u$.
Obviously, \eqref{eq.En.Est3x} and \eqref{eq.d.sup} imply the first estimate of \eqref{eq.En.Est4}.

Next, applying \eqref{eq.En.Est3bbb} and \eqref{eq.En.Est3x00} we see that
\begin{equation*}
   \| (- \Delta)^{\frac{j}{2}} \mathbf{D} u \|^2_{L^2 ([0,t], \mathbf{L}^2   )}
 \leq 
\end{equation*}
\begin{equation*}
   \| \nabla^{j+2} u \|^2_{L^2 ([0,t], \mathbf{L}^2   )}
 + c (j, \mathfrak{s}, \mathfrak{r},\varepsilon\!=\!1)\,
   \| u \|^{\mathfrak{s}}_{L^\mathfrak{s} ([0,t],\mathbf{L}^{\mathfrak{r}}   )}
   \| \nabla^{j+1} u \|^2_{C ([0,t], \mathbf{L}^2   )}  + 
  \end{equation*}
	\begin{equation*}
    2 c (j, \mathfrak{r})\,\| u \|^{2}_{L^{\frac{2 \mathfrak{r}}{3}} ([0,t], \mathbf{L}^2   )}
   \| u \|^{2}_{L^\mathfrak{s} ([0,t], \mathbf{L}^{\mathfrak{r}}   )}
 + 2 c (j, \mathfrak{r})\, \| u \|^{2}_{L^{2} ([0,t], \mathbf{L}^2   )},
\end{equation*}
the constants being independent of $u$.
So, the second estimate of \eqref{eq.En.Est4} follows from
   \eqref{eq.En.Est2} and
   \eqref{eq.En.Est4}.

We are now ready to establish the desired estimates on $\partial_t u$ and $p$.
Indeed, since $a\, \mathrm{div}\, u = 0$, we get
\begin{equation}
\label{eq.dtu+dp}
   \| (- \Delta)^{\frac{j}{2}} (\partial_t u + a\, \nabla p) \|^2_{\mathbf{L}^{2}   }
 = \| \nabla^{j} \partial_t u \|^2_{\mathbf{L}^{2}   }
 +a^2 \, \| \nabla^{j+1} p \|^2_{\mathbf{L}^{2}   }
\end{equation}
for all $j$ satisfying $0 \leq j \leq k+1$.
From \eqref{eq.du.solution} it follows that
\begin{eqnarray}
\label{eq.p+u}
\lefteqn{
   \frac{1}{2}\,
   \| (- \Delta)^{\frac{j}{2}} (\partial_t u + a\, \nabla p) \|^2_{L^2 (I,\mathbf{L}^2   )}
}
\nonumber
\\
 & \leq &
   \| \nabla^{j} f \|^2_{L^2 (I,\mathbf{L}^2   )}
 + \mu\, \| \nabla^{j+2} u \|^2_{L^2 (I, \mathbf{L}^2   )}
 + \| (- \Delta)^{\frac{j}{2}} \mathbf{D} u \|^2_{L^2 (I, \mathbf{L}^2   )}
\end{eqnarray}
for all $0 \leq j \leq k+1$.
Therefore, the third estimate of \eqref{eq.En.Est4} follows from
   the first and second estimates of \eqref{eq.En.Est4},
   \eqref{eq.dtu+dp} and
   \eqref{eq.p+u},
showing the lemma.
\end{proof}

Clearly, we may obtain additional information on $\partial_t u$ and $p$.

\begin{lemma}
\label{c.En.Est.g.k1}
Under the hypotheses of Lemma \ref{t.En.Est.g.2},
\begin{equation}
\label{eq.En.EstD.C}
\begin{array}{rcl}
   \| \nabla^{j} \mathbf{D} u \|_{C (I, \mathbf{L}^2   )}
 & \leq &
   c_j ( (f,u_0), u),
\\
   \| \nabla^{j} \partial_t u \|^2_{C (I, \mathbf{L}^2   )}
 + a^2 \, \| \nabla^{j+1} p \|^2_{C (I, \mathbf{L}^2   )}
 & \leq &
   c_j ( (f,u_0), u)
\end{array}
\end{equation}
for all $0 \leq j \leq k$, with a positive constant $c_j ( (f,u_0), u)$ depending on the
norms
   $\| (f,u_0) \|_{0,\mu,T}, \ldots, \| (f,u_0) \|_{k+2,\mu,T}$,
   $\| \nabla^{j} f \|_{C (I, \mathbf{L}^{2}   )}$
and
   $\| u \|_{L^\mathfrak{s} (I, \mathbf{L}^\mathfrak{r}   )}$.
\end{lemma}

As mentioned, the constants on the right-hand side of \eqref{eq.En.EstD.C} may also depend
on $\mathfrak{s}$, $\mathfrak{r}$, $T$, 
$\mu$, etc., but we do not display this dependence
in notation.

\begin{proof}
Using \eqref{eq.du.solution}, we get
\begin{equation}
\label{eq.En.Est.41.tp+}
   \sup_{t \in [0,T]}
   \| (- \Delta)^{\frac{j}{2}} (\partial_t u + \nabla p) (\cdot, t) \|^2_{\mathbf{L}^{2}   }
 \leq 
   \sup_{t \in [0,T]}
   \| (- \Delta)^{\frac{j}{2}} (f + \mu \Delta u + \mathbf{D} u) (\cdot, t) \|^2_{\mathbf{L}^{2}   }
\leq 
\end{equation}
$$
   2
   \sup_{t \in [0,T]}
   \left( \| \nabla^{j} f (\cdot, t) \|^2_{\mathbf{L}^{2}   }
        + \| \nabla^{j+2} u (\cdot, t) \|^2_{\mathbf{L}^{2}   }
        + \| \nabla^{j} \mathbf{D} u (\cdot, t) \|^2_{\mathbf{L}^{2}   }
   \right)
$$
for all $0 \leq j \leq k$. The first two summands in the last line of \eqref{eq.En.Est.41.tp+} 
can be estimated via the data   $(f,u_0)$ and
   $\| u \|_{L^\mathfrak{s} (I,\mathbf{L}^\mathfrak{r}   )}$
using Lemma \ref{t.En.Est.g.2}.

On applying Lemma \ref{l.En.Est.Du.L2} to the third summand in \eqref{eq.En.Est.41.tp+} we 
see that
\begin{equation*}
   \| \nabla^j \mathbf{D} u \|^2_{C (I, \mathbf{L}^2   )}
\leq 
   \| \nabla^{j+2} u \|^2_{C (I, \mathbf{L}^2   )}
 + c (j, \mathfrak{s}, \mathfrak{r}, \varepsilon\!=\!1)\,
   \| u \|^{\mathfrak{s}}_{C (I, \mathbf{L}^{\mathfrak{r}}   )}
   \| \nabla^{j+1} u \|^2_{C (I, \mathbf{L}^2   )} + 
	\end{equation*}
\begin{equation}
\label{eq.Dk-2D}
    c (j, \mathfrak{s}, \mathfrak{r})\,
   \| u \|^{2}_{C (I, \mathbf{L}^2   )}
   \| u \|^{2}_{C (I, \mathbf{L}^{\mathfrak{r}}   )}
 + c (j, \mathfrak{s}, \mathfrak{r})\, \| u \|^{2}_{C (I, \mathbf{L}^2   )}
	\end{equation}
for all $0 \leq j \leq k$, the constants being independent of $u$. 
On the other hand, we may use the Sobolev embedding theorem (see for instance
   \cite[Ch.~4, Theorem 4.12]{Ad03} or
   \eqref{eq.Sob.index},
to conclude that for any $\lambda \in [0,1/2)$ there exists a constant $c (\lambda)$ independent of
$u$ and $t$,  such that
$$
    \| u (\cdot,t) \|_{\mathbf{C}^{0,\lambda}   }
 \leq
   c (\lambda)\,  \| u (\cdot,t) \|_{\mathbf{H}^{2+k}   }
$$
for all $t \in [0,T]$.
Then energy estimate \eqref{eq.En.Est2} and Lemma \ref{t.En.Est.g.2} imply immediately that
\begin{equation}
\label{eq.Sob.0}
   \sup_{t \in [0,T]} \|u (\cdot,t) \|_{\mathbf{C}^{0,\lambda}   }
 \leq
    c ((f,u_0), u),
\end{equation}
where the constant $c ((f,u_0), u)$ depends on
   $\| (f,u_0) \|_{j',\mu, T}$ with $j' = 0, \dots k+1$, 
and
   $\| u \|_{L^\mathfrak{s} (I, \mathbf{L}^\mathfrak{r}   )}$,
if inequality \eqref{eq.Sob.index} is fulfilled.
In particular,
\begin{equation}
\label{eq.CT.LsLr}
   \| u \|^{\mathfrak{s}}_{C (I, \mathbf{L}^{{\mathfrak r}}   )}
 \leq
   T \ell^{\frac{n \mathfrak{s}}{\mathfrak{r}}}
   \sup_{t \in [0,T]} \| u (\cdot,t) \|^{\mathfrak{s}}_{\mathbf{C}   }
 \leq
    T \ell^{\frac{n \mathfrak{s}}{\mathfrak{r}}}
    c ((f,u_0), u)
\end{equation}
with constant $c ((f,u_0), u)$ from \eqref{eq.Sob.0}. 
Hence, the first estimate of \eqref{eq.CT.LsLr} 
is fulfilled. 

At this point Lemma \ref{t.En.Est.g.2} and
   \eqref{eq.En.Est2add},
   \eqref{eq.En.Est.41.tp+},
   \eqref{eq.Dk-2D} and
   \eqref{eq.CT.LsLr}
allow us to conclude that
\begin{equation}
\label{eq.En.Est.41.tp++}
   \sup_{t \in [0,T]}
   \| (- \Delta)^{\frac{j}{2}} (\partial_t u + a\, \nabla p) (\cdot, t) \|^2_{\mathbf{L}^{2}   }
 \leq
   c (j, (f,u_0), u)
\end{equation}
for all $j = 0, 1, \ldots, k$, where $c (j, (f,u_0), u)$ is a positive constant depending on
   $\| (f,u_0) \|_{j',\mu,T}$ with $0 \leq j' \leq k+2$,
   $\| u \|_{L^\mathfrak{s} (I, \mathbf{L}^\mathfrak{r}   )}$
and
   $T$.
Hence, the second estimate of \eqref{eq.En.EstD.C} follows from
   \eqref{eq.dtu+dp} and
   \eqref{eq.En.Est.41.tp++}.
\end{proof}

Our next objective is to evaluate the higher derivatives of both $u$ and $p$ with respect to 
$x$ and $t$.

\begin{lemma}
\label{c.En.Est.g.ks}
Suppose that
   $s\in \mathbb{N}$, 
   $k \in \mathbb{Z}_+$, $2s+k>\frac{n}{2}$, 
and
   $\mathfrak{s}$, $\mathfrak{r}$ satisfy \eqref{eq.s.r}.
If
   $(u,p) \in B^{k,2s,s}_\mathrm{vel,a} (I) \times B^{k+1,2(s-1),s-1}_\mathrm{pre,a} (I)$
is a solution to the  Navier-Stokes type equations of \eqref{eq.NS.map}  
 with data
   $(f,u_0) \in B^{k,2(s-1),s-1}_\mathrm{for} (I) \times {\mathcal H}^{(a)}_{k+2s}$
then it is subjected to an estimate of the form
\begin{equation}
\label{eq.En.Est.Bks}
   \| (u,p) \|_{B^{k,2s,s}_\mathrm{vel,a} (I) \times B^{k+1,2(s-1),s-1}_\mathrm{pre,a} (I)}
 \leq
   c (k, s, (f,u_0), u),
\end{equation}
the constant on the right-hand side depending on
   $\| f \|_{B^{k,2(s-1),s-1}_\mathrm{for} (I)}$,
   $\| u_0 \|_{\mathbf{H}^{2s+k}}$
and
   $\| u \|_{L^\mathfrak{s} (I, \mathbf{L}^\mathfrak{r}   )}$
as well as on
   $\mathfrak{r}$, $\mathfrak{s}$, 	and $\mu$.
\end{lemma}

\begin{proof}
For $s=1$ and any $k \in \mathbb{Z}_+$, the statement of the lemma was proved in Lemmata
   \ref{t.En.Est.g.2} and
   \ref{c.En.Est.g.k1}.

Then the statement follows by induction with respect to $s$ from the recurrent formulas
\begin{equation}
\label{eq.recurrent}
\begin{array}{rcl}
   \partial^\alpha \partial^{j}_t (\partial_t u + a\, \nabla p)
 & =
 & \partial^\alpha \partial^{j}_t (f + \mu \Delta 
- \mathbf{D} u),
\\
   \| \partial^\alpha \partial^{j}_t (\partial_t u  + a\, \nabla p) \|^2_{\mathbf{L}^{2}   }
 & =
 & \| \partial^\alpha \partial^{j+1}_t u \|^2_{\mathbf{L}^{2}   }
 + a^2 \, \| \partial^\alpha \partial^{j}_t \nabla p \|^2_{\mathbf{L}^{2}   }
\end{array}
\end{equation}
provided that
   $a\, \mathrm{div}\, u = 0$
and
   $j \in \mathbb{Z}_+$,
   $\alpha \in \mathbb{Z}^n_+ $
are fit for the assumptions.

Indeed, suppose the assertion of the lemma is valid for $s = s_0$ and any $k \in \mathbb{Z}_+$ 
with $2s_0+k>\frac{n}{2}$. 
We then prove that it is fulfilled for $s = s_0+1$ and any $k \in {\mathbb Z}_+$ 
with $2s_0+k+2>\frac{n}{2}$. 
As
$
  (u,p)
  \in
  B^{k,2(s_0+1),s_0+1}_\mathrm{vel,a} (I) \times B^{k+1,2s_0,s_0}_\mathrm{pre,a} (I)$,
	$   (f,u_0)
  \in
  B^{k,2s_0,s_0}_\mathrm{for} (I) \times {\mathcal H}^{(a)}_{2(s_0+1)+k}
$, 
then, by Lemma \ref{l.emb.Bochner},
$
   (u,p)
  \in
  B^{k+2,2s_0,s_0}_\mathrm{vel,a} (I) \times B^{k+3,2(s_0-1),s_0-1}_\mathrm{pre,a} (I)$, 
$   (f,u_0)
  \in
  B^{k+2,2(s_0-1),s_0-1}_\mathrm{for} (I) \times V_{2s_0+(k+2)} 
$. 
Thus, $k+2>\frac{n}{2}-2s_0$ and, by the induction assumption,
\begin{equation}
\label{eq.En.Est.Bks.prime}
   \| (u,p) \|_{B^{k+2,2s_0,s_0}_\mathrm{vel,a} (I) \times B^{k+1,2(s_0-1),s_0-1}_\mathrm{pre,a} (I)}
 \leq
   c (k, s_0, (f,u_0), u),
\end{equation}
where the properties of the constant $c (k, s_0, (f,u_0), u)$ are similar to those described in
the statement of the lemma.

On the other hand, it follows from the first equality of \eqref{eq.recurrent} that for all 
suitable $j$ we get
\begin{equation*}
   \| \nabla^{j} \partial^{s_0}_t (\partial_t u +a\,  \nabla p) \|^2_{\mathbf{L}^{2}   }
 = 
   \| \nabla^{j} \partial^{s_0}_t (f + \mu \Delta 
	- \mathbf{D} u) \|^2_{\mathbf{L}^{2}   }
\leq 
\end{equation*}
\begin{equation}
\label{eq.est.ind.1}
   2
   \left( \| \nabla^{j} \partial^{s_0}_t f \|^2_{\mathbf{L}^{2}   }
 + \mu\, \| \nabla^{j+2} \partial^{s_0}_t u \|^2_{\mathbf{L}^{2}   }
 + \| \nabla^{j} \partial^{s_0}_t \mathbf{D} u \|^2_{\mathbf{L}^{2}   }
   \right).
\end{equation}
By the induction assumption, if
   $0 \leq j \leq k+1$ and
   $0 \leq i \leq k$,
then the norms
$
   \| \nabla^{j} \partial^{s_0}_t f \|^2_{L^2 (I, \mathbf{L}^{2}   )}
$
and
$
   \| \nabla^{i} \partial^{s_0}_t f \|^2_{C (I, \mathbf{L}^{2}   )}
$
are finite and
\begin{equation}
\label{eq.est.ind.2}
\begin{array}{rcl}
   \| \nabla^{j+2} \partial^{s_0}_t u \|^2_{L^2 (I, \mathbf{L}^{2}   )}
 & \leq
 & c\, \| u \|^2_{B^{k+2,2s_0,s_0}_\mathrm{vel,a} (I)},
\\
   \| \nabla^{i+2} \partial^{s_0}_t u \|^2_{C (I, \mathbf{L}^{2}   )}
 & \leq
 & c\, \| u \|^2_{B^{k+2,2s_0,s_0}_\mathrm{vel,a} (I)}
\end{array}
\end{equation}
with constants $c$ independent of $u$ and not necessarily the same in diverse applications.
Besides, \eqref{eq.B.pos.bound} with $w = u$ yield
\begin{equation}
\label{eq.est.ind.3}
\begin{array}{rcl}
   \| \nabla^{j} \partial^{s_0}_t \mathbf{D} u \|^2_{L^2 (I, \mathbf{L}^{2}   )}
 & \leq
 & c\, \| u \|^4_{B^{k+2,2s_0,s_0}_\mathrm{vel,a} (I)},
\\
   \| \nabla^{i} \partial^{s_0}_t \mathbf{D} u \|^2_{C (I, \mathbf{L}^{2}   )}
 & \leq
 & c\, \| u \|^4_{B^{k+2,2s_0,s_0}_\mathrm{vel,a} (I)}
\end{array}
\end{equation}
provided
   $0 \leq j \leq k+1$ and
   $0 \leq i \leq k$,
the constants being independent of $u$.

Finally, combining
   \eqref{eq.En.Est.Bks.prime},
   \eqref{eq.est.ind.1},
   \eqref{eq.est.ind.2},
   \eqref{eq.est.ind.3}
with the second equality of \eqref{eq.recurrent}, we conclude that
$$
   \| (u,p) \|_{B^{k,2(s_0+1),s_0+1}_\mathrm{vel,a} (I) \times B^{k+1,2s_0,s_0}_\mathrm{pre,a} (I)}
 \leq
   c (k, s_0+1, (f,u_0), u),
$$
where the constant on the right-hand side depends on
   $\| f \|_{B^{k,2s_0,s_0}_\mathrm{for} (I)}$,
   $\| u_0 \|_{\mathbf{H}^{2(s_0+1)+k}}$
and
   $\| u \|_{L^\mathfrak{s} (I, \mathbf{L}^\mathfrak{r}   )}$
as well as on
   $\mathfrak{r}$, $T$, $\mu$, etc.
This proves the lemma.
\end{proof}

Keeping in mind Corollary \ref{c.clopen}, we are now in a position to show that the range of 
mapping \eqref{eq.map.A} is closed  if given subset   $S = S_\mathrm{vel,a} \times 
S_\mathrm{pre,a}$ of the product
   $B^{k,2s,s}_\mathrm{vel,a} (I) \times B^{k+1,2(s-1),s-1}_\mathrm{pre,a} (I)$ 
such that the image $\mathcal{A}_a (S)$ is precompact in the space
   $B^{k,2(s-1),s-1}_\mathrm{for} (I) \times {\mathcal H}^{(a)}_{2s+k}$,
the set $S_\mathrm{vel,a}$ is bounded in the space
   $L^{\mathfrak{s}} (I,\mathbf{L}^{\mathfrak{r}}   )$ with 
	a pair $\mathfrak{s}$, $\mathfrak{r}$ satisfying \eqref{eq.s.r}.

Indeed, let a pair
    $(f,u_0) \in B^{k,2(s-1),s-1}_\mathrm{for} (I) \times {\mathcal H}^{(a)}_{2s+k}$
belong to the closure of the range of values of the mapping $\mathcal{A}_a$.
Then there is a sequence $\{ (u_i,p_i) \}$ in
    $B^{k,2s,s}_\mathrm{vel,a} (I) \times B^{k+1,2(s-1),s-1}_\mathrm{pre,a} (I)$
such that the sequence
   $\{ (f_i, u_{i,0}) = \mathcal{A}_a (u_i, p_i) \}$
converges to  $(f,u_0)$ in the space $B^{k,2(s-1),s-1}_\mathrm{for} (I) \times 
{\mathcal H}^{(a)}_{2s+k}$. Consider the set $S = \{ (u_i, p_i)\}$ from the inverse image of $\mathcal{A}_a$. 
As the image $\mathcal{A} (S) = \{ (f_i, u_{i,0}) \}$ is precompact in
   $B^{k,2(s-1),s-1}_\mathrm{for} (I) \times {\mathcal H}^{(a)}_{2s+k}$,
it follows from our assumption that the subset
   $S_\mathrm{vel,a} = \{ u_i \}$ of
   $B^{k,2s,s}_\mathrm{vel,a} (I)$
is bounded in the space  
$L^\mathfrak{s} (I, \mathbf{L}^{\mathfrak{r}}   )$. 

\begin{lemma}\label{l.closure}
Let a pair
    $(f,u_0) \in B^{k,2(s-1),s-1}_\mathrm{for} (I) \times {\mathcal H}^{(a)}_{2s+k}$
belong to the closure of the range of values of the mapping $\mathcal{A}_a$.
If the corresponding sequence $\{ u_i \}$ from 
    $B^{k,2s,s}_\mathrm{vel,a} (I) $ has a subsequence 
bounded in 
   $L^{\mathfrak{s}} (I,\mathbf{L}^{\mathfrak{r}}   )$ with 
$\mathfrak{s}$, $\mathfrak{r}$ satisfying \eqref{eq.s.r} then $(f,u_0)$  
belongs to the range of the mapping $\mathcal{A}_a$.
\end{lemma}

\begin{proof} 
We may replace the sequence by its bounded subsequence. Then, on 
applying Lemmata \ref{p.En.Est.u.strong} and \ref{c.En.Est.g.ks} we conclude that the 
sequence   $\{ (u_i, p_i) \}$
is bounded in the space
   $ B^{k,2s,s}_\mathrm{vel,a} (I) \times B^{k+1,2(s-1),s-1}_\mathrm{pre,a} (I)$.
By the definition of $B^{k,2s,s}_\mathrm{vel,a} (I)$, the sequence $\{ u_i \} $ is bounded in
   $C (I, \mathbf{H}^{k+2s}   )$ and
   $L^2 (I, \mathbf{H}^{k+2s+1}   )$,
and the partial derivatives $\{ \partial_t^j u_i \}$ in time with $1 \leq j \leq s$ are 
bounded in
   $C (I, \mathbf{H}^{k+2(s-j)}   )$ and
   $L^2 (I, \mathbf{H}^{k+2(s-j+1)}   )$.
Therefore, there is a subsequence $\{ u_{i_m} \}$ such that 1) 
the sequence    $\{ \partial^{\alpha+\beta}_x \partial_t^j u_{i_m} \}$
converges weakly in $L^2 (I, \mathbf{L}^{2}   )$ if 
   $|\alpha| + 2j \leq 2s$ and
   $|\beta| \leq k+1$; 2) the sequence
   $\{ \partial^{\alpha+\beta}_x \partial_t^j u_{i_m} \}$
converges weakly-$^\ast$ in $L^\infty (I, \mathbf{L}^{2}   )$ provided that
   $|\alpha| + 2j \leq 2s$ and   $|\beta| \leq k$.
	
It is clear that the limit $u$ of  $\{ u_{i_m} \}$ is a weak solution 
to \eqref{eq.NS}, i.e it satisfy 
\begin{equation}
\label{eq.NS.weak}
\left\{
   \begin{array}{rcl}
   \displaystyle
   \frac{d}{dt} (u,v)_{\mathbf{L}^2   }
 + \mu  (\nabla u, \nabla v)_{\mathbf{L}^2   } 
 & =
 & \langle f - \mathbf{D} (u), v \rangle,
\\
   u (\cdot,0)
 & =
 & u_0
   \end{array}
\right.
\end{equation}
for all $v \in  {\mathcal H}^{(a)}_{1}$. Moreover $u$ has the following properties: 
1) each derivative
   $\partial^{\alpha+\beta}_x \partial_t^j u$
belongs to $L^2 (I, {\mathcal H}^{(a)}_0)$ if 
   $|\alpha| + 2j \leq 2s$ and
   $|\beta| \leq k+1$; 2) each derivative
   $\partial^{\alpha+\beta}_x \partial_t^j u$
belongs to $L^\infty (I, {\mathcal H}^{(a)}_{0})$ provided that
   $|\alpha| + 2j \leq 2s$ and
   $|\beta| \leq k$.

Hence, it satisfies \eqref{eq.NS.weak.u} and, as we have seen in the proof 
of Theorem \ref{t.open.NS.short} such a solution is unique. In addition, if 
\begin{equation}
\label{eq.0js-1}
   0 \, \leq \, j \leq
  s-1,
\,\,    |\alpha| + 2j \leq  2s,
\,\,    |\beta|
  \leq
  k,
\end{equation}
then
   $\partial^{\alpha+\beta}_x \partial_t^j u \in  L^2 (I, {\mathcal H}^{(a)}_1)$ and
   $\partial^{\alpha+\beta}_x \partial_t^{j+1} u \in L^2 (I, ({\mathcal H}^{(a)}_1)')$.
Applying Lemma \ref{l.Lions} we readily conclude that
   $\partial^{\alpha+\beta}_x  \partial_t^j u \in C (I, {\mathcal H}^{(a)}_0)$
for all $j$ and $\alpha$, $\beta$ satisfying (\ref{eq.0js-1}). 
Hence  it follows that $u$ belongs to the space
   $B^{k+2,2(s-1),s-1}_\mathrm{vel,a} (I)$.
Moreover, using formula \eqref{eq.B.pos.bound}
with $w=u$ implies that the derivatives
   $\partial^{\alpha+\beta}_x \partial_t^j \mathbf{D} u$
belong to $C (I, \mathbf{L}^2   )$ for all $j$ and $\alpha$, $\beta$ which satisfy
inequalities (\ref{eq.0js-1}).

Denoting by $\mathbf{I}$ the identity operator $\mathbf{I}: \mathbf{L}^2 \to \mathbf{L}^2$
we set 
$$
\mathbf{P}_a = 
\left\{ 
\begin{array}{lll}
\mathbf{I}, & \mbox{ if } & a=0, \\
\mathbf{P} , & \mbox{ if } & a=1. \\
\end{array}
\right.
$$
By the construction,  the operator $\mathbf{P}_a:$ maps
   $C (I,\mathbf{L}^2   )$ continuously into
   $C (I,\mathbf{L}^2   )$.
Therefore, since $u$ is a solution to \eqref{eq.NS.weak} we deduce that
$$
   \partial^{\beta}_x \partial_t^s  u
 = \partial^{\beta}_x \partial_t^{s-1} \mu \Delta  u 
 - \partial^{\beta}_x \partial_t^{s-1} \mathbf{P}_a \mathbf{D} u
 + \partial^{\beta}_x \partial_t^{s-1} \mathbf{P}_a f
$$
belongs to $C (I,{\mathcal H}^{(a)}_0)$ for all multi-indices $\beta$ such that $|\beta| \leq k$.
In other words, $u$ lies in
   $B^{k,2s,s}_\mathrm{vel,a} (I)$. 
	
	If $a=1$ we still have to recover the `pressure' $p$. 
With this purpose, applying Proposition \ref{c.Sob.d}, 
we conclude that there is $p \in B^{k+1,2(s-1),s-1}_\mathrm{pre,1} (I)$
such that 
$$
\nabla p
 = (I - \mathbf{P}) (f - \mathbf{D} u),
$$
i.e. the pair $(u,p) \in B^{k,2s,s}_\mathrm{vel,1} (I)
\times B^{k+1,2(s-1),s-1}_\mathrm{pre,1} (I)$  is a solution to \eqref{eq.NS.map} 
related to the pair $(f,u_0)$.
\end{proof}

Thus, it follows from Lemma \ref{l.closure}
that the image of the mapping in \eqref{eq.map.A} is closed 
under the assumptions of the present theorem. 
Then the statement of the theorem related to the surjectivity of the mapping follows from
    Corollary \ref{c.clopen}.
\end{proof}

\section{Existence theorems } 
\label{s.NS.ET}

As it is well known, for $n=2$ the Navier-Stokes equations admits unique 
regular solutions for regular data, see, for instance, \cite[Ch. VI, \S 4, Theorem 3]{Lady70} 
or \cite[Ch. 3, \S 3, Theorems 3.2, 3.5, 3.6]{Tema79} or \cite[P. I, \S 3, 
Theorem 3.2]{Tema95} in the periodic 
setting.  As expected, this is the case for \eqref{eq.NS} in the function spaces 
under the considerations.

\begin{proposition}
\label{p.exist.Bohcher.n=2}
Let $n=2$,    $k \in \mathbb{Z}_+$,    $s \in \mathbb{N}$, 
	and  non-linearity \eqref{eq.non-linearity} satisfy \eqref{eq.trilinear}.
Then for each $T > 0$ mapping \eqref{eq.map.A} generated by the Navier-Stokes
 type equations \eqref{eq.NS} is a homeomorphism.
\end{proposition}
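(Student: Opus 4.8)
The plan is to exploit the fact that by Theorem \ref{t.open.NS.short} the mapping $\mathcal{A}_a$ is already injective, continuous and open; hence it is a homeomorphism onto its range, and it remains only to prove that this range is the entire destination space, i.e. that $\mathcal{A}_a$ is surjective. To this end I would invoke the surjectivity criterion of Theorem \ref{t.sr}, whose hypothesis \eqref{eq.trilinear} is granted here. Concretely, one must verify that whenever $S = S_\mathrm{vel,a} \times S_\mathrm{pre,a}$ is such that $\mathcal{A}_a(S)$ is precompact, the velocity component $S_\mathrm{vel,a}$ is bounded in $L^{\mathfrak{s}}(I, \mathbf{L}^{\mathfrak{r}})$ for some admissible pair $(\mathfrak{s}, \mathfrak{r})$. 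For $n=2$ the decisive simplification is that such a bound follows from the basic energy estimate alone, without recourse to the higher-order a priori estimates of Section \ref{s.surjective}.

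First I would fix the pair $(\mathfrak{s}, \mathfrak{r}) = (4,4)$, which satisfies \eqref{eq.s.r} in dimension $n = 2$ since $2/4 + 2/4 = 1$, $\mathfrak{s} = 4 \geq 2$ and $\mathfrak{r} = 4 > 2$. Because $s \geq 1$ forces $2s + k \geq 2 > n/2$, every solution $(u,p) \in B^{k,2s,s}_\mathrm{vel,a}(I) \times B^{k+1,2(s-1),s-1}_\mathrm{pre,a}(I)$ embeds, by Lemma \ref{l.emb.Bochner}, into the regularity class to which Lemma \ref{p.En.Est.u.strong} applies, so that the basic energy estimate $\|u\|_{0,\mu,T} \leq c_0 \|(f,u_0)\|_{0,\mu,T}$ holds. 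In particular $u$ is controlled in $L^\infty(I, \mathbf{L}^2) \cap L^2(I, \mathbf{H}^1)$ by the data alone.

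The core step is the two-dimensional interpolation. Applying \eqref{eq.L-G-N} with $n = 2$, $j_0 = 0$, $k_0 = 1$, $p_0 = 4$, $q_0 = r_0 = 2$ and $a_0 = 1/2$ gives the Ladyzhenskaya-type inequality $\|u\|_{\mathbf{L}^4(\mathcal{Q})} \leq c_1 \|\nabla u\|_{\mathbf{L}^2}^{1/2}\|u\|_{\mathbf{L}^2}^{1/2} + c_2 \|u\|_{\mathbf{L}^2}$. Raising to the fourth power and integrating in $t$, I would obtain
\begin{equation*}
\int_0^T \|u(\cdot,t)\|_{\mathbf{L}^4}^4\, dt
\leq c\, \|u\|_{L^\infty(I,\mathbf{L}^2)}^2 \int_0^T \|\nabla u(\cdot,t)\|_{\mathbf{L}^2}^2\, dt
+ c\,T\, \|u\|_{L^\infty(I,\mathbf{L}^2)}^4,
\end{equation*}
so that $\|u\|_{L^4(I,\mathbf{L}^4)}$ is majorised by a continuous function of $\|u\|_{0,\mu,T}$, hence, via Lemma \ref{p.En.Est.u.strong}, by a continuous function of $\|(f,u_0)\|_{0,\mu,T}$.

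Finally I would close the argument. If $\mathcal{A}_a(S)$ is precompact in $B^{k,2(s-1),s-1}_\mathrm{for}(I) \times \mathcal{H}^{(a)}_{2s+k}$ then it is bounded there, and since $\|(f,u_0)\|_{0,\mu,T}$ is dominated by the norm of that space (as $s \geq 1$ makes the latter stronger), the quantity $\|(f,u_0)\|_{0,\mu,T}$ is uniformly bounded over $\mathcal{A}_a(S)$. The displayed estimate then bounds $S_\mathrm{vel,a}$ in $L^4(I,\mathbf{L}^4) = L^{\mathfrak{s}}(I,\mathbf{L}^{\mathfrak{r}})$, so the criterion of Theorem \ref{t.sr} is met and $\mathcal{A}_a$ is surjective; together with Theorem \ref{t.open.NS.short} this makes it a continuous open bijection, i.e. a homeomorphism. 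I do not expect a genuine obstacle here: the whole point is that in dimension two the energy estimate already lands in an admissible $L^{\mathfrak{s}}(I,\mathbf{L}^{\mathfrak{r}})$ space, which is exactly what fails for $n \geq 3$; the only care needed is the bookkeeping that reduces a solution in the high-regularity Bochner space to the setting of Lemma \ref{p.En.Est.u.strong} and the verification that precompactness of the image controls the low-order data norm.
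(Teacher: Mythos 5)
Your proposal is correct and follows essentially the same route as the paper: both reduce the claim to the surjectivity criterion of Theorem \ref{t.sr} and verify it by combining the basic energy estimate of Lemma \ref{p.En.Est.u.strong} with the two-dimensional Gagliardo--Nirenberg (Ladyzhenskaya) interpolation, the paper working with a general admissible pair $(s',r')$ satisfying \eqref{eq.sigma.rho} where you fix the concrete choice $(\mathfrak{s},\mathfrak{r})=(4,4)$. The exponent bookkeeping in your $L^4(I,\mathbf{L}^4)$ bound matches the paper's inequality \eqref{eq.8.n}.
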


\begin{proof} As it is well known, applying 
Gagliardo-Nirenberg inequality \eqref{eq.L-G-N} 
with  $q_0=r_0=2$, $j_0=0$, $m_0=1$, $p_0=\frac{1}{r'}$, 
$a_0 = \frac{n(r'-2)}{2 \, r'}$ with the exceptional case where $n=2$, $r'=+\infty$ and 
$a_0=1$, we see that for any pair of positive real numbers $s'$, $r'$,   
\begin{equation} \label{eq.sigma.rho}
\frac{2}{s'} + \frac{n}{r'} =\frac{n}{2} \mbox{ with }
\left\{
\begin{array}{lll} 
2< r'<+\infty, &  2< s' <+ \infty, & n=2, \\
2< r'\leq \frac{2n}{n-2}, &  2\leq s' <+ \infty, & n\geq 3, \\
\end{array}
\right.
\end{equation}
there is a positive constant $c_{r',s'}$ independent on $T$ such that 
\begin{equation}  \label{eq.8.n}
\| u\|_{L^{s'}  (I,\mathbf{L}^{r'}({\mathbb R}^n))}
\leq c_{r',s'}\Big(  \|\nabla u\|^{\frac{2}{s'}} _{L^2(I,\mathbf{L^2})} 
\| u\|^{\frac{s'-2}{s'}} _{L^\infty(I,\mathbf{L^2})}  + 
\| u\| _{L^\infty(I,\mathbf{L^2})}\Big)
\end{equation}
for all $u \in L^2 (I, \mathbf{H}^1) \cap L^\infty (I, \mathbf{L}^2)$.

Thus, for $n=2$, Lemma \ref{p.En.Est.u.strong} and \eqref{eq.8.n} yield the desired 
\textit{universal} $L^{\mathfrak s}
 (I, \mathbf{L}^{\mathfrak r})$-estimate with numbers 
${\mathfrak s} = s'$, ${\mathfrak r} = r'$ satisfying both 
\eqref{eq.s.r} and \eqref{eq.sigma.rho}. 
Then the statement of the proposition follows from Theorem \ref{t.sr}. 
\end{proof}

However, for $n\geq 3$ inequalities \eqref{eq.s.r} and \eqref{eq.sigma.rho}
never coincide. Hence, we proceed with the standard result related to existence 
of regular solutions to Navier-Stokes type equations in higher dimensions 
on small intervals of the time variables, see, for instance, 
\cite[P. I, \S 3, Theorem 3.2]{Tema95} in the periodic setting.
   
\begin{proposition}
\label{p.exist.Bohcher.small.time}
Let $n\geq 3$,    $k \in \mathbb{Z}_+$,
   $s \in \mathbb{N}$, $2s+k> \frac{n}{2}$, 
	and  non-linearity \eqref{eq.non-linearity} satisfy \eqref{eq.trilinear}.
Then, given $(f,u_0) \in B^{k,2(s-1),s-1}_{\mathrm{for}} (I) \times 
\mathcal{H}^{(a)}_{2s+k}$  there is $T^* \in (0,T]$ and the unique solution 
$(u,p) \in B^{k,2s,s}_{\mathrm{vel,a}} ([0,T^*]) \times B^{k+1,2(s-1),s-1}_{\mathrm{pre,a}}
([0,T^*]) $ 
to the Navier-Stokes type equations \eqref{eq.NS.map}  with $T$ replaced by $T^*$, i.e.  
on ${\mathbb T}^n \times (0,T^*) $. 
\end{proposition}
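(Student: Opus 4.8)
The plan is to solve \eqref{eq.NS.map} on a short interval by the Banach fixed point theorem, treating the non-linearity as a perturbation of the Stokes operator. Set $\mathcal{S} = \mathcal{A}_{0,a}^{-1}$, the inverse of the linear mapping \eqref{eq.map.Aw} with $w=0$, which exists and is continuous over $[0,T^*]$ for every $T^* \in (0,T]$ by Theorem \ref{t.exist.NS.lin.strong}. For fixed data $(f,u_0)$ I would define on the cylinder $\mathbb{T}^n \times (0,T^*)$ the map
\[
   \Psi(u,p) = \mathcal{S}\big( (f,u_0) - (\mathbf{D}u,0) \big),
\]
where $\mathbf{D}u \in B^{k,2(s-1),s-1}_\mathrm{for}([0,T^*])$ by Lemma \ref{l.NS.cont}, so that the fixed points of $\Psi$ are exactly the solutions of \eqref{eq.NS.map} in $B^{k,2s,s}_\mathrm{vel,a}([0,T^*]) \times B^{k+1,2(s-1),s-1}_\mathrm{pre,a}([0,T^*])$. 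Writing $R_0 = \|\mathcal{S}(f,u_0)\|$ and working in the closed ball $B_R$ of radius $R = 2R_0$ in this product space, the goal is to choose $T^*$ so small (depending on $R_0$, hence on the data) that $\Psi$ is a contracting self-map of $B_R$.

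The contraction estimate is where the short interval enters. For $(u,p),(u',p') \in B_R$ the difference $\Psi(u,p) - \Psi(u',p')$ is, by linearity of $\mathcal{S}$, the Stokes solution with \emph{zero} initial data and forcing $-(\mathbf{D}u - \mathbf{D}u')$, the latter expanded through the bilinear identity \eqref{eq.M.diff}. I would apply the a priori energy estimates of the type established in Lemmata \ref{p.En.Est.u}, \ref{t.En.Est.g.2} and \ref{c.En.Est.g.ks} (for the linear problem, i.e.\ with $w=0$) to this difference: since the initial datum vanishes, every $u_0$-contribution drops out and the full $B^{k,2s,s}_\mathrm{vel,a}$-norm of the difference is bounded solely by time-integrated norms of the forcing. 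Because the non-linearity is continuous into the time-continuous components of $B^{k,2(s-1),s-1}_\mathrm{for}$, I can replace each $L^2([0,T^*],\cdot)$ and $L^1([0,T^*],\cdot)$ forcing norm by $(T^*)^{1/2}$, respectively $T^*$, times the corresponding $C([0,T^*],\cdot)$ norm (cf.\ \eqref{eq.En.Est2add}), thereby extracting an overall factor $(T^*)^\theta$ with some $\theta > 0$. Combined with the bilinear bound $\|\mathbf{D}u - \mathbf{D}u'\|_{B_\mathrm{for}} \leq c(\|u\|+\|u'\|)\|u-u'\|$ of \eqref{eq.B.pos.bound}, this yields $\|\Psi(u,p)-\Psi(u',p')\| \leq c\,(T^*)^\theta R\,\|(u,p)-(u',p')\|$ and, taking $(u',p')=0$, also $\|\Psi(u,p)\| \leq R_0 + c\,(T^*)^\theta R^2$. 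Choosing $T^*$ with $c\,(T^*)^\theta R \leq 1/2$ turns $\Psi$ into a $1/2$-contraction mapping $B_R$ into itself, so the Banach fixed point theorem provides a unique fixed point $(u,p) \in B_R$ solving \eqref{eq.NS.map} on $[0,T^*]$.

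Uniqueness in the whole space $B^{k,2s,s}_\mathrm{vel,a}([0,T^*]) \times B^{k+1,2(s-1),s-1}_\mathrm{pre,a}([0,T^*])$, and not merely within $B_R$, then follows at once from the injectivity of $\mathcal{A}_a$ established in Theorem \ref{t.open.NS.short} applied over $[0,T^*]$, so the fixed point is precisely \emph{the} solution asserted by the proposition. I expect the main obstacle to be the uniform $(T^*)^\theta$ gain in the \emph{top-order} part of the $B^{k,2s,s}_\mathrm{vel,a}$-norm: parabolic maximal regularity by itself gives a $T^*$-independent bound for the highest $L^2([0,T^*],\cdot)$ derivative norms, so the gain must instead be harvested by measuring the forcing $\mathbf{D}u - \mathbf{D}u'$ in the strictly stronger $C([0,T^*],\cdot)$ norm available to it — which is legitimate because the bilinear map lands two spatial orders higher than the minimal forcing level via the embedding of Lemma \ref{l.emb.Bochner}, leaving room to pay a power of $T^*$ in time. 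Matching these regularity levels so that every term in the estimate carries a strictly positive power of $T^*$ is the delicate bookkeeping step; once it is in place, the contraction, and hence the proposition, follow.
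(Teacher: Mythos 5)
The paper itself offers no proof of this proposition: it is stated as a standard local-in-time existence result and attributed to Temam (\cite[P. I, \S 3, Theorem 3.2]{Tema95}), where it is obtained by Galerkin approximation together with a Riccati-type differential inequality for a spatial energy norm whose blow-up time bounds $T^*$ from below. Your fixed-point reduction $\Psi(u,p)=\mathcal{S}\big((f,u_0)-(\mathbf{D}u,0)\big)$ with $\mathcal{S}=\mathcal{A}_{0,a}^{-1}$ furnished by Theorem \ref{t.exist.NS.lin.strong} is a legitimate alternative route, and the uniqueness step via the injectivity in Theorem \ref{t.open.NS.short} is fine. The difficulty is that the contraction estimate you need is not available in the norm you work in, and it fails exactly at the point you defer to ``delicate bookkeeping''.

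Concretely: the norm of $B^{k,2s,s}_\mathrm{vel,a}([0,T^*])$ contains the components $\|\nabla^i\partial_x^\alpha\partial_t^j(\cdot)\|_{C([0,T^*],\mathbf{L}^2)}$ with $j\geq 1$. For the difference $v$ of two iterates these are controlled only through the equation, $\partial_t v=\mu\Delta v-a\nabla q-(\mathbf{D}u-\mathbf{D}u')$, so they inherit the $C([0,T^*],\cdot)$-norms of the forcing difference \emph{without} any time integration; a sup-in-time norm does not shrink when the interval shrinks, so no factor $(T^*)^\theta$ can be extracted there, and \eqref{eq.B.pos.bound} leaves an extra term $c\,R\,\|u-u'\|$ with $c$ an absolute constant and $R=2R_0$ dictated by the data. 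The same obstruction already defeats the self-map property $\|\Psi(u,p)\|\leq R$ for large data. Note also that the regularity gain you invoke is one spatial order, not two: $\mathbf{D}u\in C(I,\mathbf{H}^{k+2s-1})$ versus the required $C(I,\mathbf{H}^{k+2s-2})$, so only the $L^2$-in-time components of the forcing norm genuinely pay a power of $T^*$. The standard repairs are real additional work: either (i) contract in a low norm such as $\|\cdot\|_{0,\mu,T^*}$ on a set that is bounded in the high norm and closed under the low one, recovering the high-norm bound from the a priori estimates of Lemmata \ref{t.En.Est.g.2}--\ref{c.En.Est.g.ks} once $\|u\|_{L^{\mathfrak s}(I,\mathbf{L}^{\mathfrak r})}$ is controlled on a short interval; or (ii) restrict to iterates sharing the trace $u(\cdot,0)=u_0$, so that the forcing difference vanishes at $t=0$, and argue that its sup-norm over $[0,T^*]$ is small by equicontinuity in $t$ --- which requires one more time derivative than the hypothesis $2s+k>n/2$ comfortably provides at the lowest admissible indices; or (iii) follow the cited route of Temam and obtain the lower bound on $T^*$ from a differential inequality rather than a contraction. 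As it stands, your proposal identifies a workable strategy, but the key estimate is asserted rather than proved, and it is false in the form stated.
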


We are now in a position to formulate and to prove existence theorems for regular solutions 
to \eqref{eq.NS} (more precisely, to \eqref{eq.NS.map}) in the case where $a=0$ and 
$n\geq 3$.  
 
\begin{theorem}
\label{t.exist.Bohcher}
Let $n\geq 3$, 
   $k \in \mathbb{Z}_+$,
   $s \in \mathbb{N}$, $2s+k> \frac{n}{2} +2$, 
	and  non-linearity \eqref{eq.non-linearity} satisfy \eqref{eq.trilinear}.
 If $a=0$ then for each $T > 0$ mapping \eqref{eq.map.A} generated by the Navier-Stokes
 type equations \eqref{eq.NS} is a homeomorphism.
\end{theorem}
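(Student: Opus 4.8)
The plan is to deduce the homeomorphism property from surjectivity. By Theorem \ref{t.open.NS.short} the map $\mathcal{A}_0$ is injective, continuous and open, and a continuous open bijection is automatically a homeomorphism, so the whole theorem reduces to proving that $\mathcal{A}_0$ is onto. Since for $a=0$ the pressure space $B^{k+1,2(s-1),s-1}_{\mathrm{pre},0}(I)=\{0\}$, only the velocity component is at stake. By Corollary \ref{c.clopen} surjectivity is equivalent to closedness of the range, and by the surjectivity criterion of Theorem \ref{t.sr} (whose hypotheses hold here, since \eqref{eq.trilinear} is assumed and $2s+k>n/2+2$ implies $2s+k>n/2$) it suffices to show that whenever a set $S$ has precompact image $\mathcal{A}_0(S)$, the velocity part $S_{\mathrm{vel},0}$ is bounded in $L^{\mathfrak{s}}(I,\mathbf{L}^{\mathfrak{r}})$ for one admissible pair $(\mathfrak{s},\mathfrak{r})$ satisfying \eqref{eq.s.r}. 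Equivalently, via Lemma \ref{l.closure}, I would take $(f,u_0)$ in the closure of the range, pick preimages $(u_i,0)$ with data converging to $(f,u_0)$, and show that $\{u_i\}$ is bounded in $L^{\mathfrak{s}}(I,\mathbf{L}^{\mathfrak{r}})$; Lemma \ref{l.closure} then places $(f,u_0)$ in the range.

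Everything thus hinges on this $L^{\mathfrak{s}}(I,\mathbf{L}^{\mathfrak{r}})$ bound, and here the hypothesis $a=0$ enters decisively. First, the basic energy estimate of Lemma \ref{p.En.Est.u.strong}, resting on the sign condition \eqref{eq.trilinear}, controls $\|u_i\|_{0,\mu,T}$, hence $\|u_i\|_{C(I,\mathbf{L}^2)}$ and $\|\nabla u_i\|_{L^2(I,\mathbf{L}^2)}$, uniformly in $i$ since the data converge. Next, because $\mathbf{D}u$ is a \emph{local} operator when $a=0$ (no Leray--Helmholtz projection intervenes), the pointwise $\mathbf{L}^2$-bound of Lemma \ref{l.En.Est.Du.L2}, inequality \eqref{eq.En.Est3bbb}, is available and feeds the higher energy identities of Lemma \ref{t.En.Est.g.2}. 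These yield the \emph{implicit} exponential estimate \eqref{eq.d.sup}, in which $\|\nabla^{j+1}u_i(\cdot,t)\|^2_{\mathbf{L}^2}$ is bounded by $c_1\exp\big(c_2\int_0^t\|u_i(\cdot,s)\|^{\mathfrak{s}}_{\mathbf{L}^{\mathfrak{r}}}\,ds\big)$ with constants depending only on the (bounded) data. Combining this with the Gagliardo--Nirenberg and Sobolev inequalities \eqref{eq.L-G-N}, \eqref{eq.Sob.index}, legitimate since $2s+k>n/2+2$ leaves room to dominate $\|u_i\|_{\mathbf{L}^{\mathfrak{r}}}$ by a controlled Sobolev norm, produces a closed differential inequality for $G_i(t)=\int_0^t\|u_i(\cdot,s)\|^{\mathfrak{s}}_{\mathbf{L}^{\mathfrak{r}}}\,ds$ to which the generalized Gronwall (Perov) Lemma \ref{l.Perov} applies.

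The delicate point, and what I expect to be the main obstacle, is that this differential inequality is genuinely super-linear: a naive application of Gronwall's lemma returns only a bound that may blow up in finite time, which is precisely why no \emph{universal} a priori estimate is sought. My plan is therefore to argue by contradiction along divergent sequences. Assuming $\|u_i\|_{L^{\mathfrak{s}}(I,\mathbf{L}^{\mathfrak{r}})}\to\infty$ while the data stay bounded, I would invoke the $\gamma_0>1$ branch of Lemma \ref{l.Perov} together with its smallness condition \eqref{eq.Gron.add} to control $G_i$ on a time interval whose length is governed by the data, and then match the forced growth rate of $G_i$ against the uniform bounds coming from the convergent data: the admissible blow-up rate dictated by \eqref{eq.Gron.add} and the implicit estimate \eqref{eq.d.sup} is incompatible with bounded initial energy $\|\nabla^{j+1}u_{i,0}\|_{\mathbf{L}^2}$ and bounded forcing, so the divergence cannot occur and the required uniform bound follows. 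It is essential that the torus is compact and closed, so that the spectral and Sobolev machinery of Lemma \ref{l.basis.V} and \eqref{eq.Hs.per} is available, and that the system is local; for $a=1$ the term $\mathbf{D}u$ is replaced by the nonlocal $\mathbf{P}\mathbf{D}u$, Lemma \ref{l.En.Est.Du.L2} fails, and this scheme breaks down. Once boundedness in $L^{\mathfrak{s}}(I,\mathbf{L}^{\mathfrak{r}})$ is secured, Lemmata \ref{t.En.Est.g.2} and \ref{c.En.Est.g.ks} upgrade it to a full $B^{k,2s,s}_{\mathrm{vel},0}(I)$-bound, Lemma \ref{l.closure} closes the range, and Corollary \ref{c.clopen} delivers surjectivity, completing the proof; Proposition \ref{p.exist.Bohcher.small.time} supplies the local solvability that makes the preimages available to estimate.
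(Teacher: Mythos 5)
Your reduction of the theorem to a single $L^{\mathfrak{s}}(I,\mathbf{L}^{\mathfrak{r}})$ bound for preimage sequences is exactly the paper's framework (Theorem \ref{t.open.NS.short}, Corollary \ref{c.clopen}, Theorem \ref{t.sr}, Lemma \ref{l.closure}), and that part is fine. The gap is in the only step that actually matters: how that bound is obtained. The route you describe --- feed Lemma \ref{l.En.Est.Du.L2} into Lemma \ref{t.En.Est.g.2} to get \eqref{eq.d.sup}, then use Sobolev embedding to dominate $\|u_i\|_{\mathbf{L}^{\mathfrak r}}$ by the very Sobolev norm that \eqref{eq.d.sup} controls --- closes into a differential inequality of the form $G_i'(t)\leq C\exp\bigl(cG_i(t)\bigr)$ for $G_i(t)=\int_0^t\|u_i\|^{\mathfrak s}_{\mathbf{L}^{\mathfrak r}}\,d\tau$. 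This is not of the polynomial form $\mathfrak{F}'\leq\mathfrak{B}\mathfrak{F}+\mathfrak{C}\mathfrak{F}^{\gamma_0}$ to which Lemma \ref{l.Perov} applies, and its solutions genuinely blow up at a finite time determined by the size (not the smallness) of the data; for arbitrary $T$ and merely bounded data no contradiction can be extracted from it. That is precisely the content of the short-time result (Proposition \ref{p.exist.Bohcher.small.time}), and it cannot be extended to all of $[0,T]$ by ``matching growth rates'': there is nothing incompatible between bounded initial energy, bounded forcing, and finite-time blow-up of an exponential ODE.

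What the paper actually does is introduce a new a priori quantity by testing the equation against $u\,e^{1+|u|^2}$ (Lemma \ref{l.diff.sigma} with $\eta(z)=e^{z}$), which yields the identity \eqref{eq.non-lin.term} for $\|e^{1+|u|^2}(\cdot,t)\|_{L^1}$. The locality of $\mathbf{D}$ (available only for $a=0$) permits the pointwise absorption \eqref{eq.Du.exp} of the nonlinear term into the dissipation, and the Gagliardo--Nirenberg inequality applied to $\nabla e^{(1+|u|^2)/2}$ produces the favourable $L^{\frac{n}{n-2}}$ term on the left of \eqref{eq.exp.int}. The remaining quartic term is interpolated between $L^1$ and $L^{\frac{n}{n-2}}$ with an exponent $\gamma$ tending to $0$, and the key Lemma \ref{l.claim} --- proved by the asymptotic-matching contradiction along the putatively divergent sequence, using \eqref{eq.asymp.1} and the choice $\gamma=2/(4m+n)$ --- shows that one can select a sequence-dependent $\delta$ so small that the resulting superlinear Gronwall inequality with exponent $1+\delta$ satisfies the admissibility condition \eqref{eq.Gron.add} on the whole of $[0,T]$, giving the implicit bound \eqref{eq.Gron.large}. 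None of this machinery --- the exponential test function, the pointwise absorption, or the selection of $\delta$ --- appears in your proposal, so the central estimate is missing rather than merely sketched.
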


\begin{proof} We  begin with a simple lemma.  
The fact that the torus is a compact closed manifold is essential in it because otherwise one 
should impose additional restrictions on the function $\eta$ under the consideration. 

\begin{lemma}
\label{l.diff.sigma}
Let    $k \in \mathbb{Z}_+$,
   $s \in \mathbb{N}$, $2s+k> \frac{n}{2}$, $a=0$ or $a=1$, and 
   $u \in B^{k,2s,s}_\mathrm{vel,a} (I)$. 
	Then for each non-negative function $\eta \in C^1[1,+\infty)$  
we have 
\begin{equation}
\label{eq.dt.eta}
   2 (\partial_t u (\cdot,t), (u \eta'(1+|u|^{2})) (\cdot,t))_{\mathbf{L}^2   }
 = \frac{d}{dt}\, \| \eta(1+|u|^2) (\cdot,t) \|_{L^{1}   };
\end{equation}
if, in addition, $\eta \in C^2[1,+\infty)$ 
 then on the interval $I = [0,T]$ we obtain
\begin{equation} \label{eq.Delta.eta}
 - (\Delta  u (\cdot,t), (u \eta' (1+|u|^{2})) (\cdot,t))_{\mathbf{L}^2   }\!
 = 
\end{equation}
\begin{equation*}
\int_{\mathcal Q}    \Big( |\nabla u|^2 \,  \eta'(1+|u|^{2})   
 + \frac{1}{2}  |\nabla |u |^{2}|^2 \, \eta'' (1+|u|^2)\Big) (\cdot,t) dx.
\end{equation*}
\end{lemma}

\begin{proof}
By the Sobolev embedding theorem (see \eqref{eq.Sob.index}), we get
   $u \in C (I,\mathbf{H}^{2s+k}   )$
and this latter space is embedded continuously into
   $C (I,\mathbf{C}^{0,\lambda}   )$
for all $0 \leq \lambda < 1/2$ if $2s+k> \frac{n}{2}$.
It follows that $u \in C(I, \mathbf{C}^{0,\lambda}   )$, 
  $|u|^2\in  C (I, \mathbf{C}^{0,\lambda}   ),$ 
    $\| \eta (1+|u|^2) (\cdot,t) \|_{L^{1}   } \in C(I)$ 
 and  $u \eta' (1+|u|^2) \in C (I, \mathbf{C}^{0,0}   )$.

Any continuous function on $(0,T)$ is a regular distribution there, and so it can be weakly
differentiated in the interval.
Thus,
$$
   \frac{d}{dt}\,  \left\| \eta(1+|u|^2) (\cdot,t) \right\|_{L^{1}   }
 \in \mathcal{D}' (0,1).
$$
On the other hand, as $u \in B^{k,2s,s}_\mathrm{vel,a} (I)$, then
   $\partial_t u \in C (I, \mathbf{L}^{2})$.
Hence it follows that 
\begin{equation*}
 - \left(  \| \eta(1+|u|^2) (\cdot,t) \|_{L^{1}   }, v' (t) \right)_{L^{2} (I)}
 = - \int_0^T \!\! \int_\mathcal{Q} \eta(1+|u|^2) (x,t) 
 dx  v' (t) dt =
\end{equation*}
\begin{equation*}
   2   \int_0^T \!\! \int_\mathcal{Q} \Big( \sum_{j=1}^n u^j \partial_t u^j \Big)
                            \eta'(1+|u|^2)  (x,t) dx\, v (t) dt
 = 
\end{equation*}
\begin{equation*}
  2 \Big( (\partial_t u (\cdot,t), (u \eta'(1+|u|^2)  (\cdot,t))_{\mathbf{L}^{2}   },
         v (t)
   \Big)_{L^{2} (I)}
\end{equation*}
for all smooth functions $v$ with compact support in $(0,T)$. 
Thus, \eqref{eq.dt.eta} holds true and the function
   $(\partial_t u, u  \eta'(1+|u|^2) )_{\mathbf{L}^{2}   }$
belongs to $C (I)$.

Next, as   $u \in  C (I, \mathbf{C}^{0,\lambda}   )\cap C (I, \mathbf{H}^{2s+k} ) $
for each $0 \leq \lambda < 1/2$, we see that 
\begin{eqnarray*}
\lefteqn{
   - (\Delta  u (\cdot,t), (u  \eta'(1+|u|^2)) (\cdot,t))_{\mathbf{L}^2   }
}
\\
 & = &
   \int_\mathcal{Q}
   \Big( \sum_{j,k=1}^n (\partial_j u^k)^2  \eta'(1+|u|^2)
       +  \sum_{j=1}^n  \frac{1}{2} (\partial_j |u|^{2}) \, \partial_j \eta'(1+|u|^2)
   \Big) (x,t)    dx = 
	\\
 & = &
   \int_\mathcal{Q}
   \Big( |\nabla u|^2  \eta'(1+|u|^2)
       +   \frac{1}{2} |\nabla |u|^{2}|^2 \,  \eta''(1+|u|^2)
   \Big) (x,t)    dx 
\end{eqnarray*}
for almost all $t \in [0,T]$ that gives precisely \eqref{eq.Delta.eta}. 
\end{proof}

If $\eta (z) = e^{ z }$,  $a=0$ or $a=1$, 
then for each $u \in B^{k,2s,s}_\mathrm{vel,a} (I)$ for almost all $t \in I$  
\begin{equation}
\label{eq.dt.exp}
(\partial_t u (\cdot,t), (u \eta' (1+|u|^{2})) (\cdot,t))_{\mathbf{L}^2   } 
= \frac{1}{2} \frac{d}{dt}\, \left\| e^{ 1+|u|^2} \right\|_{L^{1}   },
\end{equation}
\begin{equation} 
\label{eq.Delta.exp}
- (\Delta  u (\cdot,t), (u \eta'(1+|u|^2)) (\cdot,t))_{\mathbf{L}^2   }
 = 
\end{equation}
\begin{equation*}  
\int_{\mathcal Q} 
e^{1+|u|^2} \, \Big( |\nabla u|^2 + \frac{1}{2}| \nabla |u|^2|^2\Big) (\cdot,t)
dx = 
  \left\||\nabla u| e^{(1+|u|^2) /2} \right\|^2_{L^2   } 
+ 2 \left\|\nabla  e^{(1+|u|^2) /2} \right\|^2_{\mathbf{L}^2   }.
\end{equation*}

Now, if a pair $(u,p) \in B^{k,2s,s}_{\mathrm{vel,a}} (I) \times B^{k+1,2(s-1),s-1}
_{\mathrm{pre,a}} (I) $ is a solution to the Navier-Stokes type equations \eqref{eq.NS.map}  
corresponding to some data $(f,u_0) \in B^{k,2(s-1),s-1}_{\mathrm{for}} (I) \times 
\mathcal{H}^{(a)}_{2s+k}$, then formulas \eqref{eq.dt.exp}, \eqref{eq.Delta.exp} 
imply 
\begin{equation*}
     \frac{d}{dt} \left\| e^{ 1+|u|^2} (\cdot,t)\right\|_{L^{1}   }
 + 2\mu  \left\||\nabla u| e^{(1+|u|^2) /2} (\cdot,t)\right\|^2_{L^2   } 
+ 4 \mu \left\|\nabla  e^{(1+|u|^2) /2} (\cdot,t)\right\|^2_{\mathbf{L}^2   }  =
\end{equation*}
\begin{equation} \label{eq.non-lin.term}
 2 \Big(\mathbf{P}_a (f -  
\mathbf{D}u) (\cdot,t), u e^{(1+|u|^2) } (\cdot,t)\Big)_{\mathbf{L}^2   }
\end{equation}
for almost all $t \in [0,T]$  if $a=0$ or $a=1$ (because 
$a\, \mathbf{P}_a  \nabla p =0$). 

Let us transform  this energy identity to an a priori estimate.

First, as $n\geq 3$, we see that according to \eqref{eq.L-G-N}, 
\begin{equation} \label{l.G-L-N.exp}
\left\|\nabla  e^{(1+|u|^2) /2} (\cdot,t)\right\|^2_{\mathbf{L}^2   } \geq c^{(1)}_n 
\left\|  e^{(1+|u|^2) /2} (\cdot,t)\right\|^2_{L^{\frac{2n}{n-2}} } 
- c^{(2)}_n  \left\|  e^{(1+|u|^2) /2} (\cdot,t)\right\|^2_{L^{2}} 
\end{equation} 
for almost all $t \in [0,T]$ 
with the suitable Gagliardo-Nirenberg constants $c^{(j)}_n>0$ independent on $u$ and $t$.

Second, as $2s+k> \frac{n}{2} +2$, we see that $f , {\mathbf P}_a f 
\in C (I,{\mathbf C}^{0,\lambda})$ and then 
\begin{equation} \label{eq.f.exp}
2\left|({\mathbf P}_a f , u e^{(1+|u|^2)} )_{\mathbf{L}^2 }\right|\leq 
2\| {\mathbf P}_a f\|_{\mathbf{C}^{0,\lambda}}
\left\|  e^{(1+|u|^2)/2}\right\|_{L^2} 
\left\| (1+|u|^2)  e^{(1+|u|^2)/2}\right\|_{L^2}  \leq 
\end{equation}
$$
\|{\mathbf P}_a f \|^2_{\mathbf{C}^{0,\lambda}  } \left\| e^{(1+|u|^2)/2 }
\right\|^2_{L^2 }
+ \left\|(1+|u|^2) e^{(1+|u|^2)/2 }\right\|^{2}_{L^2} 
$$
for almost all $t \in [0,T]$. 

We continue with $a=0$ because we have no idea how to estimate the pseudo-differential 
term $ \mathbf{P} \mathbf{D}u$ in \eqref{eq.non-lin.term} for $a=1$.  

According to the H\"older inequality, see for instance \cite[Corollary 2.6]{Ad03},  
\eqref{eq.Young}, and  \eqref{eq.non-linearity} 
for all $u\in B^{k,2s,s}_{\mathrm{vel,a}} (I)$ the following inequality holds: 
\begin{equation} \label{eq.Du.exp}
2\left|(\mathbf{D}u , u e^{1+|u|^2 })_{\mathbf{L}^2   }\right|\leq 
\tilde C_{{\mathcal M},n} \left\| |\nabla u| e^{(1+|u|^2) /2} \right\|_{L^2} 
\left\| |u|^2  e^{(1+|u|^2)/2} \right\|_{L^2}  \leq 
\end{equation}
$$
\mu  \left\||\nabla u| e^{(1+|u|^2) /2} (\cdot,t)\right\|^2_{L^2   } + 
 C_{{\mathcal M},n} \mu^{-1}
\left\| (1+|u|^2)  e^{(1+|u|^2)/2}\right\|^2_{L^2}  
$$
for almost all $t \in [0,T]$ with positive constants $C_{{\mathcal M},n}$, 
$\tilde C^{(\mu)}_{{\mathcal M},n}$ independent on $u$. 

Therefore, combining \eqref{eq.non-lin.term}, \eqref{l.G-L-N.exp}, 
\eqref{eq.Du.exp}, \eqref{eq.f.exp},  
and applying an integration over the interval $(0,t)$,   we obtain 
\begin{equation} \label{eq.exp.int}
    \Big\| e^{1+|u|^2  } (\cdot,t)\Big\|_{L^{1}   }
 + \int_0^t\Big(\mu   \Big\||\nabla u| e^{\frac{1+|u|^2}{2}} \Big\|^2_{L^2   } 
+ 4\mu c^{(1)}_n  \Big\|e^{1+|u|^2 } \Big\|_{L^{\frac{n}{n-2}}   } \Big)d\tau \leq 
\end{equation}
\begin{equation*}
 \Big\| e^{1+|u_0|^2}  \Big\|_{L^{1}   } + 
 \int_0^t \Big( \Big( 4\mu c^{(2)}_n  + \|f\|^2_{\mathbf{C} ^{0,\lambda} } \Big)
\Big\| e^{ 1+|u|^2  }\Big\|_{L^{1}   } + 
\frac{C_{{\mathcal M},n}}{\mu} \Big\| (1+|u|^2)^2  e^{1+|u|^2 }
\Big\|_{L^1}  \Big)d\tau 
\end{equation*}
for all $t \in [0,T]$ with a constant $C^{(\mu)}_{{\mathcal M},n}>0$ independent on 
$f$, $u_0$ and $u$. 

According to Corollary \ref{c.clopen}, to complete the proof of the theorem, 
it is sufficient to prove that the range of mapping \eqref{eq.map.A} 
is closed. With this purpose,  let a pair
    $(f,u_0) \in B^{k,2(s-1),s-1}_\mathrm{for} (I) \times {\mathbf H}^{2s+k}$
belong to the closure of the range of values of the mapping $\mathcal{A}_0$.
 Then there is a sequence $\{ u_i \}$ in     $B^{k,2s,s}_\mathrm{vel,0} (I) $ 
such that the sequence
   $\{ (f_i, u_{0,i}) = \mathcal{A}_0 u_i \}$
converges to  $(f,u_0)$ in the space $B^{k,2(s-1),s-1}_\mathrm{for} (I) \times 
{\mathbf H}^{2s+k}$. In particular, for the sequences
$
A_i = \left\| e^{ (1+|u_{0,i}|^2)  }\right\|_{L^{1}   }$ ,  
$4\mu c^{(2)}_n  +\|f_i(\cdot, t)\|^2_{\mathbf{C}^{0,\lambda} }  
$, 
we have for all $i \in {\mathbb N}$ and   all $ t \in [0,T]$:  

\begin{equation} \label{eq.A}
A_i \leq \mathrm{Vol}({\mathbb T}^n) 
e^{ (1+C \, \|u_{0}\|^2_{\mathbf{C}})}   \leq 
\mathrm{Vol}({\mathbb T}^n) 
e^{ (1+C\, \|u_{0}\|^2_{\mathbf{H}^{2s+k}})}  =A=A(u_0)>0, 
\end{equation}
\begin{equation} \label{eq.B}
4\mu c^{(2)}_n  +\ \|f_i(\cdot, t)\|^2_{\mathbf{C}^{0,\lambda} }  \leq 
4\mu c^{(2)}_n +C\|f\|^2
_{B^{k,2(s-1),s-1}_\mathrm{for} (I)} =B=B(f)>0, 
\end{equation}
with a  positive constant $C$ because of the Sobolev embedding theorem. 

Thus, if $a=0$ then for all $i \in {\mathbb N}$ and all $t \in [0,T]$ we obtain 
\begin{equation*} 
    \left\| e^{ 1+|u_i|^2  } (\cdot,t)\right\|_{L^{1}   }
 + \int_0^t\Big(\mu  \left\||\nabla u_i| e^{(1+|u_i|^2) /2} \right\|^2_{L^2   } 
+ 4\mu c^{(1)}_n  \left\|e^{1+|u_i|^2 } \right\|_{L^{\frac{n}{n-2}}   } \Big)d\tau \leq 
\end{equation*}
\begin{equation} \label{eq.exp.int.i}
A +\int_0^t \!\! 
\Big( B\left\| e^{ 1+|u_i|^2  }\right\|_{L^{1}   }
 +   \mu^{-1}C_{{\mathcal M},n} \Big\| (1+|u_i|^2)^2  e^{1+|u_i|^2 }
\Big\|_{L^1} 
\Big) d\tau .
\end{equation}
If the pair $(f,u_0)$ does not belong to 
 the range of mapping \eqref{eq.map.A} then Lemma \ref{l.closure}
 implies that the sequence 
$\{ u_i \}$  is unbounded in the space $L^{{\mathfrak s}} (I, 
\mathbf{L}^{{\mathfrak r}})$ with any  ${\mathfrak s}$, ${\mathfrak r}$ 
satisfying \eqref{eq.s.r}. Assume that it is the case. Then, 
replacing, if necessary, 
the sequence $\{ u_i \} $ with its subsequence we may  consider that \eqref{eq.unbounded} 
holds with ${\mathfrak s}={\mathfrak r}=n+2$. On the other hand, $e^z > \frac{z^{n+2}}{(
n+2)!}$ for all positive $z$ and hence 
\begin{equation} \label{eq.s=r}
\int_{0}^T \Big\|e^{1+|u_i|^2}\Big\|  _{L^{1} } d\tau \geq 
\frac{\|1+|u_i|^2\| ^{n+2} _{L^{n+2}(I,L^{n+2} )}}{(n+2)!} \geq 
C_{n,T} \|u_i\| ^{2(n+2)} _{L^{n+2}(I,\mathbf{L}^{n+2}) }  
\end{equation}
because the set ${\mathbb T}^n \times [0,T]$ is compact. 
In particular, according to \eqref{eq.unbounded},
\begin{equation} \label{eq.unbounded.exp}
\lim_{i\to +\infty}\int_{0}^T \Big\|e^{1+|u_i|^2}\Big\|  _{L^{1} } d\tau =+\infty 
\end{equation}
Instead of trying to obtain a {universal} a priori estimate for the solution $u$, 
we will be matching various asymptotics related to the sequence $\{ u_i\}$ satisfying 
\eqref{eq.unbounded.exp}. 

Namely, since  
$$
\lim_{\delta \to +0}
\frac{T^{1/\delta}  \, (A+1) \,  e^{TB}
}{(\mu\, \ln{(1/\delta)})^{1/\delta}} =0,
$$
then there is a number $0<\delta_0 < 1/n$ 
such that for all $\delta \in (0, \delta_0]$ we have
\begin{equation} \label{eq.Gron.add.im} 
\frac{T^{1/\delta} \, (A+1) }{(\mu \, \ln{(1/\delta)})^{1/\delta}}< e^{-T B}
\end{equation}

With a clear intention to use Gronwall type Lemma \ref{l.Perov} 
we claim the following.

\begin{lemma} \label{l.claim} 
There is a number $\delta =\delta(f,u_0) \in (0,\delta_0]$ such that 
\begin{equation*}
 \mu^{-1}C_{{\mathcal M},n} \int_0^t \!\! \Big(
 \Big\| (1+|u_i|^2)^2  e^{1+|u_i|^2 }
\Big\|_{L^1} \Big) d\tau \leq 
\end{equation*}
\begin{equation*} 
 1+\int_0^t\Big( 
\frac{1}{\mu \, \delta \, \ln{(1/\delta)}}\left\|
 e^{ 1+|u_i|^2  }\right\|^{1+\delta}_{L^{1}   }+
 4\mu c^{(1)}_n  \left\|e^{1+|u_i|^2} \right\|_{L^{\frac{n}{n-2}}   } \Big)d\tau 
\end{equation*}
for all $i \in {\mathbb N}$ and all $t\in [0,T]$.
\end{lemma}

\begin{proof} 
We argue by contradiction. Passing to a discrete parameter $\delta=1/m$,  
assume that for any natural number $m\geq 1/\delta_0$  
there are $i_m \in {\mathbb N}$ and $t_m\in [0,T]$ such that 
\begin{equation} \label{eq.exp.int.im.not}
1+  \int_0^{t_m} \Big( \frac{m}{\mu \, \ln{(m)}}
  \int_0^{t_m} 	\left\| 
 e^{ 1+|u_{i_m}|^2  }\right\|^{\frac{m+1}{m}}_{L^{1}   } 
+ 4\mu c^{(1)}_n  \left\|e^{(1+|u_{i_m}|^2) } \right\|_{L^{\frac{n}{n-2}}   } \Big)d\tau <
\end{equation}
\begin{equation*}
 \mu^{-1}C_{{\mathcal M},n} 
\int_0^{t_m} \!\! \Big\| (1+|u_{i_m}|^2)^2  e^{1+|u_{i_m}|^2 }
\Big\|_{L^1}  d\tau .
\end{equation*}
Since the segment $[0,T]$ is a compact, then passing to a subsequence, 
we may assume that the sequence $\{t_m\}$
converges to a time $T_0 \in [0,T]$.

According to Proposition \ref{p.exist.Bohcher.small.time} we may assume also that 
the pair $(f,u_0)$ admits the unique solution $u $ to the Navier-Stokes type equations 
\eqref{eq.NS} on ${\mathbb T}^n \times [0,T)$ 
belonging to $B^{k,2s,s}_{\mathrm{vel,0}} ([0,T^*])  $  for any $T^* \in (0,T)$. 
In particular, the Open Mapping Theorem \ref{t.open.NS.short} and the Implicit 
Function Theorem for Banach spaces yield that the sequence
$\{ u_i \}$ converges to the solution $u$ in the space  $B^{k,2s,s}_\mathrm{vel,0} 
([0,T^*])$ for any $T^* \in (0,T)$.

If $T_0=0$ then passing to the limit  with respect to $m\to + \infty$ in  
\eqref{eq.exp.int.im.not} we obtain a contradiction: 
$
0<1 \leq 0
$. 

If $T_0 \in (0,T)$ then the solution $ u$ belongs to  $B^{k,2s,s}_\mathrm{vel,0} 
([0,T_0]) $.  As far as the space $B^{k,2s,s}_\mathrm{vel,0} ([0,T_0]) $ is 
embedded continuously into 
$C ([0,T_0] ,\mathbf{C}^2)$ for $2s+k>n/2+2$,  
passing to the limit with respect to $m\to + \infty$ in  
\eqref{eq.exp.int.im.not} we obtain a contradiction: 
$$
+\infty \leq 
\mu^{-1} C_{{\mathcal M},n} 
\int_0^{T_0} \!\! \Big\| (1+|u|^2)^2  e^{1+|u|^2 }
\Big\|_{L^1}  d\tau  < + \infty. 
$$
Therefore, the only possibility is that 
the sequence $\{t_m\}$ converges to $T$ and both the left and the right hand sides 
of \eqref{eq.exp.int.im.not} converge to $+\infty$. 

In particular, \eqref{eq.unbounded.exp} implies 
$$
\lim_{m\to +\infty} t_m^{\frac{1}{m+1}}
\left\|e^{1+|u_{i_m}|^2 } \right\|_{L^{\frac{m+1}{m}} ([0,t_m], L^{1})   } \geq 
\lim_{m\to +\infty}
\int_0^{t_m} \ \left\|e^{1+|u_{i_m}|^2 } \right\|_{L^{1}   } \, d\tau = +\infty.
$$
Hence,  passing, if necessary to a subsequence,  we may assume that
\begin{equation}\label{eq.asymp.1}
\left\|e^{1+|u_{i_m}|^2} \right\|_{L^{\frac{m+1}{m}} ([0,t_m], L^{1})   }
\geq m^{4m} \mbox{ for all } m\geq 1/\delta_0.
\end{equation}

Note  that  for $m'\in \mathbb N$ and $\gamma>0$ the function 
$z^{m'} e^{-\gamma z}$ is bounded on the semi-interval $[1,+\infty)$ and, in particular,  
\begin{equation*} 
|z^{m'} e^{-\gamma z}| \leq 
\Big(\frac{m'}{\gamma e}\Big)^{m'}   \mbox{ for all }
z \in [1,+\infty).
\end{equation*}

Then  we arrive at the following inequality with any $\gamma \in (0,1)$, 
$v \in B^{k,2s,s}_\mathrm{vel,0} (I) $: 
\begin{equation*} 
\int_0^t \Big\|(1+|v|^{2})^2 e^{1+|v|^2 }\Big\|_{L^1}
d\tau 
= 
\int_0^t \Big( \Big\|(1+|v|^2)^{2} e^{-(1+|v|^2)\gamma} 
e^{(1+|v|^2)(1+\gamma)}\Big\|_{L^1} 
\Big) d\tau
\leq 
\end{equation*}
\begin{equation*} 
\Big(\frac{2}{ \gamma e}\Big)^2 
\int_{0}^t \Big\|e^{1+|v|^2}\Big\|^{1+\gamma} _{L^{1+\gamma}} d\tau  . 
\end{equation*}

Next, we notice that if
   $1 \leq p' < p < p''$
then, according to the H\"older inequality with
   $q = 1 / \vartheta$ and
   $q' = 1 / (1-\vartheta)$,
we have an interpolation inequality
\begin{equation*}
   \| w\|_{L^{p} }
 = \Big( \int_{{\mathcal Q}} (|w|^{p''})^{\vartheta} |w|^{p-\vartheta p'')} dx 
\Big)^{\frac{1}{p}}
 \leq
   \| w \|^{\frac{p'' \vartheta}{p}}_{L^{p''}}\,
   \| w\|^{\frac{p' (1-\vartheta)}{p}}_{L^{p'}},
\end{equation*}
see for instance \cite[Theorem 2.11]{Ad03}, where 
$  \vartheta=   \frac{p-p'}{p''-p'}\in (0,1), \, 
   (1-\vartheta)=   \frac{p''-p}{p''-p'} \in (0,1) $. 
In particular, for $p=1+\gamma$, $p'=1$, $p''=\frac{n}{n-2}$ we obtain
$$\Big(\frac{2}{ \gamma e}\Big)^2
\Big\|e^{1+|v|^2}\Big\| ^{\gamma+1}_{L^{1+\gamma}} 
\leq 
\Big(\frac{2}{ \gamma e}\Big)^2 \Big\| e^{1+|v|^2} \Big\|^{\frac{\gamma n}{2 } }
_{L^{\frac{n}{n-2}}}\,
  \Big\| e^{1+|v|^2} \Big\|^{\frac{2-\gamma (n-2)}{2}}_{L^{1}} \leq 
$$
\begin{equation*}
\mu c^{(1)}_n \varkappa \Big\| e^{1+|v|^2} \Big\|_{\mathbf{L}^{\frac{n}{n-2}}} + 
\Big(\frac{2}{ \gamma e }\Big)^{\frac{4}{2-\gamma n}}
\Big(\frac{\gamma n}{2 \varkappa \mu c^{(1)}_n }\Big)^{\frac{\gamma n}{2-\gamma n}}
\frac{(2-\gamma n)}{2}
\Big\| e^{1+|v|^2} \Big\|^{\frac{2-\gamma (n-2)}{2-\gamma n}}_{L^{1}}
\end{equation*}
for any $v \in B^{k,2s,s}_\mathrm{vel,0} (I) $, 
$\gamma \in (0, \frac{2}{n})$ and $\varkappa>0$, 
 where the last bound follows from 
Young inequality  \eqref{eq.Young}   
with $p_1=\frac{2}{\gamma n}$, $p_2 = \frac{2}{2-\gamma n}$. 
Then 
\begin{equation} \label{eq.u.gamma.exp}
 \mu^{-1}C_{{\mathcal M},n}  \int_0^t  \Big\|(1+|v|^{2})^2 e^{1+|v|^2 }\Big\|_{L^1}
d\tau \leq 
\end{equation}
\begin{equation*} 
 \int_0^{t} \Big(\mu c^{(1)}_n  
\Big\| e^{1+|v|^2} \Big\|_{\mathbf{L}^{\frac{n}{n-2}}} +
\frac{c_n}{ \mu ^{\frac{2+\gamma n}{2-\gamma n}}\gamma ^{ \frac{4-n \gamma }{2-n\gamma}}}\
 \Big\| e^{1+|v|^2} \Big\|^{\frac{2-\gamma (n-2)}{2-\gamma n}}_{\mathbf{L}^{1}}\Big) d\tau
\end{equation*}
for any $v \in B^{k,2s,s}_\mathrm{vel,0} (I) $ and 
$\gamma \in (0,\frac{1}{n}]$ with a constant $c_n$ 
independent on $\gamma$ and $v$.

Now, combining \eqref{eq.exp.int.im.not} with \eqref{eq.u.gamma.exp} 
for $\gamma =\frac{2}{4m+n}$ we see that
\begin{equation} \label{eq.exp.int.im.not.gamma}
\frac{m}{\mu \, \ln{(m)}}    \int_0^{t_m} 	\left\|
 e^{ 1+|u_{i_m}|^2  }\right\|^{\frac{m+1}{m}}_{L^{1}   } 
d\tau <
\frac{c_n}{ \mu ^{\frac{2m+n}{2m}}} \Big(\frac{4m+n}{2} \Big)^{\frac{8m+n}{4m}} \int_0^{t_m} 
 \Big\| e^{1+|u_{i_m}|^2} \Big\|^{\frac{2m+1}{2m}}_{\mathbf{L}^{1}}
d\tau 
\end{equation}
for all $m \geq 1/\delta_0>n$. 

On the other hand, by the H\"older inequality, for all $m\geq 1$, 
$$
\left\|
 e^{ 1+|u_{i_m}|^2  }\right\|
_{L^{\frac{2m+1}{2m}} 
([0,t_m], L^{1})   }
\leq \left\|
 e^{ 1+|u_{i_m}|^2  }\right\| _{L^{\frac{m+1}{m}} 
([0,t_m], L^{1})   } T^{\frac{m}{(m+1)(2m+1)}}
$$
and  therefore using \eqref{eq.asymp.1} and \eqref{eq.exp.int.im.not.gamma} 
we arrive at the following:
\begin{equation*}
1\leq \frac{ c_n   \ln{(m)}}{
\mu ^{\frac{n}{2m} } m} \Big(\frac{4m+n}{2} \Big)^{\frac{8m+n}{4m}}
T^{\frac{1}{2(m+1)}} 
\left\| e^{ 1+|u_{i_m}|^2  }\right\| ^{-\frac{1}{2m}}_{L^{\frac{m+1}{m}} 
([0,t_m], L^{1})   } \leq 
\end{equation*}
\begin{equation}\label{eq.asymp.2}
\frac{ c_n  \ln{(m)}}{\mu ^{\frac{n}{2m}}  m^3}
 \Big(\frac{4m+n}{2} \Big)^{\frac{8m+n}{4m}}
T^{\frac{1}{2(m+1)}}  \mbox{ for all } m \geq 1/\delta_0.
\end{equation}
Again, as $\frac{8m+n}{4m}<5/2$ for all $ m \geq 1/\delta_0 >n$, 
passing to the limit with respect to $m\to +\infty$ in \eqref{eq.asymp.2} we obtain 
a contradiction:
$
1\leq  0,
$ 
i.e. the lemma is proved.
\end{proof}

Next, formula \eqref{eq.exp.int.i} and Lemma \ref{l.claim} 
imply that there is a number $\delta$, $0<\delta\leq \delta_0<1/n$, 
satisfying \eqref{eq.Gron.add.im} and such that  
\begin{equation*}
    \left\| e^{ 1+|u_i|^2  } (\cdot,t)\right\|_{L^{1}   }
\leq (A+1)+\int_0^t \!\! \Big(  B 
 \left\| e^{ 1+|u_i|^2  }\right\|_{L^{1}   }
 +  \frac{1}{\mu \, \delta\, \ln{(1/\delta)}} \left\|
 e^{ 1+|u_{i}|^2  }\right\|^{1+\delta}_{L^{1}   } 
\Big) d\tau 
\end{equation*}
for all $i \in {\mathbb N}$ and all $t \in [0,T]$. 
Actually, inequality \eqref{eq.Gron.add.im} provides additional 
bound \eqref{eq.Gron.add} 
for $\gamma_0 = 1+\delta>1$, ${\mathfrak A} = A+1$, 
${\mathfrak B} (t)= B$,  
${\mathfrak C}(t) = \frac{1}{\mu \, \delta \,  \ln{(1/\delta)}}$ and $h=T$ in Perov  
Lemma \ref{l.Perov}. Therefore this lemma 
yields for all $i \in {\mathbb N}$ and all $t \in [0,T]$: 
\begin{equation} \label{eq.Gron.large}
 \left\| e^{ 1+|u_i|^2  } (\cdot,t)\right\|_{L^{1}   } \leq (A+1) \,  \left( 
e^{-BT\delta} -
\frac{ (A+1)^{\delta} }{\mu \, \ln{(1/\delta)}} 
 \int_0^t e^{(\tau-t)B\delta}d \tau 
\right)^{-1/\delta} 
\end{equation} 
with the number $\delta=\delta(f,u_0)$, $0<\delta\leq \delta_0<1/n$, granted by 
Lemma \ref{l.claim}. 

In particular, the sequence $\{  e^{ 1+|u_i|^2  } \}$ is bounded in 
$C(I, L^1)$. Therefore using \eqref{eq.s=r} (in particular,  the compactness 
of the set ${\mathbb T}^n \times I$) we conclude that 
$$
\|e^{ 1+|u_i|^2}  \|_{C(I, L^1)} \geq 
T^{-1} \|e^{ 1+|u_i|^2}  \|_{L^1 (I, L^1)} 
\geq T^{-1} C_{n,T} \|u_i\| ^{2(n+2)} _{L^{n+2}(I,\mathbf{L}^{n+2}) }
$$
i..e.  $\{  u_i  \}$ is bounded in 
$L^{n+2}(I, \mathbf{L}^{n+2})$. 
Finally, the statement of the theorem follows from Lemma \ref{l.closure} 
since the pair $\mathfrak{s}=n+2$,  $\mathfrak{r}=n+2$
satisfies \eqref{eq.s.r}. 
\end{proof}
 
\begin{remark} \label{r.implicit}
Note that the constants $A=A(u_0)$, $B=B(f)$ 
in estimate \eqref{eq.Gron.large} depends rather 
directly on the data $(f,u_0)$ (see \eqref{eq.A} and \eqref{eq.B}) while the number 
$\delta = \delta(f,u_0)$ 
is given somehow implicitly via Lemma \ref{l.claim} and it depends actually on the 
corresponding sequence $\{u_i\}$ from the inverse image of mapping \eqref{eq.NS.map}. 
Thus, for $n\geq 3$, this leaves us basic Energy Estimate 
\eqref{eq.En.Est2imp} as the only one known 
universal a priori estimate for the Navier-Stokes type equations \eqref{eq.NS} with 
non-linearity \eqref{eq.non-linearity} satisfying \eqref{eq.trilinear} even in the 
`local'{} case $a=0$.  Moreover, taking into account the type of the  obtained 
implicit estimate \eqref{eq.Gron.large}, 
questions arise on  the credibility of the numerical simulations involving 
regular/smooth solutions to  equations this type. 
\end{remark}

Next, given a Fr\'{e}chet space $\mathcal{F}$, we denote by $C^\infty (I,{\mathcal F})$ the
space of all infinitely differentiable functions of $t \in I=[0,T]$ with values in 
$\mathcal{F}$.

\begin{corollary}
\label{c.exist.smooth}
Let non-linearity \eqref{eq.non-linearity} satisfy \eqref{eq.trilinear}.
If $n=2$ or $a=0$ then \eqref{eq.NS} induce a homeomorphism 
of   $\mathbf{C}^\infty ({\mathbb T}^n \times I)$ onto the space
   $\mathbf{C}^\infty ({\mathbb T}^n \times I) \times \mathbf{C}^{\infty} $.
\end{corollary}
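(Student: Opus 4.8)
The plan is to obtain the smooth (Fr\'echet) statement as a projective limit of the Banach-space homeomorphisms already established, taken over the scale parameters $(k,s)$. First I would record the topological identifications
\[
\mathbf{C}^\infty ({\mathbb T}^n \times I) = \bigcap_{k,s} B^{k,2s,s}_{\mathrm{vel,a}}(I),\quad
\mathbf{C}^\infty ({\mathbb T}^n \times I) = \bigcap_{k,s} B^{k,2(s-1),s-1}_{\mathrm{for}}(I),\quad
\mathbf{C}^\infty = \bigcap_{k,s} \mathcal{H}^{(a)}_{2s+k},
\]
the intersections running over all $k\in{\mathbb Z}_+$, $s\in{\mathbb N}$ and carrying the projective-limit topology. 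The inclusion ``$\subseteq$'' is immediate, since a field smooth on the compact ${\mathbb T}^n\times I$ lies in every member of the scale with continuous inclusion. For ``$\supseteq$'' I would invoke the Sobolev estimate \eqref{eq.Sob.index}: membership in $C(I,\mathbf{H}^{m})$ for every $m$, together with the control of the time derivatives built into the very definition of $B^{k,2s,s}_{\mathrm{vel,a}}(I)$, forces membership in $C(I,\mathbf{C}^{j})$ for all $j$ after differentiating in $t$, whence $\mathbf{C}^\infty$-regularity; the two generating families of seminorms then define the same topology.

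With these identifications in place, I would fix for each admissible $(k,s)$ the homeomorphism $\mathcal{A}_a$ of Theorem \ref{t.exist.Bohcher} (when $a=0$, $n\geq 3$) or of Proposition \ref{p.exist.Bohcher.n=2} (when $n=2$); note that for $a=0$ the pressure space $B^{k+1,2(s-1),s-1}_{\mathrm{pre,a}}(I)$ reduces to $\{0\}$, so the source is carried by the velocity field alone, matching the statement. Injectivity on the intersection is inherited from any single level. For surjectivity I would argue by uniqueness: a smooth pair $(f,u_0)$ admits at each level a unique preimage, and since a solution lying in a finer space is \emph{a fortiori} a solution in every coarser one, all these preimages coincide with one pair belonging to every space of the scale, hence to $\mathbf{C}^\infty$. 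This is precisely the bootstrap that upgrades a smooth datum to a smooth solution.

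Finally, the bicontinuity of $\mathcal{A}_a$ for the Fr\'echet topologies is a formal consequence: as the limit spaces are metrizable, it suffices to test on sequences, and $u_j\to u$ in $\mathbf{C}^\infty$ means convergence in every $B^{k,2s,s}_{\mathrm{vel,a}}(I)$, so by the level-wise continuity of $\mathcal{A}_a$ and of $\mathcal{A}_a^{-1}$ the images converge in every target space, i.e.\ in the product Fr\'echet topology, and symmetrically for the inverse. I expect the genuinely delicate point to be the topological identification of the first step --- verifying that the intersection of the Bochner--Sobolev scale reproduces $\mathbf{C}^\infty({\mathbb T}^n\times I)$ as a topological vector space and not merely as a set --- whereas everything else is a routine transfer of the Banach-level conclusions, the essential analytic content having already been spent in Theorem \ref{t.exist.Bohcher} and Proposition \ref{p.exist.Bohcher.n=2}. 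The compactness of both ${\mathbb T}^n$ and $I$ is what lets the two topologies coincide with no further growth restrictions.
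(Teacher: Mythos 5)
Your proposal is correct and follows essentially the same route as the paper: identify $\mathbf{C}^\infty(\mathbb{T}^n\times I)$ and $\mathbf{C}^\infty$ with the projective limits of the Bochner--Sobolev scale via the Sobolev embedding, then transfer the level-wise homeomorphisms of Proposition \ref{p.exist.Bohcher.n=2} and Theorem \ref{t.exist.Bohcher} to the intersection. The paper states this in two lines; you merely spell out the gluing of preimages by uniqueness and the bicontinuity in the Fr\'echet topology, which are exactly the steps the paper leaves implicit.
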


\begin{proof}
It follows immediately from
   the Sobolev embedding theorem  that 
$$\mathbf{C}^\infty=\cap_{s=0}^\infty \mathbf{H}^{2s}, \, 
  \mathbf{C}^\infty ({\mathbb T}^n \times I) = C^\infty (I, \mathbf{C}^\infty)
  =   \cap_{s=1}^\infty B^{0,2s,s}_\mathrm{vel,0} (I) = 
\cap_{s=0}^\infty B^{0,2s,s}_\mathrm{for} (I).
$$
Then an application of Proposition \ref{p.exist.Bohcher.n=2} and
 Theorem \ref{t.exist.Bohcher} finishes the proof. 
\end{proof}

\begin{example}  \label{ex.SvPl} 
If we consider $a=0$ and 
non-linearity \eqref{eq.SvPl} from \cite{PlSv03} then 
for all $b \in (0,1)$ and $u \in {\mathcal H}^{(0)}_{2s+k} = \mathbf{H}^{2s+k}$, 
$2s+k >\frac{n}{2}$,  we have 
\begin{equation} \label{eq.NS.zero}
((u \cdot \nabla) u , u)_{\mathbf{L}^2} = - 
(u, (u \cdot \nabla) u )_{\mathbf{L}^2} - ((\mathrm{div } u) u,u) _{\mathbf{L}^2}, 
\end{equation}
$$
(b \, (u \cdot \nabla) u + 
\frac{1}{2} (1-b) \, \nabla |u|^2 + 
\frac{1}{2} (\mathrm{div } u) u, u)_{\mathbf{L}^2} = 
$$
$$
\int_{\mathbb{T}^n}
\Big( - \frac{b}{2} \, (\mathrm{div } u) |u|^2 -\frac{1}{2} (1-b) \, (\mathrm{div } u) |u|^2
+ \frac{1}{2} \, (\mathrm{div } u) |u|^2 \Big) =0.
$$ 
Thus, non-linearity \eqref{eq.SvPl} satisfies \eqref{eq.trilinear} for all $n\geq 2$ in 
this case. Hence it follows from Proposition \ref{p.exist.Bohcher.n=2} and 
Theorem \ref{t.exist.Bohcher}  that  initial problem  \eqref{eq.NS} with $a=0$  
and non-linearity \eqref{eq.SvPl} 
has unique solution $u \in B^{k,2s,s}_\mathrm{vel,0} (I)$ for any 
data $(f,u_0) $ from the space $ B^{k,2(s-1),s-1}_\mathrm{for} (I)
\times \mathbf{H}^{2s+k}$ if $n \geq 3$, $2s+k>n/2+2$, or 
$n=2$, $s\in {\mathbb N}$, $k \in {\mathbb Z}_+$.  
Corollary   \ref{c.exist.smooth}  implies that it 
admits unique smooth solutions for any smooth data.
\end{example}

\begin{example} \label{ex.NS}
If $a=1$ and ${\mathbf D} u =  (u \cdot \nabla) u $ 
then according to Example \ref{ex.NS.intr} relations \eqref{eq.NS} give us  
the Navier-Stokes equations for incompressible fluid.  
In particular, ${\mathcal H}^{(1)}_{2s+k} = V_{2s+k}$ and  
\eqref{eq.NS.zero} implies that ${\mathbf D} u$ satisfies 
\eqref{eq.trilinear} for all $n\geq 2$  in this case, see \cite{Tema79,Tema95}. Since ${\mathbf D} u$ satisfies 
\eqref{eq.trilinear} then Proposition \ref{p.exist.Bohcher.n=2} 
readily implies the well-known result that Navier-Stokes equations \eqref{eq.NS}  
has unique regular (smooth) solution $(u,p)$ for any regular (smooth) data $(f,u_0)$ if 
$n=2$, cf. works \cite{Lady70,Lady03} 
or \cite{Serr59b}, \cite{Tema95} for the periodic setting. 
But for $n\geq 3$ inequalities \eqref{eq.s.r} and \eqref{eq.sigma.rho}
never coincide. For $a=1$ the arguments from the proof of Theorem \ref{t.exist.Bohcher} 
fail, too,    because we have to estimate the pseudo-differential term 
$\mathbf{P} \mathbf{D} u$ instead of the 
\textit{local} term $\mathbf{D} u$, see \eqref{eq.non-lin.term}, and the integral operator 
$\mathbf{P}$ does not admit point-wise estimates. 
\end{example}

\smallskip

\textit{Funding.\,}
The research was supported by the Russian Science Foundation,  grant N 20-11-20117.

\textit{Conflicts of interest/Competing interests.\,} No such matters noticed

\textit{Availability of data and material.\,} Not applicable

\textit{Code availability.\,} Not applicable

 \end{document}